\definecolor{red}{rgb}{1,0,0}
\definecolor{green}{rgb}{0,1,0}
\definecolor{blue}{rgb}{0,0,1}
\definecolor{refkey}{gray}{.625}
\definecolor{labelkey}{gray}{.625}
\newcommand{\abs}[1]{\lvert#1\rvert}
\newcommand{\At}{\operatorname{At}}
\newcommand{\A}{\mathscr{A}}
\newcommand{\B}{\mathcal{B}}
\newcommand{\id}{\operatorname{id}}
\newcommand{\Z}{\mathbb{Z}}
\newcommand{\g}{\mathfrak{g}}
 \def\title@font{\normalsize\bfseries}
 \let\ltx@maketitle\@maketitle
 \def\@maketitle{\bgroup%
 \let\ltx@title\@title%
 \def\@\title{\resizebox{\textwidth}{!}{%
  \mbox{\title@font\ltx@title}%
 }}%
 \ltx@maketitle%
 \egroup}
\newtheorem{theorem}{Theorem}[section]
\theoremstyle{plain}
\newtheorem{Thm}[theorem]{Theorem}
\newtheorem{Cor}[theorem]{Corollary}
\newtheorem*{theorem*}{Theorem}
\newtheorem{Def}[theorem]{Definition}
\newtheorem{def-prop}[theorem]{Definition-Proposition}
\newtheorem{prop}[theorem]{Proposition}
\newtheorem{prop-def}[theorem]{Proposition-Definition}
\newtheorem{Ex}[theorem]{Example}
\newtheorem{Rem}[theorem]{Remark}
\numberwithin{equation}{section}
\theoremstyle{remark}
\begin{document}
\def\C{\mathbb{C}}
\def\CE{\mathrm{CE}}
\def\D{\mathcal{D}}
\def\E{\mathscr{E}}
\def\F{\mathscr{F}}
\def\H{\textbf{H}}
\def\k{\mathbb{K}}
\def\G{\mathcal{G}}
\def\M{\mathcal{M}}
\def\m{\mathfrak{m}}
\def\t{\mathfrak{t}}
\def\O{\mathcal{O}}
\def\P{\mathcal{P}}
\def\U{\mathcal{U}}
\def\V{\mathscr{V}}
\def\L{\mathcal{L}}
\def\X{\mathbb{X}}
\def\Y{\mathbb{Y}}
\def\spec{\text{spec}}
\def\Im{\text{Im}}
\def\coker{\operatorname{coker}}
\def\Ext{\operatorname{Ext}}
\def\End{\operatorname{End}}
\def\pr{\operatorname{pr}}
\def\id{\operatorname{id}}
\def\Der{\operatorname{Der}}
\def\Hom{\operatorname{Hom}}
\def\Jet{\operatorname{Jet}}
\def\Map{\operatorname{Map}}
\def\Mod{\operatorname{Mod}}
\def\sgn{\operatorname{sgn}}
\def\sh{\operatorname{sh}}

\newcommand{\tot}{\mathrm{tot}}

\newcommand{\img}{\mathrm{img}}
\newcommand{\T}{\mathfrak{L}}
\newcommand{\dg}{\mathrm{dg}}

\def\Kap{\operatorname{Kap}}

\newcommand{\LP}{Loday-Pirashvili }

\newcommand{\CATderivationA}{\operatorname{dgDer}_\A}

\newcommand{\CATLeibnizoneA}{\operatorname{Leib}_\infty(\A)}

\newcommand{\CAThtyAmod}{\operatorname{H}(\operatorname{dg}\A)}
\newcommand{\DG}{\mathrm{dg}}
\newcommand{\dCE}{d_{\CE}}
\newcommand{\Omegag}{\Omega_{\g}}

\title{DG Loday-Pirashvili modules over Lie algebras}

\author{Zhuo Chen}{$^1$}\\

\author{Yu Qiao*}

\author{Maosong Xiang}

\author{Tao Zhang}

\thanks{ZC: \emph{Department of Mathematics, Tsinghua University, Beijing, 100084, China}}
\thanks{YQ: {\em School of Mathematics and Statistics, Shaanxi Normal University, Xi'an 710119, Shaanxi, China}}
\thanks{MX: {\em Center for Mathematical Sciences, School of Mathematics and Statistics,  Huazhong University of Science and Technology, Wuhan, 430074, China}}
\thanks{TZ: \emph{College of Mathematics and Information Science, Henan Normal University, Xinxiang, 453002, China}}
\thanks{*: {Corresponding author}}
\thanks{\emph{Emails}: chenzhuo@tsinghua.edu.cn, ~yqiao@snnu.edu.cn,~msxiang@hust.edu.cn,~zhangtao@htu.edu.cn}


\begin{abstract}
A \LP module over a Lie algebra $\g$ is a Lie algebra object $\bigl(G\xrightarrow{X} \g \bigr)$ in the category of linear maps, or equivalently, a $\g$-module $G$ which admits a  $\g$-equivariant linear map $X:G\to \g$. We study dg \LP modules over Lie algebras, which is a generalization of \LP modules in a natural way, and establish several equivalent characterizations of dg \LP modules. To provide a concise characterization, a dg \LP module is a non-negative  and bounded dg $\g$-module $V $ paired with a weak morphism of dg $\g$-modules  $\alpha\colon V\rightsquigarrow \g$.
Such a dg \LP module   resolves  an arbitrarily specified classical \LP module in the sense that it exists and is unique (up to homotopy).
Dg \LP modules can also be characterized through dg derivations.  This perspective allows the calculation of the corresponding twisted Atiyah classes.
By leveraging the Kapranov functor on the dg derivation arising from a dg \LP module $(V,\alpha)$,   a Leibniz$_\infty[1]$  algebra structure can be derived on $\wedge^\bullet \g^\vee\otimes V[1]$. The binary bracket of this structure corresponds to the twisted Atiyah cocycle.
To exemplify these intricate algebraic structures through specific cases, we  utilize this machinery to a particular type of dg \LP modules stemming from Lie algebra pairs.
\end{abstract}

\maketitle

\subjclass{{\em Mathematics Subject Classification} (2020): Primary 16E45; Secondary 16W50, 17A32, 17B55, 18G35 }
\\
\keywords{{\em Keywords}: dg modules over Lie algebras, \LP modules,  twisted Atiyah classes, Kapranov functor, Leibniz$_\infty[1]$-algebras}

\tableofcontents

\section*{Introduction}\paragraph{\bf The motivation}
The inception of this study is rooted in the realm of differential graded (abbreviated as dg) geometry. Recall that a dg manifold is deemed as a manifold endowed with a graded structure, accompanied by a homological vector field. These entities first appeared over two decades ago in the study of topological quantum field theory~\cite{AKSZ}, which is now commonly referred to as the AKSZ (Alexandrov-Kontsevich-Schwarz-Zaboronsky) construction. Building upon this foundational knowledge, Ciocan-Fontanine and Kapranov meticulously laid out a comprehensive framework for defining dg manifolds~\cite{CFK}. Dg manifolds have a rich structure and play an important role in various fields, including but not limited to gauge theory, deformation quantization, Lie theory, higher structures, and most recently, generalized geometry.

In this note we focus on a specific category of objects originating from Lie algebras. A Lie algebra $\g$ is interpreted as a formal pointed dg manifold from the perspective of formal geometry, as noted by~\cite{KS} --- The Lie structure inherent in $\g$ leads to the creation of a dg manifold, conventionally labeled as $\g[1]$. The function algebra associated with $\g[1]$ is $\wedge^\bullet \g^\vee$, and the corresponding homological vector field is the Chevalley-Eilenberg differential. Within this framework, dg vector bundles over $\g[1]$ exhibit a direct correspondence with dg $\g$-modules. For instance, the adjoint representation of $\g$ is linked with the shifted tangent bundle $T[-1] \g[1] \cong \g\times \g[1]$.

In their study~\cite{LP} of the tensor category of linear maps, Loday and Pirashvili demonstrated that any Lie algebra object within this category can be represented by a linear map $X \colon G \to \g$, where $\g$ is a Lie algebra, $G$ is a $\g$-module, and $X$ is a $\g$-equivariant map. This representation is encapsulated in the form of $\bigl(G \xrightarrow{X} \g \bigr)$,  termed as a {\em \LP module}, which is central to our discussion; see Section \ref{Sec:classicalLPmodules} for detailed discussions. These objects are also referred to as \textit{averaging operators} of Lie algebras~\cite{Aguiar} or \textit{augmented Leibniz algebras} in~\cite{ABRW}; further studies on this topic can be found in~\cites{Cao, PG} with respect to associative algebras.

From the aforementioned dg geometry point of view, a \LP module $\bigl(G\xrightarrow{X} \g \bigr)$ can be rephrased as a dg vector bundle $G\times \g[1]$ over $\g[1]$ whose fiber $G$ concentrates in degree $0$. This construction also includes a morphism $G\times \g[1]\xrightarrow{X^{!}} T[-1]\g[1]$, which is a map of dg vector bundles obtained from the map $X \colon G\to \g$. The map $X^!$ is commonly referred to as an anchor map in differential geometry. Therefore, one may treat \LP modules as ``anchored" dg vector bundles with the stipulation that their fibers are concentrated in degree $0$. In order to explore further features of \LP modules from this dg perspective, it is pertinent to consider how we can relax the original constraints on the degrees of vector spaces and broaden the definition of these modules to align more closely with the dg settings and   the context of differential geometry.

  Based on these ideas  and  as a   homotopy replacement,
  we introduce the concept of a dg \LP module as a modification of the traditional \LP module framework. Specifically, a dg \LP module is defined as a dg vector bundle $V\times \mathfrak{g}[1]$ over $\mathfrak{g}[1]$, together with an anchor map represented by a morphism $V\times \mathfrak{g}[1]\to T[-1]\mathfrak{g}[1]$ of dg vector bundles. This formulation can be expressed in terms of Lie algebras and their modules, as highlighted in Definition \ref{Def: het0}. Additionally, we provide alternative characterizations of dg \LP modules. In essence, a dg \LP module (over $\mathfrak{g}$) comprises a dg $\mathfrak{g}$-module $V$ along with a set of linear maps $V^{k} \otimes \wedge^{k} \mathfrak{g} \rightarrow \mathfrak{g}$, which demonstrate compatibility with the relevant $\mathfrak{g}$-module structures, as stated in Proposition \ref{Prop:otherdescriptionsofLP}. Having established a precise definition, there is no necessity to revisit the original concept. Consequently, we avoid further discussion on their dg geometry counterpart in subsequent sections.

\paragraph{\bf Structure and main results of the paper}
The first Section \ref{Sec:gradedLPmod} of the paper serves as a foundation, beginning with the reiteration of fundamental concepts and the examination of the definition of ordinary \LP modules.
A key focus of this section is the introduction of the dg \LP modules.
It is worth mentioning that an essentially surjective functor $\H^0$ is introduced from the category of dg \LP modules to ordinary \LP modules. The central theorem in this section is Theorem \ref{Thm:foundamentalone}, which states that in the case where $V$ serves as a resolution for a $\g$-module $G$, every {\LP module} structure on $G$ can be lifted (in a unique, homotopy-preserving manner) to a dg \LP module structure on $V$. This lifting effectively represents an inverse image of $\H^0$.

In Section \ref{Sec:twistedAtiyahclassesetc}, we approach the structural properties of dg \LP modules from an alternative angle. By   Proposition \ref{prop1}, we establish that a {dg \LP module} structure on $V$ is essentially equivalent to an $\wedge^\bullet \g^\vee \otimes V^\vee[-1]$-valued dg derivation of $\wedge^\bullet \g^\vee$. This connection was initially introduced in a previous collaborative work by the first and third named authors along with Liu \cite{CLX}, where they introduced the concept of twisted Atiyah class associated with dg derivations of general dg algebras (refer to Proposition-Definition \ref{prop:Atiyah via connection}). Subsequently, our focus shifts towards unraveling the corresponding twisted Atiyah classes emerging from dg \LP modules. The outcomes of this exploration are succinctly presented in Proposition \ref{prop:LeibnizgradedLPmodule}. Remarkably, originating from a {dg \LP module} $V$, we are able to derive a (graded) Leibniz algebra $H^\bullet_{\tot} (\g,A\otimes V)$ (the total cohomology linked with the dg $\g$-module $A\otimes V$ for a specific $\g$-algebra $A$). Additionally, a dg $\g$-module $W$ yields a Leibniz module $H^\bullet_{\tot} (\g,A\otimes W)$ over the aforementioned Leibniz algebra. Notably, all structural mappings stem from twisted Atiyah classes.

 Section \ref{Sec:KapranovLeibnizetc} also expands on the material in  our previous work \cite{CLX}, namely a construction of higher algebraic objects, called Kapranov Leibniz$_\infty[1]$-algebras.  One application of this construction, specifically the Kapranov functor, involves the dg derivations originating from dg \LP modules over Lie algebras. The main Theorem \ref{Main Thm} establishes a Leibniz$_\infty[1]$ $\wedge^\bullet \g^\vee$-algebra structure on the $\wedge^\bullet \g^\vee$-module $\wedge^\bullet \g^\vee\otimes V[1]$. This specific object is significant because it facilitates a unique ``\textit{resolution}" of the Leibniz algebra developed by Loday and Pirashvili in \cite{LP} --- For an ordinary \LP module $\bigl(G\xrightarrow{X} \g \bigr)$, where $G$ is resolved as $V=(V^\bullet, d^V)$ with $G=\ker(d_0^V)\subset V^0$,   Theorem \ref{Thm:foundamentalone} derives a dg \LP module $(V, \alpha)$, acting as a dg lift of $(G,X)$. The outcome of Theorem \ref{Main Thm} is a Leibniz$_\infty[1]$ $\wedge^\bullet \g^\vee$-algebra denoted by $\Kap(\delta_\alpha) = (\B,\{\mathcal{R}^{\nabla}_k\}_{k\geqslant 1})$. This algebra is defined on the $\wedge^\bullet \g^\vee$-module $\B= \wedge^\bullet \g^\vee \otimes V[1]$. Notably, the binary bracket of this structure, when applied to $G[1]=\ker(d_0^V)[1]\subset V^0[1]$, precisely aligns with the standard Leibniz algebra structure found in $G$.

 In the last Section \ref{Sec:appLiepair}, we delve into the application of the aforementioned methods and theories to a specific category of dg \LP modules originating from Lie algebra pairs. A Lie algebra pair $(\T, \g)$ is characterized by an inclusion $\g \subset \T$ of Lie algebras. We demonstrate that such a Lie algebra pair naturally results in a canonical dg \LP module structure being imposed upon the two-term cochain complex $V = ( V^{0}\xrightarrow{d^V} V^1)$, with $V^{0}=\T$ and $V^1=\T/\g$. The differential $d^V$ is represented by the projection $\pr_{\T/\g}$. An intriguing discovery emerges as the original Atiyah class of a Lie algebra pair, as defined by Calaque, C\u{a}ld\u{a}raru, and Tu in \cite{CCT}, integrates seamlessly into the framework of the twisted Atiyah class that emerges from the said dg \LP module. This interplay is further elaborated in Remark~\ref{Rem:originalandtwisted}.

\paragraph{\bf Related works}

In the context of maximal gauged supergravities, the concept of \LP modules is closely linked to embedding tensors, as investigated by Nicolai and Samtleben~\cites{NS1, NS2}. Nicolai and Samtleben define an embedding tensor as a linear mapping denoted as $X$, which operates from the space of gauge fields, denoted as $G$, to the Lie algebra of the rigid symmetry group $P$. The group $P$ is representative of the prevalent duality property observed in the realm of string theory and M-theory. It is noteworthy that the Lie algebra $\g$ possesses a representation on the space of gauge fields $G$. Moreover, a significant physical constraint, stemming from the physics background, imposes that the linear map $X \colon G \to \g$ must adhere to a closure constraint, resulting in the mathematical necessity that the image given by $\g_0:= X(G) \subset \g$ forms a Lie subalgebra, with $\theta$ exhibiting $\g_0$-equivariance. Thus, in the terminology of this paper, the map $X \colon G \to \g_0$ constitutes an \LP module. Additionally, Kotov and Strobl~\cite{KS1} demonstrated that the essential data required for an embedding tensor align precisely with those of a Leibniz algebra, from which the associated ${L_\infty}$ algebra can be deduced. Remarkably, Strobl and Wagemann~\cite{SW} have recently explored enhanced Leibniz algebras originating from higher gauge theories, thereby capturing embedding tensors as a distinctive case.

In studies of gauged supergravities~\cites{dWNS,dWS2}, the invariance of the Lagrangian's kinetic terms necessitates a covariant expression for the field strength, thus leading to the concept of tensor hierarchies. The theory of embedding tensor gives rise to tensor hierarchies, which involve the successive introduction of higher gauge transformations. These hierarchies play a crucial role in modeling the dynamics of the system (see ~\cites{dWNS, dWS, dWS2, dWST, dWST2, dWST3}). Furthermore, these tensor hierarchies can be organized into an ${L_\infty}$ algebra, as demonstrated in works by various researchers~\cites{HS, Lavau, Palm, LP2020, LS2021, LS2023}. It is anticipated that future research will benefit from the insights provided by these mathematical physicists, fostering further exploration and enlightenment in this field.

\paragraph{\bf Acknowledgment}
We appreciate the anonymous referee for offering constructive suggestions to enhance the manuscript presentation.

\section{Dg \LP modules}~\label{Sec:gradedLPmod}

Let us fix {convention and notations}.
Throughout this paper, $\k$ denotes a field of characteristic zero and ``graded'' means $\Z$-graded.
In what follows, the notion ``vector space'' means ``$\k$-vector space'', ``linear'' refers to ``$\k$-linear'' and similarly for that of ``bilinear'' or ``multilinear''. All tensor products $\otimes$ without decoration are assumed to be over $\k$, and similarly for that of  symmetric and exterior  products.

The letter $\g$ always denotes a finite dimensional (nongraded) Lie algebra over $\k$. The Lie bracket of $\g$ is written as $[-,-]_\g$.
Given a $\g$-module $E$, the $\g$-action on $E$ is denoted by $\triangleright :~\g\otimes E\to E$. The Chevalley-Eilenberg complex
$\Omegag( E)=\wedge^\bullet \g^\vee\otimes E$
is endowed with the standard differential $d^E_{\CE}$ defined by:
\[
(d^E_{\CE}e)(x_1)=x_1\triangleright e,
\]
for $e\in E$, $x_1\in \g$, and
\begin{align*}
&\quad (d^E_{\CE} \omega) (x_1, \ldots, x_{n+1}) \\
 &= \sum_{i=1}^{n+1} (-1)^{i+1}  x_i \triangleright \omega (x_1, \ldots, \widehat{x_i}, \ldots, x_{n+1})  + \sum_{i <j} (-1)^{i+j} \omega([x_i, x_j], x_1, \ldots, \widehat{x_i}, \ldots, \widehat{x_j}, \ldots, x_{n+1}),
\end{align*}
for $\omega\in \wedge^n \g^\vee\otimes E$ ($n\geqslant  1$) and $x_i\in \g$.
In particular, $\Omegag :=\Omegag( \mathbb{K})=\wedge^\bullet \g^\vee$ together with $\dCE $
is a dg algebra where $\dCE \colon \mathbb{K}\to \g^\vee$ is zero and for $\omega\in \wedge^n \g^\vee\otimes E$ ($n\geqslant  1$), we have
\begin{align*}
 (\dCE  \omega) (x_1, \ldots, x_{n+1}) = \sum_{i <  j } (-1)^{i+j} \omega([x_i, x_j], x_1, \ldots, \widehat{x_i}, \ldots, \widehat{x_j}, \ldots, x_{n+1}).
\end{align*}
The cohomology of the dg algebra $(\Omegag ,\dCE )$ is denoted by $H^\bullet_{\CE}(\g)$.

\subsection{Dg modules over Lie algebras}~\label{Sec:dgrepLiealgebras}
Let $\g$ be a Lie algebra. We consider the objects of $\g$-modules in the category of bounded cochain complexes\footnote{In this note, we only work with bounded cochain complexes. In many cases, we also assume the cochain complex is non-negative.}.
The following definition is standard.

\begin{Def}~\label{defdgrep}
By a \textbf{dg $\g$-module}, or a \textbf{dg representation} of $\g$, we mean a representation of the Lie algebra $\g$ on a bounded cochain complex $V$
\[
  0 \to	V^{\mathrm{bot}}\xrightarrow{d^V_{\mathrm{bot}}}\cdots  \xrightarrow{d^V_{i-1}} V^{i} \xrightarrow{d^V_i}V^{i+1} \xrightarrow{d^V_{i+1}}\cdots \xrightarrow{d^V_{\mathrm{top}-1}} V^{\mathrm{top}}\to 0 .
\]
We require each $V^i$ to be a $\g$-module, and the $\g$-action is compatible with the differential $d^V_i$
\begin{align*}
  d^V_i(x \triangleright v)=x \triangleright d^V_i(v), \qquad\forall x\in \g, v\in V^i.
\end{align*}
\end{Def}

The complex $V$ is said to be \textbf{non-negative}, if it initiates from $V^{\mathrm{bot}=0}$:
\[
 0\to V^0\xrightarrow{d^V_0} V^{1} \xrightarrow{d^V_1}V^{2} \xrightarrow{d^V_2}\cdots \xrightarrow{d^V_{\mathrm{top}-1}} V^{\mathrm{top}}\to 0 .
 \]
For simplicity, we usually denote $d^V_i$ by $d^V$ if there is no risk of confusion. We also use a single letter $V$ to refer to the dg $\g$-module
$V= (V^\bullet, d^V)$ as defined above.

\begin{Ex}
For the standard Chevalley-Eilenberg cochain complex
\[
\mathbb{R}\xrightarrow{d^V_0=0} \g^\vee \xrightarrow{d^V_1= \dCE  }\wedge^2 \g^\vee \xrightarrow{d^V_2= \dCE  } \wedge^3 \g^\vee\xrightarrow{\ \ \ \ }\cdots
\]
associated to a Lie algebra $\g$,
each $V^i=\wedge^i \g^\vee$ is a $\g$-module via the Lie derivation $L_x:~V^i\to V^i $ ($x\in \g$), and the $\g$-action is compatible with $d^V_i$:
\begin{align*}
	d^V_i(L_x(\xi))=L_x (d^V_i(\xi)), \qquad\forall x\in \g, \xi\in \wedge^i\g^\vee.
\end{align*}
\end{Ex}

It is clear that the cohomology $\H(V)=\ker (d^V)/ \img (d^V)$ of the cochain complex $V$ inherits a $\g$-module structure. In particular, the zeroth cohomology $\H^{0}(V)=\ker (d_0^V)/\img (d_{-1}^V)$ is an ordinary  $\g$-module.
\begin{Rem}
Definition \ref{defdgrep} is a particular instance of the notion of representations up to homotopy of Lie algebroids on a cochain complex of vector bundles introduced  in \cites{AC1,AC} by Arias Abad and Crainic.
\end{Rem}

\begin{Def}
A dg $\g$-module $V=(V^\bullet,d^V)$ is called a \textbf{dg lift} of a given $\g$-module $E$, if
\begin{itemize}
\item $V$ is non-negative; and
\item $E$ is isomorphic to $\H^0(V)=\ker (d_0^V)$ as a $\g$-module.
\end{itemize}
We call $(V^\bullet,d^V)$ a \textbf{resolution} of the $\g$-module $E$, if it is a dg lift of $E$ and the cochain complex  $(V^\bullet,d^V)$ is \textbf{acyclic} in all positive degrees, i.e., $\H^i(V)=0$  for all $i \geqslant  1$.
\end{Def}

Each dg $\g$-module $V=(V^\bullet,d^V)$ induces a dg module over the dg algebra $(\Omega_\g, d_{\CE})$:  the underlying graded  $\Omegag $-module is given by
\[
\Omegag(V):=(\oplus_{p\geqslant  0} \wedge^p \g^\vee)\otimes (\oplus_{q} V^q).
\]
Elements in $\wedge^p\g^\vee\otimes V^q$ have degree $p+q$.
The coboundary map $d^V_i\colon V^i\to V^{i+1}$ is extended to a map $\Omegag( V^i)\to \Omegag( V^{i+1})$, by setting
\[
d^V_i(\omega \otimes v)=(-1)^p\omega\otimes d^V_i(v), \quad \omega\in \wedge^p \g^\vee,v\in V^i.
\]
The summation $d^V=\sum_{i}d^V_i$ is also regarded as an $\Omegag $-linear map $\Omegag( V)\to \Omegag( V)$ of degree $(+1)$.
It is easy to check the anti-commutative relation:
\begin{equation}~\label{atc}
d^V \circ \dCE ^V +\dCE ^V \circ d^V=0,
\end{equation}
where the horizontal differential
\[
\dCE ^V \colon \wedge^\bullet \g^\vee\otimes (\oplus_iV^i) \rightarrow \wedge^{\bullet+1} \g^\vee \otimes (\oplus_i V^{i})
\]
is the Chevalley-Eilenberg differential with respect to the $\g$-module $\oplus_i V^i$. Let us denote by
\[
 d_\tot^V=\dCE ^V+d^V\colon  \quad \Omegag( V)\to \Omegag( V)
 \]
the \textbf{total differential}, which satisfies $d^V_\tot \circ d^V_\tot=0$.
So we obtain a dg $\Omegag $-module $(\Omegag(V), d^V_\tot)$, which we call the
\textbf{total Chevalley-Eilenberg complex}. The corresponding cohomology is denoted by $H^\bullet_{\tot}(\g,V)$, which is a  $H^\bullet_{\CE}(\g) $-module.

One notices that $\H^{0}(V)=\ker (d_0^V)/\img (d_{-1}^V)$ is not isomorphic to $H^0_{\tot}(\g,V)$ unless the $\g$-action on $V$ is trivial.
\begin{Def}
Let $V=(V^\bullet,d^V)$ and $W=(W^\bullet,d^W)$ be two dg $\g$-modules.
A \textbf{weak morphism} of dg $\g$-modules from $V$ to $W$ is a degree $0$ and $\Omegag $-linear map $f \colon \Omegag(V) \to \Omegag( W)$ which intertwines the two relevant total differentials, i.e.,
\begin{equation}\label{mor-dg-general}
f \circ d^V_\tot  =d^W_\tot \circ f \colon \Omegag(V) \to \Omegag(W).
\end{equation}
The composition of two weak morphisms is defined in an obvious way.
\end{Def}

In the rest of this paper, a weak morphism as above is also denoted by $f\colon V \rightsquigarrow W$, though it should \emph{not} be understood as a linear map from $V$ to $W$. Indeed, the honest map is $f \colon \Omegag( V)\to \Omegag( W)$. Since $f$ is $\Omegag $-linear, it is generated by a family of  $\k$-linear maps $f_k \colon V^{k }\to \oplus_{l+s= k} \wedge^{l}\g^\vee \otimes W^{s}$. The index $k$ ranges from the lowest degree `$\mathrm{bot}$' to the highest degree `$\mathrm{top}$' of $V$.
Moreover, we can write $f_k$ as a summation of its summands:
\begin{equation}\label{Eqt:tempfk}
f_k=\sum_{0\leqslant  l \leqslant  \dim\g }f_k^l, \quad \mbox{where}\quad f_k^l \colon V^k\to \wedge^l\g^\vee \otimes W^{k-l}.
\end{equation}
A weak morphism $f\colon V \rightsquigarrow W$ is said to be  \textbf{strict} if it is generated by a family of morphisms of $\g$-modules, that is,  $\k$-linear maps $f_k \colon V^{k }\to W^{k}$ that are compatible with the $\g$-module structures on $V^k$ and $W^k$.

The map $f_k \colon V^{k }\to \oplus_{l+s= k} \wedge^{l}\g^\vee \otimes W^{s}$ as in Equation \eqref{Eqt:tempfk} is naturally extended to an $\Omegag $-linear map $f_k \colon \Omegag(V) \to \Omegag(W)$ of degree $0$ by defining
\[
f_k(\omega \otimes v^k) = \omega \wedge f_k(v^k),\quad \omega \in \wedge^\bullet \g^\vee, v^k\in V^k,
\]
and being trivial on other kinds of arguments:
\[
f_k(\omega\otimes v^j)=0, \quad j\neq k .
\]
Condition \eqref{mor-dg-general} now reads
\begin{equation}~\label{mor-dg}
f_{k+1}\circ d^V_k+f_k\circ \dCE ^V=d^W_\tot \circ f_k \colon V^k\to \oplus_{l+s=k+1}\wedge^l \g^\vee\otimes W^s,
\end{equation}
for each index $k$.

In particular, by taking $k=-1$ and $k=0$ respectively, Equation \eqref{mor-dg} yields the following relations:
\begin{eqnarray}
f_0^0\circ d^V_{-1}&=&d^W_{-1}\circ  f_{-1}^0,\quad \mbox{as a map}\quad V^{-1}\to W^0,~\label{mor-dg-0-0a}\\
f_{-1}^0\circ d^V_\CE+f_0^1\circ d^V_{-1}&=&d^W_\CE\circ f_{-1}^0+ d^W_{-2}\circ f^1_{-1}, \quad \mbox{as a map}\quad V^{-1}\to \g^\vee \otimes W^{-1},~\label{mor-dg-0-1b}\\
f_1^0\circ d^V_0&=&d^W_0\circ  f_0^0,\quad \mbox{as a map}\quad V^{0}\to W^1,~\label{mor-dg-0-0}\\
f_0^0\circ d^V_\CE+f_1^1\circ d^V_0&=&d^W_\CE\circ f_0^0+ d^W_{-1}\circ f_0^{1}, \quad \mbox{as a map}\quad V^0\to \g^\vee \otimes W^0.~\label{mor-dg-0-1}
\end{eqnarray}

Consider the $\g$-modules $\H^0(V)=\ker (d^V_0)/\img  (d^V_{-1})$ and
$\H^0(W)=\ker (d^W_0)/\img  (d^W_{-1})$.
By Equations \eqref{mor-dg-0-0a} and \eqref{mor-dg-0-0}, $ f_0^0$ induces a map from $\H^0(V)$ to $\H^0(W)$, and we denote it by $\H^0(f)$. From Equations \eqref{mor-dg-0-1b} and \eqref{mor-dg-0-1}, we see that $\H^0(f)$ is a morphism of $\g$-modules.

These facts boil down to a functor $\H^0$ from the category of dg $\g$-modules to the category of $\g$-modules: a dg $\g$-module $V$ is sent to $\H^0(V)$,
while a weak morphism of dg $\g$-modules $f \colon V\rightsquigarrow W$ is sent to $\H^0(f) \colon \H^0(V)\to \H^0(W)$.

\begin{Def}
Let $V$ and $W$ be dg lifts of the $\g$-modules $\H^0(V)$ and $\H^0(W)$, respectively. Let $\varphi \colon \H^0(V)\to \H^0(W)$ be a map of $\g$-modules. A \textbf{dg lift} of $\varphi $
is a weak morphism of dg $\g$-modules $f\colon V \rightsquigarrow  W$ such that $\H^0(f)=\varphi $.
\end{Def}

\begin{Def}~\label{Def:homotopy}
Let $f$ and $f' \colon V \rightsquigarrow W$ be weak morphisms of dg $\g$-modules. We say that $f$ and $f'$ are \textbf{homotopic}, if there exists a \textbf{homotopy} between $f$ and $f'$, i.e., a degree $(-1)$ and $\Omegag $-linear map $h\colon \Omegag( V)\to \Omegag( W)$ such that
\[
f^\prime-f =   d^W_{\tot} \circ h + h \circ d^V_{\tot} \colon  \Omegag(V)\to \Omegag(W).
\]
\end{Def}	

We will write $[f]$ for the \textbf{homotopy class} of the weak morphism  $f\colon V\rightsquigarrow W$. It is evident that two homotopic weak morphisms are sent by $\H^0$ to the same morphism of $\g$-modules. So it is natural to consider the reverse problem, i.e., the {existence and uniqueness of dg lifts of ordinary Lie algebra module morphisms}. The following theorem gives a sufficient condition, whose proof is given in the appendix.
\begin{Thm}~\label{Thm:basicdglift}
Let $V$ and $W$ be two non-negative dg $\g$-modules. Assume that $V$ and $W$ are dg lifts of $\g$-modules $E$ and $F$, respectively. If $V$ is a resolution of $E$ (i.e., $V$ is acyclic), then to every $\g$-module morphism $\varphi\colon E \to F$, there exists a weak morphism $f \colon V \rightsquigarrow W$ of dg $\g$-modules which is a dg lift of $\varphi$. Moreover, any two dg lifts of $\varphi$ are homotopic, i.e., the homotopy class $[f]$ is unique.
\end{Thm}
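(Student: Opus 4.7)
The plan is to construct the weak morphism $f$, and for uniqueness its chain homotopy, by induction on the cohomological degree in $V$, exactly as in the classical comparison theorem for projective resolutions: the acyclicity of $V$ in positive degrees provides the splittings needed at each step, and the $\Omegag $-linearity of weak morphisms lets the induction run on the $V$-component of $\Omegag (V)$.

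For existence, I would first choose $f_0 = f_0^0 \colon V^0 \to W^0$ to be any $\k$-linear extension of $\varphi\colon E \hookrightarrow W^0$, where $E = \ker d^V_0$ and $F = \ker d^W_0$. Assuming $f_0, \ldots, f_k$ have been constructed so that \eqref{mor-dg-general} holds on $V^{0}, \ldots, V^{k-1}$, the compatibility at stage $k$ rewrites as
\[
f_{k+1}(d^V v) \;=\; d^W_\tot f_k(v) \;-\; f_k(\dCE v), \qquad v \in V^k,
\]
which prescribes $f_{k+1}$ on $\img d^V_k \subset V^{k+1}$; I then extend $f_{k+1}$ to all of $V^{k+1}$ by picking any $\k$-linear complement of $\img d^V_k$ and setting $f_{k+1}$ to vanish on it. The crucial verification is that the right hand side vanishes on $\ker d^V_k$: at $k = 0$ this reduces, via $d^W_0 \varphi(v) = 0$ for $v \in E$ and $f_0^0\vert_E = \varphi$, to the $\g$-equivariance of $\varphi$; for $k \geqslant 1$ the acyclicity of $V$ gives $\ker d^V_k = \img d^V_{k-1}$, and substituting $v = d^V u$, applying the inductive hypothesis (to rewrite $f_k \circ d^V$ as $d^W_\tot f_{k-1} - f_{k-1}\circ \dCE$), and invoking $(d^W_\tot)^2 = 0$ together with the anti-commutation \eqref{atc} makes the expression collapse to zero.

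For uniqueness up to homotopy, I would set $g = f' - f$ for two dg lifts of $\varphi$; then $g$ is itself a weak morphism lifting the zero map $0\colon E \to F$, so $g_0^0\vert_E = 0$. Since $W$ is non-negative, $h_0$ must vanish, and the entirely analogous recursion
\[
h_{k+1}(d^V v) \;=\; g(v) \;-\; d^W_\tot h_k(v) \;-\; h(\dCE v), \qquad v \in V^k,
\]
followed by an arbitrary $\k$-linear extension off of $\img d^V_k$, produces the required homotopy. Well-definedness on $\ker d^V_k$ is handled by the same two cases: $g_0^0\vert_E = 0$ at $k = 0$, and acyclicity combined with the inductive hypothesis at $k \geqslant 1$.

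The main technical obstacle is careful Koszul-sign bookkeeping. Since $h$ is $\Omegag $-linear of degree $-1$, its action on $\xi \otimes v$ with $\xi \in \g^\vee$ carries the sign $h(\xi \otimes v) = (-1)^{|\xi|} \xi \wedge h(v)$, and $d^W_\tot$ is a degree $+1$ derivation of the $\Omegag $-module structure on $\Omegag (W)$. Without these signs the well-definedness identity at stage $k$ acquires a spurious factor of $2$ rather than vanishing; with them the cancellation is a purely formal consequence of $(d^W_\tot)^2 = 0$ and \eqref{atc}, and the rest of the argument is essentially mechanical.
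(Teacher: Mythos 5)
Your proposal is correct and follows essentially the same route as the paper's proof: an arbitrary extension of $\varphi$ at degree $0$, then an induction in which acyclicity of $V$ (via $\ker d^V_k = \img d^V_{k-1}$) lets you define $f_{k+1}$ on $\img d^V_k$ and extend arbitrarily, with the same reduction of uniqueness to showing that a lift of the zero morphism is null-homotopic by the analogous recursion for $h_{k+1}$. The sign conventions you flag are exactly those the paper builds into the extensions of $d^V$ and $h_k$ to $\Omegag(V)$, so no new issue arises there.
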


\subsection{{\LP modules}}~\label{Sec:classicalLPmodules}
Our motivation comes from what we call \LP modules introduced by Loday and Pirashvili in ~\cite{LP}. Roughly speaking, they are Lie algebras of linear maps. Let us first recall the definition of the category $\mathcal{LM}$ of linear maps.
\begin{Def}\cite{LP}~
The objects in the category $\mathcal{LM}$ are $\k$-linear maps  $(V \overset{f}{\rightarrow} W) $, where $V$ and $W$ are (ordinary ungraded) $\k$-vector spaces. The morphisms  between objects in $\mathcal{LM}$ are pairs of $\k$-linear maps $\phi:= (\phi_1, \phi_0)$ such that the following diagram commutes
 \[
 \begin{tikzcd}
  V \arrow[r, "\phi_1"]    \arrow[d, "f"] & V^\prime  \arrow[d, "f^\prime"] \\
        W  \arrow[r, "\phi_0"]  &   W^\prime.
\end{tikzcd}
\]
Given two morphisms $\psi := ( \psi_1 , \psi_0)$ and $\phi := ( \phi_1 , \phi_0)$ in $\mathcal{LM}$, their composition $\phi \circ \psi$ is defined in the common way:
\[
\phi \circ \psi = ( \phi_1 , \phi_0 ) \circ ( \psi_1 , \psi_0 ) := ( \phi_1 \circ \psi_1, \phi_0 \circ \psi_0 ).
\]
\end{Def}

The category $\mathcal{LM}$ is in fact a tensor category\footnote{Besides the tensor product, one can define an   inner hom, equipping the category $\mathcal{LM}$ with a structure of a $\k$-linear closed symmetric monoidal category \cite{LP}.} where the tensor product of two objects is given by
\[
\bigl(V \xrightarrow{f}  W\bigr) \otimes\bigl(V^{\prime} \xrightarrow{f^{\prime}}  W^{\prime}\bigr) := \bigl(V \otimes W^{\prime} \oplus W \otimes V^{\prime} \xrightarrow{f \otimes 1_{W^{\prime}}+1_{W} \otimes f^{\prime}}  W \otimes W^{\prime}\bigr).
\]
We also have the standard \textit{interchange map}: for two linear maps $ (V \xrightarrow{f}  W )$ and $ (V^{\prime} \xrightarrow{f^{\prime}}  W^{\prime} )$,
a morphism
\[
\tau=\tau_{f,f'} \colon \bigl(V \xrightarrow{f}  W\bigr) \otimes\bigl(V^{\prime} \xrightarrow{f^{\prime}}  W^{\prime}\bigr) \to  \bigl(V^{\prime} \xrightarrow{f^{\prime}}  W^{\prime}\bigr)\otimes \bigl(V \xrightarrow{f}  W\bigr)
\]
is given by $\tau_1 \colon v \otimes w^{\prime} \oplus w \otimes v^{\prime}\mapsto  w^{\prime}\otimes v \oplus   v^{\prime}\otimes w$ followed by $\tau_0\colon w \otimes w^{\prime}  \mapsto  w^{\prime}\otimes   w$.

\begin{Def}\cite{LP}
A Loday-Pirashvili module is a Lie algebra object in the category $\mathcal{LM}$. In other words, 	
a Lie algebra in  $\mathcal{LM}$ is an object $\bigl(G\xrightarrow{X} \g \bigr)$ equipped with a morphism
\[
\mu \colon \bigl(G\xrightarrow{X} \g \bigr)\otimes\bigl(G\xrightarrow{X} \g \bigr)\longrightarrow\bigl(G\xrightarrow{X} \g \bigr)
\]
satisfying
\begin{itemize}
\item[(i)]  $\mu \circ \tau=-\mu$  and
\item[(ii)]   $\mu(1 \otimes \mu)-\mu(\mu \otimes 1)+\mu(\mu \otimes 1)(1 \otimes \tau)=0$.
\end{itemize}
\end{Def}
Equivalently, a Loday-Pirashvili module consists of a Lie algebra $\g$, a $\g$-module $G$, and a linear map $X\colon G \to \g$ which satisfies the $\g$-equivariant property (see \cite{LP}):
\[
X(x  \triangleright g) = [x ,X(g)]_{\g}, \qquad \forall x  \in \g, g \in G .
\]
In the sequel to this note, a \LP module as above will be denoted by $\bigl(G\xrightarrow{X} \g \bigr)$. Or, when the Lie algebra $\g$ is fixed, we simply refer to the pair $(G,X)$, or just $G$, as a \textbf{\LP module} (over $\g$). In other words, a \LP module structure is just a $\g$-module equipped with a morphism $X\colon G\to \g$ of $\g$-modules, where $\g$ is naturally treated as a $\g$-module by the adjoint action.

A morphism of {\LP modules} from $(G,X)$ to $(H,Y)$ is a morphism $\varphi\colon G\to H$ of $\g$-modules such that the following triangle commutes:
\[
\begin{tikzcd}
	{G} \ar{rr}{\varphi} \ar{dr}[left]{X} & &  {H} \ar{dl}{Y} \\
	&   \g. &
\end{tikzcd}
\]
The category of {\LP modules} over $\g$, denoted by $\mathcal{LP}(\g)$, consists of {\LP modules} and their morphisms as explained above.

\subsection{{Dg \LP modules}}

From now on, we only consider \textit{non-negative} dg $\g$-modules  $V=(V^\bullet, d^V)$.
In particular, one has the 1-term dg  $\g$-module $\g=\g[0]$, i.e., the trivial cochain complex which concentrates in degree $0$, and endowed with the standard adjoint action of $\g$ on $\g$. The corresponding (total) complex is denoted by $(\Omegag(\g), \dCE^{\g })$ (which has trivial $d^{\g }$ differential).

We are in a position to give the key notion of dg \LP modules, which is a generalization of \LP modules.
\begin{Def}~\label{Def: het0}
Let $\g$ be a Lie algebra.
A \textbf{dg \LP module} over $\g$ is a \textit{non-negative} and bounded dg $\g$-module $V=(V^\bullet, d^V)$ together with a weak morphism of dg $\g$-modules  $\alpha \colon V\rightsquigarrow \g$.
In other words, $\alpha$ is a degree $0$ morphism of $\Omegag $-modules
$\Omegag( V )\to  \Omegag( \g )$ which intertwines the relevant dg structures, i.e.,
\begin{equation}~\label{0 equation of f}
\alpha \circ d^V_\tot  =\dCE ^{\g }\circ \alpha.
\end{equation}
Such a dg \LP module is denoted by a pair $(V, \alpha )$.
\end{Def}
As a degree $0$ and $\Omegag $-linear map, each dg Loday-Pirashvili module $\alpha \colon \Omegag( V )\to  \Omegag( \g )$ is generated by a family of linear maps $\alpha_k:~V^{k }\to \wedge^{k}\g^\vee \otimes \g$. The index $k$ ranges from  $0$ to the top degree $\mathrm{top}(V)$ of $V$.  Clearly, $\alpha_k$ is trivial unless $0 \leqslant k \leqslant  \dim\g$.
In the sequel, let us denote
       \begin{equation}\label{Eqt:u}
        u=\mathrm{min}\{\mathrm{top}(V),\dim\g\},
        \end{equation}
and hence for $\alpha_k$ we only need to consider those with the index $0 \leqslant    k\leqslant  u$. Out of this range $\alpha_k$ is set to be zero by default.
Each $\alpha_k$ can be also considered as an $\Omegag $-linear map $\alpha_k:\Omegag( V)\to \Omegag( \g)$ of degree $0$ by setting
\[
		\alpha_k(\omega \otimes v^k)=\omega \wedge \alpha_k(v^k)~\mbox{and } \alpha_k(\omega \otimes v^j)=0~\mbox{for }j\neq k.
\]
In doing so,  condition \eqref{0 equation of f} can be reformulated in terms of  $ \alpha_k $ as follows:
\begin{equation}~\label{first equation of f}
\alpha_{k+1}\circ d^V_k+ \alpha_k\circ \dCE ^V=\dCE ^{\g }\circ \alpha_k,\quad\mbox{as a map }V^{k}\to \wedge^{k+1}\g^\vee\otimes \g,
\end{equation}
for all $0\leqslant  k\leqslant  u$.
In particular, if $V$ is concentrated in degree $0$, i.e., $V=V^0$ and $d^V=0$, then a dg \LP module $(V,\alpha)$ reduces to an ordinary \LP module $(V^0,\alpha)$.
\begin{Def}	
A \textbf{dg \LP class} is a pair $(V,[\alpha])$, where $ (V, \alpha)$ is a dg \LP module over $\g$ and $[\alpha]$ is the homotopy class of $\alpha \colon V\rightsquigarrow \g$.
\end{Def}
Two dg \LP  classes $(V,[\alpha])$ and $(V,[\alpha'])$ coincide if $\alpha$ and $\alpha'$ are homotopic, i.e., there exists a family of linear maps
\[
h=\{h_k \colon V^k\to \wedge^{k-1}\g^\vee\otimes \g\}_{1\leqslant  k\leqslant  u+1}
\]
such that for all $0\leqslant  k\leqslant  u$,  one has
\begin{equation}~\label{Eqt:alphaalphaprimehomotopic}
\alpha_k'-\alpha_k=\dCE ^{\g }\circ h_k+h_k\circ \dCE ^V+h_{k+1}\circ d^V_k,~\mbox{as a map }~V^k\to \wedge^k\g^\vee\otimes \g.
\end{equation}

We now define morphisms between dg \LP modules.
\begin{Def}~\label{Def:morphismweakLPmodules}
Let $(V, \alpha )$ and $(W, \beta)$ be two {dg \LP modules}. A morphism $ f $ from $(V, \alpha)$ to $(W, \beta)$ is a weak morphism $f\colon V \rightsquigarrow W$ of dg $\g$-modules such that $\beta \circ f= \alpha \colon V \rightsquigarrow \g$.

Furthermore, a morphism $[f]\colon (V,[\alpha])\to (W,[\beta])$ is the homotopy class of a weak morphism $f\colon V\rightsquigarrow  W $ of dg $\g$-modules  such that the triangle
\[
\begin{tikzcd}
		V  \arrow[rr, squiggly, "f"] \arrow[dr, squiggly, "\alpha"] & &  W  \ar[dl, squiggly, "\beta"] \\
		&  \g &
\end{tikzcd}
\]
is commutative \emph{up to homotopy} in the category of dg $\g$-modules.
\end{Def}
It follows from direct verification that the collection of {dg \LP modules} over $\g$ together with their morphisms forms a category which we denote by $\mathcal{WLP}(\g)$.
By $[\mathcal{WLP}](\g)$ we denote the category of dg \LP classes over $\g$ together with their morphisms.
Many statements about dg \LP modules in the sequel are indeed homotopy invariant, and thus holds at the level of dg \LP classes.

\subsection{Alternative descriptions of dg \LP modules}
Let $(V, \alpha )$ be a {dg \LP module} over $\g$.
One can treat each summand $\alpha_k \colon V^{k } \to \wedge^{k}\g^\vee \otimes \g$ of $\alpha$ as a linear map $V^{k } \otimes \wedge^{k} \g  \rightarrow \g, 0\leqslant  k\leqslant  u$, where $u$ is defined by \eqref{Eqt:u}.
The condition \eqref{first equation of f} for $\alpha$ being a weak morphism of dg $\g$-modules can be reformulated from a different perspective.

\begin{prop}~\label{Prop:otherdescriptionsofLP}
Given a family of linear maps $\{\alpha_k: V^{k } \otimes \wedge^{k} \g  \rightarrow \g\}_{0\leqslant  k\leqslant  u}$, the corresponding pair $(V, \alpha )$ with
$\alpha\colon \Omegag( V )\to  \Omegag( \g )$ defines a {dg \LP module} if and only if the following equations hold for $0\leqslant  k\leqslant  u$:
\begin{eqnarray}~\label{equation of f}
\notag &&\alpha_{k+1}\big(d^{V}_kv \mid x_{1}, \cdots, x_{k+1}\big)\\
\notag &=&\sum_{i=1}^{k+1}(-1)^{i+1}\big(\left[x_{i}, \alpha_{k} (v \mid x_{1}, \cdots, \widehat{x_{i}}, \cdots, x_{k+1} )\right]-\alpha_{k} (x_{i} \triangleright v \mid x_{1},
    \cdots, \widehat{x_{i}},\cdots, x_{k+1} )\big)\\
&&+\sum_{i<j}(-1)^{i+j} \alpha_{k}\bigl(v \mid \left[x_{i}, x_{j}\right], x_{1}, \cdots, \widehat{x_{i}}, \cdots, \widehat{x_{j}}, \cdots, x_{k+1}\bigr),\\\notag
&&
\end{eqnarray}
for all $x_i \in \g$ and $v \in V^{k }$.
\end{prop}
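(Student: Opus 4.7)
The plan is to observe that Proposition \ref{Prop:otherdescriptionsofLP} is exactly the componentwise unpacking of Equation \eqref{first equation of f}, which already characterizes a dg \LP module structure by Definition \ref{Def: het0}. So the proof reduces to evaluating both sides of \eqref{first equation of f} on a generic element $v \in V^k$ together with $(k+1)$ arguments $x_1, \ldots, x_{k+1} \in \g$, and matching the resulting expressions with the right-hand side of \eqref{equation of f}.

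Concretely, I would proceed as follows. First, I would recall that by the $\Omegag$-linear extension rule $\alpha_k(\omega \otimes v^k) = \omega \wedge \alpha_k(v^k)$, each term in \eqref{first equation of f} lies in $\wedge^{k+1}\g^\vee \otimes \g$, so it suffices to test them as multilinear maps on $(x_1,\ldots,x_{k+1})$. Next, I would compute each of the three terms separately:
\begin{itemize}
\item The term $\alpha_{k+1} \circ d^V_k$ applied to $v$ and then evaluated on $x_1,\ldots,x_{k+1}$ gives directly the left-hand side $\alpha_{k+1}(d^V_k v \mid x_1,\ldots,x_{k+1})$.
\item For $\alpha_k \circ \dCE^V$, I would use that $\dCE^V v \in \g^\vee \otimes V^k$ equals $\sum_i \xi^i \otimes (e_i \triangleright v)$ in a basis $\{e_i\}$ of $\g$ with dual basis $\{\xi^i\}$; applying $\alpha_k$ and pairing with $x_1,\ldots,x_{k+1}$ yields $\sum_{i=1}^{k+1}(-1)^{i+1}\alpha_k(x_i \triangleright v \mid x_1,\ldots,\widehat{x_i},\ldots,x_{k+1})$.
\item For $\dCE^\g \circ \alpha_k$, applying the explicit Chevalley--Eilenberg formula for the adjoint $\g$-module to $\alpha_k(v)$ yields the bracket terms $\sum_i (-1)^{i+1}[x_i, \alpha_k(v \mid \cdots)]$ plus the $[x_i,x_j]$ terms $\sum_{i<j}(-1)^{i+j}\alpha_k(v \mid [x_i,x_j],\ldots)$.
\end{itemize}

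Once these three evaluations are carried out, transferring the $\alpha_k \circ \dCE^V$ term to the right-hand side of \eqref{first equation of f} produces precisely \eqref{equation of f}, with the correct signs on the bracket-minus-action combination $[x_i,\alpha_k(\cdots)] - \alpha_k(x_i \triangleright v \mid \cdots)$. The converse direction is automatic: reading the same computation backwards shows that \eqref{equation of f} holding for every $v \in V^k$ and every tuple $(x_1,\ldots,x_{k+1})$ forces the identity \eqref{first equation of f} between $\Omegag$-linear maps $V^k \to \wedge^{k+1}\g^\vee \otimes \g$.

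I do not anticipate a genuine obstacle here; the proof is purely a bookkeeping exercise. The only mild pitfall is sign tracking, in particular making sure the sign $(-1)^{i+1}$ coming from $\iota_{x_i}$ on a form of degree $k$ aligns with the sign produced by antisymmetrizing $\xi^i \wedge \alpha_k(e_i \triangleright v)$ against $x_1,\ldots,x_{k+1}$. Once these signs are verified once, the remaining verification is routine and can be stated as a single display of identifications between the three contributions above and the three sums on the right-hand side of \eqref{equation of f}.
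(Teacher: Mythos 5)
Your proposal is correct and is exactly the argument the paper has in mind: the paper omits the proof of Proposition \ref{Prop:otherdescriptionsofLP}, stating only that it is "a direct verification using the definitions of $d^V_k$, $\dCE^V$, and $\dCE^{\g}$," and your term-by-term evaluation of Equation \eqref{first equation of f} on $v$ and $x_1,\ldots,x_{k+1}$ is precisely that verification, with the signs handled consistently with the paper's Chevalley--Eilenberg conventions.
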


The proof is omitted here because it is a direct verification using the definitions of $d^V_k$, $\dCE ^V$, and $\dCE ^{\g }$. From now on, a {dg \LP module} $(V, \alpha)$ could be alternatively denoted by
\[
\alpha=\{\alpha_k\colon V^{k } \to  \wedge^{k} \g^\vee  \otimes \g \}_{0\leqslant  k\leqslant  u},
\]
or
\[
\{ \alpha_k \colon V^{k} \otimes \wedge^{k} \g  \rightarrow \g \}_{0\leqslant  k\leqslant  u},
\]
or simply $ \alpha =\{\alpha_k\} $.
For the case that $k=0$ and $k=1$, Equation~\eqref{equation of f} is now given as follows:
\begin{compactenum}
\item The first equation reads
	\[
	[x,\alpha_0(v)]_\g-\alpha_0(x \triangleright v) = \alpha_{1}(d^V_0 v  \mid x),\;\;\forall x \in \g, v \in V^{0}.
\]
So the component $\alpha_1$ characterizes the failure of $\alpha_0: V^{0} \to \g$ being a \LP module in the sense of \cite{LP}.
\item The second equation reads
\begin{align*}
	\alpha_{2}(d^V_1 v  \mid x_{1}, x_{2}) &= \alpha_{1}\bigl(x_{2}\triangleright v \mid x_{1}\bigr)-\alpha_{1}\bigl(x_{1} \triangleright v \mid x_{2}\bigr)-\alpha_1(v \mid
      [x_1,x_2]) \notag \\
	&\quad +[x_{1}, \alpha_{1}\bigl(v \mid x_{2}\bigr) ]-\left[x_{2}, \alpha_{1}\bigl(v \mid x_{1}\bigr)\right],
\end{align*}
for all $x_1,x_2 \in \g, v \in V^1$. Thus, the component $\alpha_2$ characterizes the failure of $\alpha_1 \colon V^1 \to \g^\vee \otimes \g$ being a morphism of $\g$-modules.
\end{compactenum}

\begin{Ex}~\label{Ex: pairing}
Let $\g$ be a Lie algebra equipped with an invariant pairing $\langle -,- \rangle$. Consider the vector space $V=V^1:=\g  \otimes \g$ concentrated in degree $(+1)$ and
endowed with the obvious adjoint representation of $\g$. The invariant pairing induces an equivariant linear map
\[
  \g  \to \g ^\vee,\quad x \mapsto x^\sharp := \langle x,-\rangle.
\]
It also induces a {dg \LP module} by setting
\[
  \alpha=\{\alpha_1 \colon \g  \otimes \g  \to \g^\vee \otimes \g\} ,\qquad \alpha_1(x \otimes y) = x^\sharp \otimes y ,\;\;
  \forall x , y \in \g.
\]
\end{Ex}

\begin{prop}~\label{Prop:homotopyinh}
Two dg \LP modules $(V, \alpha)$ and $(V, \alpha')$ are homotopic if and only if there exists a family of maps
\[
 h=\{h_k\colon V^k. \otimes \wedge^{k-1}\g \to   \g\}_{1\leqslant  k\leqslant  u+1}
 \]
such that the following equality holds for all $v \in V^k$ and $x_1,\cdots, x_k\in \g$:
\begin{eqnarray*}
&&(\alpha_k'-\alpha_k)(v \mid x_1,\cdots,x_k)\\
&=& \sum_{i=1}^{k} (-1)^{i+1}  [ x_i, h_k (v\mid x_1, \ldots, \widehat{x_i}, \ldots, x_{k})]  + \sum_{i <  j} (-1)^{i+j} h_k(v\mid [x_i, x_j], x_1, \ldots,
     \widehat{x_i}, \ldots, \widehat{x_j}, \ldots, x_{k})\\
& &\qquad + \sum_{i=1}^{k} (-1)^{i}  h_k(x_i \triangleright v\mid x_1, \ldots, \widehat{x_i}, \ldots, x_{k})   +h_{k+1}(  d^V_kv\mid x_1,\cdots,x_k).
\end{eqnarray*}
\end{prop}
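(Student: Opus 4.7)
My plan is to recognize Proposition~\ref{Prop:homotopyinh} as a coordinate-level rewriting of the abstract homotopy identity \eqref{Eqt:alphaalphaprimehomotopic}, evaluated on a generator $v \in V^k$ against test arguments $x_1,\dots,x_k \in \g$. Specializing Definition~\ref{Def:homotopy} to $W = \g$ concentrated in degree $0$, a homotopy between $\alpha$ and $\alpha'$ is an $\Omegag$-linear degree $(-1)$ map $h\colon \Omegag(V) \to \Omegag(\g)$, and such a map is recorded precisely by a family $\{h_k\colon V^k \to \wedge^{k-1}\g^\vee \otimes \g\}$. Using $d^V_\tot = d^V + \dCE^V$ and $d^\g_\tot = \dCE^\g$, the identity \eqref{Eqt:alphaalphaprimehomotopic} becomes, componentwise on $V^k$,
\[
\alpha_k' - \alpha_k \;=\; \dCE^{\g}\!\circ h_k \;+\; h_k\!\circ\dCE^V \;+\; h_{k+1}\!\circ d^V_k.
\]
So the task reduces to evaluating each of the three operators on $(v\mid x_1,\dots,x_k)$ and matching the result with the right-hand side of the displayed equation in the statement.

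First, I would expand $\dCE^{\g}\circ h_k$. Since $h_k(v) \in \wedge^{k-1}\g^\vee \otimes \g$ and the $\g$-action on $\g$ is the adjoint action, the Chevalley--Eilenberg formula recalled at the start of Section~\ref{Sec:gradedLPmod} immediately produces the first two sums in the claimed identity: the adjoint-action terms $(-1)^{i+1}[x_i, h_k(v\mid\cdots)]$ and the Jacobi-type terms $(-1)^{i+j} h_k(v \mid [x_i,x_j],\ldots)$. Next, I would unpack $h_k\circ\dCE^V$ on $v$: writing $\dCE^V v = \sum_a \xi^a \otimes (e_a\triangleright v) \in \g^\vee\otimes V^k$ in a dual basis of $\g$ and applying the sign convention $h_k(\omega\otimes w) = (-1)^{|\omega|}\omega\wedge h_k(w)$ used in the proof of Theorem~\ref{Thm:basicdglift}, the Koszul sign converts the natural $(-1)^{i+1}$ from skew-symmetrization into $(-1)^i$ and produces exactly $\sum_i (-1)^i h_k(x_i\triangleright v\mid x_1,\ldots,\widehat{x_i},\ldots,x_k)$. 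Finally, $h_{k+1}\circ d^V_k$ sends $v$ to $h_{k+1}(d^V_k v) \in \wedge^k\g^\vee\otimes\g$, whose evaluation at $(x_1,\dots,x_k)$ gives the last term verbatim. Summing the three contributions matches $(\alpha_k'-\alpha_k)(v\mid x_1,\dots,x_k)$, establishing the ``only if'' direction.

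The converse is obtained by running the same unpacking in reverse: given a family $\{h_k\}$ satisfying the pointwise identity, reassembling the three kinds of sums recovers $\dCE^\g\circ h_k + h_k\circ\dCE^V + h_{k+1}\circ d^V_k$, which is the operator form of \eqref{Eqt:alphaalphaprimehomotopic}. The only real obstacle is bookkeeping: tracking the Koszul sign introduced by extending each $h_k$ from $V^k$ to $\Omegag(V)$ as an $\Omegag$-linear map of odd degree, and checking that this sign interacts correctly with the alternating signs built into $\dCE^\g$ and $\dCE^V$ so that the index on the $h_k(x_i\triangleright v\mid \cdots)$ term ends up as $(-1)^i$ rather than $(-1)^{i+1}$. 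Once this sign accounting is performed carefully, no further idea is required — the proposition is a translation between the global operator condition and its evaluated, multi-linear form.
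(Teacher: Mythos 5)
Your proposal is correct and matches the paper's approach: the paper simply states that the displayed identity is the unravelled form of Equation \eqref{Eqt:alphaalphaprimehomotopic}, and you carry out exactly that unravelling, with the sign bookkeeping (in particular the Koszul sign from extending the degree $(-1)$ map $h_k$ over $\Omegag$, which turns $(-1)^{i+1}$ into $(-1)^{i}$ on the $h_k(x_i\triangleright v\mid\cdots)$ terms) handled correctly.
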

In fact, the above relation is the unravelled form of Equation \eqref{Eqt:alphaalphaprimehomotopic}.

We finally state another characterization of dg \LP modules. Note that a set of multilinear maps
$\alpha = \{\alpha_k\colon V^{k } \to \wedge^k \g^\vee  \otimes \g \}_{0\leqslant  k\leqslant  u} $
can be reorganized as a degree $0$ cochain
\[
c(\alpha) =  c_0(\alpha)+c_1(\alpha)+\cdots+c_{u}(\alpha) \in C (\g , \Hom(V, \g) ),
\]
where
\[
c_k(\alpha) \in \wedge^k \g^\vee\otimes \Hom(V^{k }, \g)
\]
is the equivalent form of $\alpha_k$.
It follows from a straightforward verification that the set $\{\alpha_k\} $ satisfies Equation~\eqref{first equation of f} if and only if $ c(\alpha) $ is a $0$-cocycle.
Based on these facts, one can  prove the following proposition.

\begin{prop}
With notations as above, there is a one-to-one correspondence between dg \LP modules $(V,\alpha)$ over $\g$ and degree $0$ cocycles $c(\alpha)$ of the Chevalley-Eilenberg total complex $ \Omegag(  \Hom(V,  \g) )$ of the Lie algebra $\g$ with coefficients in $\Hom(V,  \g)$. Moreover, there is a one-to-one correspondence between dg \LP modules classes  $(V,[\alpha])$ over $\g$ and degree $0$ cohomology classes of the Chevalley-Eilenberg total complex $ \Omegag(  \Hom(V,  \g) )$.
\end{prop}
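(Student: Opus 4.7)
The plan is to unpack the total Chevalley--Eilenberg complex $\Omegag(\Hom(V,\g))=\wedge^\bullet\g^\vee\otimes\Hom(V,\g)$ into bidegrees and read off the cocycle and coboundary conditions one bidegree at a time. First I would make the dg $\g$-module structure on $\Hom(V,\g)$ explicit: since $\g$ sits in degree $0$, the homogeneous component of degree $n$ is $\Hom(V^{-n},\g)$; the internal differential $d^{\Hom(V,\g)}$ is precomposition with $d^V$ up to the Koszul sign; and the $\g$-action on $\phi\in\Hom(V^k,\g)$ is the standard one,
\[
(x\triangleright\phi)(v)=[x,\phi(v)]_\g-\phi(x\triangleright v),\qquad x\in\g,\ v\in V^k.
\]
With this in hand, $c_k(\alpha)\in\wedge^k\g^\vee\otimes\Hom(V^k,\g)$ has total degree $k+(-k)=0$, so $c(\alpha)=\sum_k c_k(\alpha)$ is a well-defined degree $0$ cochain in $\Omegag(\Hom(V,\g))$.

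Next I would compute $d^{\Hom(V,\g)}_\tot c(\alpha)=\dCE c(\alpha)+d^{\Hom(V,\g)} c(\alpha)$ and group the result by bidegree. The piece landing in $\wedge^{k+1}\g^\vee\otimes\Hom(V^k,\g)$ equals $\dCE(c_k(\alpha))+d^{\Hom(V,\g)}(c_{k+1}(\alpha))$. Unpacking the first summand by the very definition of $\dCE$ on the $\g$-module $\Hom(V,\g)$ reproduces, evaluated on $v\in V^k$ and $(x_1,\ldots,x_{k+1})$, the right hand side of Equation~\eqref{equation of f} with the $\alpha_{k+1}(d^V v\mid\cdots)$ term removed, while the second summand evaluates to exactly that missing term up to sign. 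Consequently, the vanishing of the bidegree $(k+1,-k)$ component of $d_\tot c(\alpha)$ for every $k$ is equivalent to the entire family of relations \eqref{first equation of f}. This establishes the first bijection.

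For the homotopy statement I would repackage a family $h=\{h_k\colon V^k\to\wedge^{k-1}\g^\vee\otimes\g\}_{1\leqslant k\leqslant u+1}$ as in Proposition~\ref{Prop:homotopyinh} into a chain $c(h)=\sum_k c_k(h)$ with $c_k(h)\in\wedge^{k-1}\g^\vee\otimes\Hom(V^k,\g)$, of total degree $-1$. The identical bidegree analysis shows that Equation~\eqref{Eqt:alphaalphaprimehomotopic} translates term by term into $c(\alpha')-c(\alpha)=d^{\Hom(V,\g)}_\tot c(h)$. Passing to homotopy classes on both sides therefore yields the stated bijection between dg \LP classes $(V,[\alpha])$ and $H^0\bigl(\Omegag(\Hom(V,\g))\bigr)$.

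The only genuine subtlety is sign bookkeeping: the $(-1)^p$ built into the extension of $d^V$ to $\Omegag(V)$, the Koszul sign in the induced differential on $\Hom(V,\g)$, and the sign appearing when one reshuffles a map $V^k\to\wedge^k\g^\vee\otimes\g$ into an element of $\wedge^k\g^\vee\otimes\Hom(V^k,\g)$ have to be fixed compatibly. Once a consistent convention is pinned down and verified in a single bidegree, the remaining cases follow by the same pattern, so the main --- but essentially mechanical --- obstacle is merely this initial alignment of signs.
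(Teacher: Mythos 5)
Your proposal is correct and follows exactly the route the paper intends: the paper itself only asserts that reorganizing $\{\alpha_k\}$ into $c(\alpha)=\sum_k c_k(\alpha)$ with $c_k(\alpha)\in\wedge^k\g^\vee\otimes\Hom(V^k,\g)$ turns Equation~\eqref{first equation of f} into the $0$-cocycle condition "by straightforward verification," and your bidegree-by-bidegree comparison of $\dCE c_k(\alpha)+d^{\Hom(V,\g)}c_{k+1}(\alpha)$ with Equation~\eqref{equation of f}, together with the parallel translation of Equation~\eqref{Eqt:alphaalphaprimehomotopic} into $c(\alpha')-c(\alpha)=d_\tot c(h)$, is precisely that verification.
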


\subsection{Dg Lifts of {\LP modules}}

In Section \ref{Sec:dgrepLiealgebras}, we have introduced the functor $ \H^0 $ from the category of dg modules to the category of ordinary $\g$-modules. Clearly, it also induces a functor from the category $[\mathcal{WLP}](\g)$ of dg \LP classes to the category $\mathcal{LP}(\g)$ of \LP modules.
In specific, a dg \LP class $(V,[\alpha])$ is sent by the functor $\H^0$ to the \LP module:
\[
\H^0(V)\xrightarrow{\H^0(\alpha)}{\g},
\]
where $\H^0(V)=\ker (V^{0}\xrightarrow{d_0^V} V^1)$
and $\H^0(\alpha)$ is the restriction of $\alpha_0\colon V^0\to \g$ on $\H^0(V)$.
This definition does not depend on the choice of $\alpha$ that represents the homotopy class $[\alpha]$.

Similarly, a morphism of dg \LP classes
$[f]\colon (V,[\alpha]) \rightsquigarrow (W,[\beta])$ is mapped to the morphism of {\LP modules}
\[
\H^0(f)\colon (\H^0(V),\H^0(\alpha))\to (\H^0(W),\H^0(\beta)),
\]
where $\H^0(f)=f_{0}^0|_{\H^0(V)} $. Note that $\H^0(f)$ is well-defined since it is independent of the choice of $f$ representing $[f]$.

\begin{Def}
Given a \LP module $(G,X)$, a \textbf{dg lift} of $(G,X)$ is a dg \LP module $(V, \alpha)$ such that $(\H^0(V), \H^0(\alpha)) $ is isomorphic to $(G,X)$ as \LP modules.
\end{Def}

We state and prove the first fundamental theorem of the paper.

\begin{Thm}~\label{Thm:foundamentalone}	
Let $G$ be a $\g$-module and $V=(V^\bullet, d^V)$ a resolution of $G$. To every {\LP module} $(G,X)$ over $\g$, there exists a dg lift $(V, \alpha)$ of $(G,X)$. Moreover, the dg lift is unique up to homotopy.
\end{Thm}
\begin{proof}  	
A {\LP module} $(G,X)$ is just a morphism of $\g$-modules $X\colon G\to \g$. By Theorem \ref{Thm:basicdglift}, we can find a dg lift of $X$, i.e., a weak morphism $\alpha\colon V \rightsquigarrow \g$ of dg $\g$-modules, where we regard $\g$ as a trivial complex concentrated in degree $0$ and a dg module of $\g$ in the obvious sense. The uniqueness (up to homotopy) of $\alpha$ is a consequence of Theorem \ref{Thm:basicdglift}.
\end{proof}

It is natural to consider dg lifts of morphisms of \LP modules. We give the definition in a standard fashion.
\begin{Def}
Let $(V, \alpha)$ and $(W, \beta)$ be, respectively, dg lifts of \LP modules $(G,X)$ and $(H,Y)$. Let $\varphi \colon (G,X) \to (H,Y)$  be a morphism of \LP modules.
A \textbf{dg lift} of $\varphi $ is a morphism of dg \LP classes $f \colon (V, \alpha) \rightsquigarrow (W, \beta)$ such that $\H^0(f)$ is compatible with $\varphi $, i.e., the following diagram commutes:
\[
   \begin{tikzcd}
		( \H^0(V),\H^0(\alpha) ) \ar{d}{\cong} \ar{r}{\H^0(f)} & (\H^0(W),\H^0(\beta) ) \ar{d}{\cong} \\
		( G,X) \ar{r}{\varphi} & (H,Y).
\end{tikzcd}
\]	
\end{Def}

The second fundamental theorem concerns the existence and uniqueness of dg lifts of morphisms of \LP modules.

\begin{Thm}~\label{Thm:acyclicgivesliftsofmorphism}
Let $(V, \alpha)$ and $(W, \beta)$ be the dg lifts of \LP modules $(G,X)$ and $(H,Y)$ respectively. 	
Suppose that $V$ is also a resolution of $G$. Then, to every morphism of {\LP modules} $\varphi \colon (G,X) \to (H,Y)$, there exists a dg lift of $\varphi$
\[
 f \colon  (V, \alpha) \rightsquigarrow (W, \beta),
\]
which is unique up to homotopy.
\end{Thm}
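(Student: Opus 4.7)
The plan is to reduce everything to Theorem \ref{Thm:basicdglift}, which already provides existence and uniqueness (up to homotopy) of dg lifts of ordinary $\g$-module morphisms out of an acyclic resolution. I will invoke it three times: once to build $f$, once to verify anchor compatibility, and once for uniqueness of the homotopy class $[f]$.

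First, I would regard $\varphi\colon G\to H$ as a $\g$-module morphism $\H^0(V)\to\H^0(W)$ via the fixed isomorphisms coming from the assumption that $(V,[\alpha])$ and $(W,[\beta])$ are dg lifts of $(G,X)$ and $(H,Y)$. Since $V$ is a resolution of $G$ and $W$ is a non-negative dg lift of $H$, Theorem \ref{Thm:basicdglift} supplies a weak morphism $f\colon V\rightsquigarrow W$ of dg $\g$-modules with $\H^0(f)=\varphi$, determined uniquely up to homotopy.

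Next, I would check that $\beta\circ f$ and $\alpha$ are homotopic as weak morphisms $V\rightsquigarrow\g$, where $\g$ is viewed as the trivial one-term dg $\g$-module $\g[0]$. Both compositions are weak morphisms of dg $\g$-modules, and since $\varphi$ is a morphism of \LP modules one has $\H^0(\beta\circ f)=Y\circ\varphi=X=\H^0(\alpha)$ (for any choice of representatives of $[\alpha]$ and $[\beta]$). The uniqueness part of Theorem \ref{Thm:basicdglift}, applied to this common underlying $\g$-module map $X\colon\H^0(V)\to\g$ and using once more that $V$ is acyclic, then yields $\beta\circ f\simeq\alpha$. Hence the triangle with vertices $V$, $W$, $\g$ commutes up to homotopy, so $[f]$ is indeed a morphism of dg \LP classes from $(V,[\alpha])$ to $(W,[\beta])$, and by construction a dg lift of $\varphi$.

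For uniqueness, if $[f']$ is any other dg lift of $\varphi$ then any representative $f'\colon V\rightsquigarrow W$ satisfies $\H^0(f')=\varphi=\H^0(f)$, and one last application of Theorem \ref{Thm:basicdglift} gives $f\simeq f'$ as weak morphisms of dg $\g$-modules, hence $[f]=[f']$. The entire argument is formal once Theorem \ref{Thm:basicdglift} is in hand; the only conceptual point to keep in mind is that \emph{morphism of dg \LP classes} requires the anchor triangle to commute only up to homotopy, and this is precisely the slack that the homotopy-uniqueness of Theorem \ref{Thm:basicdglift} is designed to absorb. There is therefore no serious obstacle; the work is in setting up the correct bookkeeping between the two layers of homotopy (the one for the weak morphism $f$ and the one for the triangle).
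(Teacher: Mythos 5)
Your proposal is correct and follows essentially the same route as the paper: lift $\varphi$ to $f$ via Theorem \ref{Thm:basicdglift} using that $V$ is a resolution, then note that both $\beta\circ f$ and $\alpha$ are dg lifts of $X=Y\circ\varphi\colon G\to\g$ and invoke the uniqueness clause of the same theorem to get $[\beta]\circ[f]=[\alpha]$, with uniqueness of $[f]$ also coming from that clause. Your write-up merely makes explicit the final uniqueness step that the paper leaves implicit in the phrase ``unique up to homotopy.''
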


\begin{proof}
By Theorem \ref{Thm:basicdglift}, $\varphi$ lifts to a weak morphism of dg $\g$-modules $f \colon  V \rightsquigarrow W$, which is unique up to homotopy.
Since $\beta\circ f \colon V\rightsquigarrow  \g$ lifts $Y\circ \varphi \colon G \to \g$ which is identically $X$, we know that $\beta\circ f$ and $\alpha$ are necessary to be homotopic, i.e., $[\beta]\circ [f]=[\alpha]$, as desired. 	
\end{proof}

\section{Twisted Atiyah classes of dg \LP modules}~\label{Sec:twistedAtiyahclassesetc}
Atiyah introduced a cohomology class in~\cite{Atiyah}, which has become known as the Atiyah class, to characterize the obstruction to the existence of holomorphic connections on a holomorphic vector bundle. The notion of Atiyah classes has been extensively developed in the past decades for diverse purposes (see~\cites{Bottacin,CV,CLX,Costello,CSX,LaurentSX-CR,LaurentSX,MSX}). In this section, we first interpret each dg module $(V,\alpha)$ over $\g$ as a dg derivation of the commutative dg algebra $(\Omega_\g = \wedge \g^\vee, d_{\CE})$. Then we study the twisted Atiyah class of this dg derivation in the sense of~\cite{CLX}. It turns out that the twisted Atiyah class induces a Leibniz algebra structure on the ($(-1)$-shifted) total Chevalley-Eilenberg cohomology of the dg module $V$.

\subsection{Dg derivations}~\label{Sec:twistedAtiyahclassdgderivation}
A $\Z$-graded commutative dg algebra over $\k$ (simplified as dg algebra) is a pair $(\A,d_\A)$, where $\A$ is a commutative and $\Z$-graded $\k$-algebra,
and $d_\A \colon \A  \rightarrow \A$, usually called the differential, is a homogeneous degree $(+1)$ derivation which squares to zero.
By a {dg} $\A$-module, we mean an $\A$-module $\E$, together with a degree $(+1)$ and square zero endomorphism $\partial_\A^\E$ of the graded $\k$-vector space $\E$, also called the differential, such that
\[
\partial^\E_\A  (a e) = (d_\A a )  e + (-1)^{\abs{a}}a \partial_\A^\E(e)
\]
holds for all $a \in \A , e \in \E $.
To work with various different {dg} $\A$-modules, the differential $\partial_\A^\E$ of any {dg} $\A$-module $\E$ will be denoted by the same notation $\partial_\A$.

In this note, we assume that \emph{all $\A$-modules are projective}.
For example, if $\g$ is a Lie algebra, then the space $ \A=\Omegag = \wedge^\bullet \g^\vee$ together with $d_\A=\dCE $ is a dg algebra.
More generally, for a $\g$-module $E$, we have the standard Chevalley-Eilenberg differential $\partial_\A=d^E_{\CE}$ on $\E=\Omegag( E)=\wedge^\bullet \g^\vee\otimes E$, and $\E$ becomes a dg $\A=\Omegag $-module.

We recall the notion of {dg} module valued derivations ({dg} derivations for short) introduced in \cite{CLX}.
\begin{Def}~\label{Def:dgderivations}
Let $(\A,d_\A)$ be a dg algebra and $(\Omega,\partial_\A)$ a {dg} $\A$-module.
\begin{itemize}
\item A \textbf{{dg} derivation} of $\A$ valued in $(\Omega, \partial_\A)$ is a degree $0$ derivation $\delta \colon \A \rightarrow \Omega$ of the graded algebra $\A$ valued in the $\A$-module $\Omega$,
		\[
		\delta(aa') = \delta(a)a' + a\delta(a') = (-1)^{\abs{a}\abs{a'}} a'\delta(a)  + a\delta(a'),\;\;\;\forall a,a' \in \A ,
		\]
		which commutes with the differentials as well:
		\begin{equation}~\label{compatibility}
		\delta \circ d_\A = \partial_\A \circ \delta \colon \A  \rightarrow \Omega.
		\end{equation}
		Such a {dg} derivation is simply denoted by $\A \xrightarrow{\delta} \Omega$.
\item Let $\delta$ and $\delta^\prime$ be two $(\Omega,\partial_\A)$-valued {dg} derivations of $\A$. They are said to be \textbf{homotopic}, written as $\delta\sim \delta'$,
      if there exists a degree $(-1)$ derivation $h$ of $\A$ valued in the $\A$-module $\Omega$ such that
		\[
		\delta^\prime - \delta = [\partial_\A,h] = \partial_\A \circ h + h \circ d_\A\colon  \A \to \Omega.
		\]
	\end{itemize}
\end{Def}

A typical example of {dg} derivations arises from complex geometry --- Consider a complex manifold $M$. The Dolbeault dg algebra $\A=(\Omega_M^{0,\bullet},\bar{\partial})$ is a     dg algebra. Let $\Omega=(\Omega_M^{0,\bullet}((T^{1,0}M)^\vee), \bar{\partial})$ be the dg $\A$-module generated by the section space of holomorphic cotangent bundle $(T^{1,0}M)^\vee$. Then $\partial: \A \to \Omega$ is an $\Omega$-valued derivation of $\A$.
Examples of dg module valued derivations arise from dg Lie algebroids and Lie pairs can be found in \cite{CLX}.

Denote the $\A$-dual $\Omega^\vee$ of $\Omega$ by $\B$, which is also a {dg} $\A$-module. A {dg} derivation $\A \xrightarrow{\delta} \Omega$ can be alternatively described by covariant derivations along elements in $\B$:
\[
\nabla_b \colon \A \to \A ,\qquad \nabla_b(a) := \langle b|\delta(a)\rangle,\quad~\forall b\in \B, a\in \A.
\]
The above condition \eqref{compatibility} is also expressed by the relation
\[
d_\A (\nabla_b a)=\nabla_{\partial_\A b} ~a +(-1)^{|b|} \nabla_b(d_\A a).
\]

We need the category of dg derivations, denoted by $\CATderivationA$, whose objects are dg derivations $\A \xrightarrow{\delta} \Omega$ as in Definition \ref{Def:dgderivations}, and whose morphisms are defined as follows.
\begin{Def}~\label{Def:morphismDerA}
A morphism $\phi$ of dg derivations from $\A \xrightarrow{\delta} \Omega$ to $\A \xrightarrow{\eta} \Theta$ is a morphism $\phi\colon \Omega \rightarrow \Theta$ of dg $\A$-modules such that
\[
	\eta = \phi \circ \delta\colon \A \rightarrow \Theta.
\]
\end{Def}

\subsection{Characterization of dg \LP modules via dg derivations}
We first set up our notations. Let $\g$ be a Lie algebra. Consider the dg algebra $\A= \Omegag$ with its differential $d_\A = \dCE $.
As explained in previous sections, every dg  $\g$-module $W = (W^\bullet,d^W)$ gives rise to a dg $\Omegag $-module $(\Omegag( W ), d^W_\tot)$.

Next, we characterize dg \LP modules via dg derivations of the dg algebra $\Omegag$.
The following is an adaptation of the Definition \ref{Def:dgderivations} to dg $\Omegag $-modules.

\begin{Def}
Let $S = (S^\bullet,d^S)$ be a dg $\g$-module and $(\Omegag( S),d^S_\tot)$ the associated dg $\Omegag $-module.
\begin{itemize}
 \item An $\Omegag( S)$-valued dg derivation of $\Omegag  $ is a degree $0$ linear map
	$\delta\colon  \Omegag    \rightarrow  \Omegag(S)$
	that satisfies the Leibniz rule
	\[
	\delta(\xi \wedge \eta) = (-1)^{\abs{\xi}\abs{\eta}}\eta \wedge \delta(\xi) + \xi \wedge \delta(\eta),\;\;\forall \xi,\eta \in \Omegag
	\]
	and the compatibility condition
	\begin{equation}~\label{commute diagram}
	\delta \circ \dCE  = d^S_\tot \circ \delta \colon  \Omegag   \to \Omegag( S).
	\end{equation}
 \item Let $\delta$ and $\delta'$ be $\Omegag( S)$-valued dg derivations of $\Omegag  $. They are said to be  homotopic, written as $\delta\sim \delta'$, if there exists a degree $(-1)$ derivation $h$: $\Omegag  \to  \Omegag(S)$ such that
   \[
    \delta^\prime - \delta =  d^S_\tot   \circ h + h \circ \dCE  \colon   \Omegag  \to  \Omegag( S).
   \]
\end{itemize}
\end{Def}

Definition \ref{Def:morphismDerA} is also adapted:
A morphism $\phi$ of dg derivations from $\Omegag \xrightarrow{\delta} \Omegag(S)$ to $\Omegag   \xrightarrow{\eta} \Omegag(T)$ is a morphism $\phi\colon \Omegag(S) \rightarrow \Omegag(T)$ of dg $\Omegag$-modules such that
\[
	\eta = \phi \circ \delta \colon \Omegag   \rightarrow \Omegag( T).
\]

\begin{prop}~\label{prop1}
Let $V$ be a non-negative dg $\g$-module and $V^\vee$ endowed with the dg $\g$-module structure in duality to $V$.
A {dg \LP module} structure on $V$ is equivalent to a dg derivation of $\Omegag$ valued in  $\Omegag(V^\vee[-1])$.
Denote this correspondence by $(V, \alpha)\leftrightarrow (\Omegag\xrightarrow{\delta_\alpha}\Omegag(  V^\vee[-1]))$.
Moreover, the correspondence $f\leftrightarrow f^\vee$ is one-to-one, for morphisms $f$ of dg \LP modules from $(V, \alpha )$ to $(W, \beta )$ as in Definition \ref{Def:morphismweakLPmodules} and morphisms $f^\vee$ of dg derivations $f^\vee$ from $\Omegag \xrightarrow{\delta_\beta} \Omegag(  W^\vee[-1])$ to
$\Omegag \xrightarrow{\delta_\alpha}\Omegag(V^\vee[-1])$.
\end{prop}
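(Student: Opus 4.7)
The plan is to use the fact that $\Omegag = \wedge^\bullet \g^\vee$ is the free graded commutative algebra on the degree $1$ generators $\g^\vee$, so any degree $0$ derivation $\delta \colon \Omegag \to \Omegag(V^\vee[-1])$ is uniquely determined by, and uniquely produced by, its restriction $\delta|_{\g^\vee}$.

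First I would unpack this restriction by bidegree. A degree $0$ linear map
\[
\delta|_{\g^\vee} \colon \g^\vee \longrightarrow \bigoplus_{k \geqslant 0} \wedge^k \g^\vee \otimes (V^\vee[-1])^{1-k}
\]
splits as a finite sum (bounded by $u = \min\{\mathrm{top}(V), \dim \g\}$) of components $\g^\vee \to \wedge^k \g^\vee \otimes (V^\vee[-1])^{1-k}$. The shift convention identifies $(V^\vee[-1])^{1-k}$ with $(V^k)^\vee$, and tensor--hom adjunction then shows each such component is exactly the same data as a linear map $\alpha_k \colon V^k \to \wedge^k \g^\vee \otimes \g$. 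This establishes the bijection $\alpha = \{\alpha_k\} \leftrightarrow \delta_\alpha$ at the level of underlying graded data.

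Next I would verify that the compatibility condition \eqref{commute diagram} on $\delta$ matches precisely the cocycle condition \eqref{first equation of f} on $\{\alpha_k\}$. Since $d^{V^\vee[-1]}_\tot$ is the sum of the Chevalley-Eilenberg differential for the dual $\g$-action and the (shifted) dual of $d^V$, applying the identity $\delta \circ \dCE = d^{V^\vee[-1]}_\tot \circ \delta$ to a generator $\xi \in \g^\vee$ and matching the bidegree $(k+1, \bullet)$-components on both sides yields, after dualization, exactly Equation \eqref{first equation of f} for each $k$; conversely, a family satisfying \eqref{first equation of f} produces a map on generators that extends by the Leibniz rule to a derivation obeying \eqref{commute diagram}. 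The main obstacle here is purely bookkeeping: tracking Koszul signs coming from the shift $[-1]$ and from commuting $\xi$ past the degree $(1-k)$ factors in $V^\vee[-1]$; no conceptual difficulty arises.

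For the morphism statement, an $\Omegag$-linear degree $0$ map $f \colon \Omegag(V) \to \Omegag(W)$ dualizes componentwise to an $\Omegag$-linear degree $0$ map $f^\vee \colon \Omegag(W^\vee[-1]) \to \Omegag(V^\vee[-1])$. The total-differential intertwining property \eqref{mor-dg-general} for $f$ dualizes to the analogous property for $f^\vee$, making $f^\vee$ a morphism of dg $\Omegag$-modules. Finally, the triangle condition $\alpha = \beta \circ f$ from Definition~\ref{Def:morphismweakLPmodules}, evaluated on generators $\xi \in \g^\vee$ and dualized, becomes $\delta_\alpha = f^\vee \circ \delta_\beta$, which is exactly the morphism condition in $\CATderivationA$ of Definition~\ref{Def:morphismDerA}. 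The procedure is reversible, giving the claimed one-to-one correspondence $f \leftrightarrow f^\vee$.
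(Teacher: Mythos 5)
Your proposal is correct and follows essentially the same route as the paper: decompose the degree~$0$ derivation via its restriction to the generators $\g^\vee$ into bidegree components, identify each component with $\alpha_k\colon V^k\to\wedge^k\g^\vee\otimes\g$ by duality (the paper's Equation \eqref{deltaandalphak}), match the compatibility condition \eqref{commute diagram} with \eqref{first equation of f}, and dualize weak morphisms to get the correspondence $f\leftrightarrow f^\vee$. Your write-up is, if anything, more explicit than the paper's about the freeness of $\Omegag$ on $\g^\vee$ and the degree bookkeeping, which the paper leaves as ``a direct verification.''
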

\begin{proof}
We assume that $\delta_\alpha\colon \Omegag \rightarrow \Omegag( V^\vee[-1])$ is a derivation of $\Omegag $-modules. Suppose that it is a summation of derivations
\[
 \delta_\alpha=\delta_0 + \cdots + \delta_{u},
\]
where $u$ is defined by Equation \eqref{Eqt:u} and each summand is generated by
\[
\delta_k\!\mid_{\g ^\vee} \colon \g ^\vee \rightarrow \wedge^{k} \g^\vee  \otimes (V^{k})^\vee[-1].
\]
With the set $\{\delta_k\}$ we can associate a family of linear maps
  \[
	\alpha_k\colon V^{k} \to \wedge^{k} \g^\vee \otimes \g ,\;\;\;0\leqslant  k \leqslant  u,
  \]
such that
\begin{equation}~\label{deltaandalphak}
\langle  \alpha_k(v) | \xi \rangle = \langle v[1]|\delta_k(\xi) |   \rangle,\qquad \forall \xi\in \g^\vee, v\in V^k.
\end{equation}

Then the compatibility condition \eqref{commute diagram} of $\delta_\alpha$ being a dg derivation translates to condition \eqref{first equation of f}, i.e., $\alpha = \{\alpha_k\} $ being a dg \LP module. The relation between $\delta_{\alpha}$ and $\alpha$ is clearly one-to-one. The second statement follows easily from definitions of the relevant morphisms.
\end{proof}

 \begin{Cor}~\label{Cork=2}
Let $V = V^2$ be a $\g$-module concentrated in degree $2$. Then a linear map
\[
\alpha_2 \colon V^2 \to  \wedge^2 \g^\vee  \otimes \g
\]
defines a {dg \LP module} $\alpha = \{   \alpha_2\}$ if and only if the corresponding
\[
\alpha_2 \colon \wedge^2 \g\to \Hom(V^2,\g)
\]
defines an abelian extension of $\g$ along $\Hom(V^2, \g)$.
\end{Cor}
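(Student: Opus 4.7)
My plan is to reduce the dg \LP module condition to a $2$-cocycle identity and then invoke the classical classification of abelian extensions.

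Since $V = V^2$ is concentrated in degree $2$, the internal differential $d^V$ vanishes and every $\alpha_k$ with $k \neq 2$ must be zero. Inspecting Equation \eqref{first equation of f}, the relations at $k = 0, 1$ are vacuous (both sides are identically zero) and those at $k \geqslant 3$ carry no data, so the only non-trivial condition is the one at $k = 2$, which reads
\[
\alpha_2 \circ \dCE^V \;=\; \dCE^{\g} \circ \alpha_2 \colon V^2 \longrightarrow \wedge^3\g^\vee \otimes \g.
\]

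Next I would dualize, identifying $\alpha_2$ with a linear map $\omega \colon \wedge^2 \g \to \Hom(V^2,\g)$ defined by $\omega(x_1, x_2)(v) := \alpha_2(v \mid x_1, x_2)$. The $\g$-module $\Hom(V^2,\g)$ carries the standard tensor-product action
\[
(x \cdot \phi)(v) \;:=\; [x, \phi(v)]_\g - \phi(x \triangleright v), \qquad x \in \g, \; \phi \in \Hom(V^2, \g), \; v \in V^2,
\]
which is also the action governing abelian extensions of $\g$ by $\Hom(V^2,\g)$. Unravelling the displayed equation via the fully explicit form \eqref{equation of f} at $k=2$ (noting that its left-hand side $\alpha_3(d^V_2 v \mid \cdots)$ vanishes automatically), I would obtain, term by term, precisely the Chevalley--Eilenberg $2$-cocycle identity $\dCE \omega = 0$ in $\wedge^3 \g^\vee \otimes \Hom(V^2, \g)$.

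The final step is to invoke the classical bijection between $2$-cocycles in $Z^2(\g, \Hom(V^2, \g))$ and abelian extensions of $\g$ by $\Hom(V^2, \g)$. Composing the two equivalences establishes the claim. The point requiring the most care is purely bookkeeping: verifying that the signs in \eqref{equation of f} match those of the Chevalley--Eilenberg differential on $\wedge^\bullet \g^\vee \otimes \Hom(V^2, \g)$, and that the induced $\g$-action on $\Hom(V^2, \g)$ described above indeed coincides with the one used to build the extension. Once these identifications are in place, the equivalence is essentially a translation of conventions; indeed the statement can alternatively be deduced as a direct specialization of the preceding proposition, which identifies dg \LP modules over $\g$ with degree $0$ cocycles in the total Chevalley--Eilenberg complex $\Omegag(\Hom(V, \g))$.
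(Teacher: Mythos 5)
Your proposal is correct and follows essentially the same route as the paper: both reduce the dg \LP module condition to the statement that $\alpha_2$, viewed as an element of $\wedge^2\g^\vee\otimes\Hom(V^2,\g)$, is a Chevalley--Eilenberg $2$-cocycle (the paper by citing the correspondence with degree-$0$ cocycles of $\Omegag(\Hom(V,\g))$, you by unwinding Equation \eqref{equation of f} at $k=2$ by hand), and then both invoke the classical classification of abelian extensions by $H^2(\g,\Hom(V^2,\g))$. Your closing remark that the claim is a direct specialization of the preceding cocycle-correspondence proposition is precisely the paper's one-line argument.
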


\begin{proof}
By (1) of Proposition \ref{prop1}, $\alpha= \{\alpha_2\}$ corresponds to a $0$-cocycle $c_\alpha \in C^2(\g, \Hom(V^2, \g))$. If we forget the degree of $\Hom(V^2, \g)$, then $c_\alpha$ is a $2$-cocycle.
Thus it determines an abelian extension of $\g$ along $\Hom(V^2, \g)$.
\end{proof}

Finally, we state a proposition without proof since it is easy to verify.
\begin{prop}~\label{Prop:homotopy-homotopy}
Two dg \LP modules $(V, \alpha)$ and $(V, \alpha')$ are homotopic if and only if they are homotopic as dg derivation of $\Omegag$ valued in $\Omegag(V^\vee[-1])$.
\end{prop}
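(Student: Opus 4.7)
The plan is to piggyback on the one-to-one correspondence $\alpha \leftrightarrow \delta_\alpha$ already established in Proposition \ref{prop1}, extending the same dualization procedure to the auxiliary data that witnesses a homotopy. Concretely, by Proposition \ref{Prop:homotopyinh}, a homotopy between $(V,\alpha)$ and $(V,\alpha')$ is a collection $h = \{h_k : V^k \to \wedge^{k-1}\g^\vee \otimes \g\}_{1 \leqslant k \leqslant u+1}$ satisfying the unravelled relation arising from \eqref{Eqt:alphaalphaprimehomotopic}, while a homotopy between the associated dg derivations $\delta_\alpha$ and $\delta_{\alpha'}$ is a degree $(-1)$ derivation $\widetilde{h} : \Omegag \to \Omegag(V^\vee[-1])$ with
\[
\delta_{\alpha'} - \delta_\alpha = d^{V^\vee[-1]}_{\tot} \circ \widetilde{h} + \widetilde{h} \circ \dCE.
\]
The task therefore reduces to producing a natural bijection between the families $\{h_k\}$ and such derivations $\widetilde{h}$, and to checking that the two defining equations are interchanged under this bijection.

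For the bijection, I argue exactly as in the proof of Proposition \ref{prop1}. Since $\widetilde{h}$ is an $\Omegag $-linear derivation of the free graded commutative algebra $\Omegag = \wedge^\bullet \g^\vee$, it is entirely determined by its restriction to the generators $\g^\vee$. Decomposing $\widetilde{h}$ according to the $V^\vee[-1]$-weight, one obtains components
\[
\widetilde{h}_k \!\mid_{\g^\vee} \colon \g^\vee \to \wedge^{k-1}\g^\vee \otimes (V^k)^\vee[-1], \qquad 1 \leqslant k \leqslant u+1,
\]
and each such component corresponds bijectively to a map $h_k : V^k \to \wedge^{k-1}\g^\vee \otimes \g$ via the pairing $\langle h_k(v) \mid \xi \rangle = \langle v[1] \mid \widetilde{h}_k(\xi) \rangle$, in perfect analogy with \eqref{deltaandalphak}. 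This assembles $\widetilde{h}$ from $\{h_k\}$ and vice versa.

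The remaining work is to verify that the dg derivation homotopy equation above translates, under dualization, to the unravelled homotopy relation in Proposition \ref{Prop:homotopyinh}. This is a direct calculation completely parallel to the one behind Proposition \ref{prop1}: the term $\widetilde{h} \circ \dCE$ dualizes to the sum involving the brackets $[x_i, h_k(\cdots)]$, the $h_k$ applied to brackets, and the $h_k$ composed with the $\g$-action, while $d^{V^\vee[-1]}_{\tot} \circ \widetilde{h}$ produces the term $h_{k+1}\circ d^V_k$ together with the matching Chevalley--Eilenberg contribution.

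The only serious obstacle I anticipate is bookkeeping: keeping the degree shift on $V^\vee[-1]$ and the Koszul signs produced by moving $\dCE $ past a degree $(-1)$ derivation consistent with those in the unravelled identity. Because the statement is essentially Proposition \ref{prop1} repeated one homological degree lower with no new conceptual content, this justifies the authors' decision to leave the verification to the reader.
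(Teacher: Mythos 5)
Your proposal is correct and is exactly the argument the paper has in mind: the paper states Proposition \ref{Prop:homotopy-homotopy} without proof (``easy to verify''), and the intended verification is precisely your dualization of the homotopy data $\{h_k\}$ into a degree $(-1)$ derivation $\widetilde{h}\colon \Omegag \to \Omegag(V^\vee[-1])$ via the same pairing as in \eqref{deltaandalphak}, followed by matching \eqref{Eqt:alphaalphaprimehomotopic} with the derivation-homotopy identity. The only point worth making explicit in your write-up is that both sides of $\delta_{\alpha'}-\delta_\alpha = d^{V^\vee[-1]}_{\tot}\circ\widetilde{h}+\widetilde{h}\circ \dCE$ are derivations of $\Omegag$, so it suffices to check the identity on the generators $\g^\vee$, which is what reduces the computation to the finite family of equations you describe.
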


\subsection{Twisted Atiyah classes of dg derivations}~\label{Sec:twistedAtiyahclassdgderivation2}
We need another key notion -- $\delta$-connections which is also introduced in \cite{CLX}. One can regard them as operations extending a given dg derivation $\delta$.
\begin{Def}~\label{Def-delta-connection}
Let $\A \xrightarrow{\delta} \Omega$ be a {dg} derivation and $\E$ an $\A$-module. A \textbf{$\delta$-connection} on $\E$ is a degree $0$ linear map of graded $\k$-vector spaces
  \[
	\nabla \colon \E  \rightarrow  \Omega  \otimes_\A \E
  \]
   satisfying the following Leibniz rule:
  \begin{equation}~\label{Leib of connections}
	\nabla(ae) = \delta(a) \otimes e + a\nabla e ,\;\;\forall a \in \A , e \in \E.
   \end{equation}
\end{Def}
A $\delta$-connection $\nabla\colon \E  \rightarrow \Omega \otimes_\A \E$ of the $\A$-module $\E$ is also expressed by the covariant derivation along elements $b \in \B$:
\[
 \nabla_b \colon E \to \E ,\qquad \nabla_b e := \langle b | \nabla e \rangle,\quad~\forall e\in \E.
 \]
Equation \eqref{Leib of connections} is indeed saying
\[
\nabla_b(ae)= (\nabla_b a)e + (-1)^{|a||b|} a \nabla_b e,\qquad \forall a\in \A.
\]

The following notion of twisted Atiyah class is introduced in \cite{CLX}.

\begin{prop-def}[Twisted Atiyah class]~\label{prop:Atiyah via connection}
Let $\A \xrightarrow{\delta} \Omega$ be a {dg} derivation and $\E = (\E, \partial_\A)$ a {dg} $\A$-module.
\begin{itemize}
	\item[1)] For any $\delta$-connection $\nabla $ on $\E$, the degree $(+1)$ element
		\[
		\At_\E^{\nabla}:=[\nabla,\partial_\A] = \nabla  \circ \partial_\A - \partial_\A \circ \nabla  \in \Omega \otimes_\A \End_\A(\E)
		\]
		is a cocycle.
	\item[ 2)] The cohomology class $[\At^{\nabla }_\E] \in H^1(\A, \Omega \otimes_\A \End_\A(\E))$ is independent of the choice of $\delta$-connections $\nabla$ on $\E$.
\end{itemize}
We call $\At_\E^{\nabla }$ the \textbf{$\delta$-twisted Atiyah cocycle} with respect to the $\delta$-connection $\nabla $ on $\E$, and $\At_\E^\delta:=[\At^{\nabla }_\E]$ the \textbf{$\delta$-twisted Atiyah class} of $\E$.
\end{prop-def}

The $\delta$-twisted Atiyah cocycle $\At_\E^{\nabla }$ could be viewed as a degree $(+1)$ element in $\Hom_\A(\B \otimes_\A \E,\E)$ by setting
\begin{align}~\label{Atiyah cocycle}
\At_\E^{\nabla }(b,e) &= (-1)^{\abs{b}}\langle b|\At_\E^{\nabla }(e) \rangle= (-1)^{\abs{b}}\langle b|\nabla (\partial_\A  (e)) - \partial_\A (\nabla  e )\rangle  \notag \\
&= -\partial_\A  (\nabla _b e) + \nabla _{\partial_\A (b)}e + (-1)^{\abs{b}}\nabla _b\partial_\A  (e),
\end{align}
for all $b \in \B $ and $e \in \E$.
Moreover, as $\At_\E^{\nabla }$ is a $1$-cocycle, it is a morphism of {dg} $\A$-modules, i.e., $\At_\E^{\nabla } \in \Hom^1_{\dg\A}(\B \otimes_\A \E, \E)$.

In \cite{CLX}, it is shown that twisted Atiyah classes are homotopy invariant.
If $\delta\sim\delta^\prime$, then for any {dg} $\A$-module $\E$, we have
\[
\At_\E^\delta = \At_\E^{\delta^\prime} \in H^1(\A, \Omega \otimes_\A \End_\A(\E))=H^1(\A, \Hom_\A(\B \otimes_\A \E,\E)).
\]
Moreover, the $\delta$-twisted Atiyah class $\At^\delta_\E$ vanishes if and only if there exists a $\delta$-connection $\nabla$ on $\E$ such that the associated twisted Atiyah cocycle $\At_\E^{\nabla }$ vanishes, i.e., the map $\nabla:~\E \rightarrow \Omega \otimes_\A \E$ is compatible with the differentials.
So $\At^\delta_\E$ can be thought of as the obstruction to the existence of such $\nabla$.

By a dg $\A$-algebra, we mean a commutative dg algebra $C$ which is endowed with an
dg $\A$-module structure. The following result is due to \cite{CLX}\footnote{The original  \cite{CLX}*{Corollary 3.15} is missing the consideration of taking some dg $\A$-algebra $C$ as a coefficient. In fact, if $C$ is chosen to be the naive case $C=\A$, the corresponding two structure maps ${\Bigl[-,-\Bigr]}_{\delta}$ and $-\triangleright-$ are both trivial. Therefore, we have revised the original statement here.}:
\begin{prop}~\label{maincoro}
Let $\A \xrightarrow{\delta} \Omega$ be a dg derivation, $\B$ be the dual dg $\A$-module  of $\Omega$, and $C$ a dg $\A$-algebra.
\begin{compactenum}
 \item The degree $(-1)$ shifted cohomology space $H^\bullet(\A,C\otimes \B[-1])$ is a Leibniz algebra over $H^\bullet(\A) $, whose bracket ${\Bigl[-,-\Bigr]}_{\delta}$ is induced by $C$-bilinear extension of the $\delta$-twisted Atiyah class of $\B$:
	\[
 \Bigl[[b_1 ],[b_2 ]\Bigr]_{\delta}= (-1)^{\abs{b_1}}\At^\delta_\B ([b_1],[b_2]),
    \]
	  where $b_1,b_2 \in C\otimes \B$ are $\partial_\A$-closed elements.
\item For any dg $\A$-module $\E$, there exists a representation of the Leibniz algebra $H^\bullet(\A,C\otimes \B[-1])$ on the cohomology space $H^\bullet(\A,C\otimes \E)$, with the action map $-\triangleright-$ induced by $C$-bilinear extension of the $\delta$-twisted Atiyah class of $\E$:
\[
 [b] \triangleright [e]= (-1)^{\abs{b}}\At^\delta_\E ([b],[e]),
 \]
 where $b\in C\otimes \B$, $e\in C\otimes \E$ are both $\partial_\A$-closed elements.
\end{compactenum}
\end{prop}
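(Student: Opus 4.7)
The plan is to establish both parts by picking an explicit cocycle representative of the twisted Atiyah class via a chosen $\delta$-connection, checking that the resulting operation descends to cohomology, and finally verifying the Leibniz axioms on cohomology by exhibiting the chain-level defect as a $\partial_\A$-coboundary.

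First, I would fix a $\delta$-connection $\nabla$ on $\B$, and, for part (2), also one on $\E$ (denoted by the same symbol). By Proposition-Definition~\ref{prop:Atiyah via connection}, formula~\eqref{Atiyah cocycle} produces degree $(+1)$ cocycles $\At^{\nabla}_\B \in \Hom_\A(\B\otimes_\A \B, \B)$ and $\At^{\nabla}_\E \in \Hom_\A(\B\otimes_\A \E, \E)$, each of which is in fact a morphism of dg $\A$-modules. Such a morphism sends $\partial_\A$-closed elements to $\partial_\A$-closed elements and $\partial_\A$-exact elements (in either argument) to $\partial_\A$-exact elements, so the formulas in the statement give well-defined $H^\bullet(\A)$-bilinear operations on cohomology. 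Independence of the choice of $\nabla$ is exactly the homotopy-invariance of twisted Atiyah classes already recorded in the excerpt.

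Next I would verify the Leibniz identity
\[
\Bigl[[b_1],\,\bigl[[b_2],[b_3]\bigr]_\B\Bigr]_\B = \Bigl[\bigl[[b_1],[b_2]\bigr]_\B,\,[b_3]\Bigr]_\B + (-1)^{|b_1||b_2|}\Bigl[[b_2],\,\bigl[[b_1],[b_3]\bigr]_\B\Bigr]_\B,
\]
and its module analogue with the innermost $b_3$ replaced by $e$, at the chain level. Expanding each nested bracket via~\eqref{Atiyah cocycle}, the non-derivation terms combine through the $\delta$-connection axiom~\eqref{Leib of connections} and cancel in a manner reminiscent of the Bianchi identity for a torsion-free connection. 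The remaining defect should take the form $\partial_\A \circ T(b_1,b_2,b_3) \pm T(b_1,b_2,b_3)\circ \partial_\A$ for a natural ternary operation $T$ built from iterated applications of $\nabla$. This exhibits the defect as a coboundary in the relevant Hom-complex, which is precisely what is required to conclude the Leibniz identity on cohomology; the module axiom is formally identical with $\E$ in the last slot.

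The main obstacle is the sign and combinatorial bookkeeping needed to produce the explicit ternary homotopy $T$. A conceptually cleaner route, which I would use to organize the signs, is to appeal to the Kapranov Leibniz$_\infty[1]$-algebra structure on $\Omega \otimes_\A \B[1]$ established in Theorem~\ref{Main Thm}: by construction its binary bracket is exactly the Atiyah cocycle, and its arity-three $\operatorname{Leib}_\infty$-relation encodes the Leibniz defect as the $\partial_\A$-image of the ternary bracket. This immediately descends to the Leibniz identity on cohomology and, since the construction functorially produces a compatible Leibniz$_\infty[1]$-module on $\Omega \otimes_\A \E$ whose binary action is $\At^{\nabla}_\E$, simultaneously yields the representation statement in part (2).
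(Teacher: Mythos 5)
Your proposal is correct, and it reconstructs what is in fact the intended argument: the paper does not prove Proposition \ref{maincoro} itself but imports it from \cite{CLX}, where the proof is exactly the mechanism you describe in your final paragraph --- the Atiyah cocycle is the binary bracket $\mathcal{R}^\nabla_2$ of the Kapranov Leibniz$_\infty[1]$ $\A$-algebra, the arity-three Leibniz$_\infty[1]$ relation exhibits the chain-level Leibniz defect as $[\partial_\A,\mathcal{R}^\nabla_3]$ with $\mathcal{R}^\nabla_3=\nabla(\mathcal{R}^\nabla_2)$ (your ternary homotopy $T$), and this vanishes on $\partial_\A$-closed classes. Two small corrections: the general statement you should invoke is Proposition \ref{Thm: leibniz infty} (valid for an arbitrary dg derivation $\A\xrightarrow{\delta}\Omega$), not Theorem \ref{Main Thm}, which is its specialization to $\A=\Omegag$; and the Leibniz$_\infty[1]$ structure lives on $\B$ itself, not on ``$\Omega\otimes_\A\B[1]$''. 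Also, independence of the operation from the choice of $\nabla$ is item 2) of Proposition-Definition \ref{prop:Atiyah via connection} rather than the homotopy invariance in $\delta$, and you should say explicitly that the prefactor $(-1)^{\abs{b_1}}$ is the d\'ecalage sign turning the degree $(+1)$ bracket on $H^\bullet(\A,\B)$ into a genuine degree $0$ Leibniz bracket on the shifted space $H^\bullet(\A,\B[-1])$; with those points made, both parts (1) and (2) follow as you indicate.
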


\subsection{Twisted Atiyah classes of dg \LP modules}
As illustrated by Proposition \ref{prop1}, a {dg \LP module} $(V, \alpha)$ is equivalent to a dg derivation
\[
\delta_\alpha\colon \Omegag   \rightarrow \Omega=\Omegag( V^\vee[-1])
\]
of $\A=\Omegag$ such that Equation \eqref{deltaandalphak} holds.
Suppose further that we are given another dg $\g$-module $W = (W^\bullet,d^W)$ which corresponds to the dg $\Omegag $-module $\E=(\Omegag( W ),d^W_\tot)$.
In the rest of this subsection, we figure out the $\delta_\alpha$-twisted Atiyah cocycle and class in this setting.

The dual module of $\Omega=\Omegag( V^\vee[-1])$ is $\B=\Omegag( V[1])$ (since we have assumed that $V$ is finite dimensional).
The dg derivation $\delta_\alpha$ yields the covariant derivation $\nabla_b \colon \Omegag \to \Omegag$ for all $b\in \B$. More precisely, for all $b = \eta \otimes v[1] \in \B$, where $\eta \in \Omegag, v[1] \in V[1]$, we have $\nabla_{b} = \nabla_{\eta \otimes v[1]} = \eta \otimes \nabla_{v[1]}$. Here the derivation $\nabla_{v[1]}$ is given by the following relation
\[
\nabla_{v[1]} \colon \Omegag \to \Omegag ,\mbox{ such that }
\nabla_{v[1]}\xi =  \langle v[1]|\delta_\alpha (\xi) |     \rangle=\langle  \alpha (v) | \xi \rangle,
\]
where $v\in V$ and $\xi\in \g^\vee$. In other words, we may regard
\[
\nabla_{v[1]} = \iota_{\alpha(v)}\colon \Omegag \to \Omegag
\]
as a derivation.

We then take the most naive $\delta_\alpha$-connection $\nabla$ on $\E$ such that
\begin{equation}~\label{Eqt:naivedeltaconnection}
\nabla_{v[1]} (\omega\otimes w)= (\nabla_{v[1]}\omega)\otimes w=( \iota_{\alpha(v)})\omega\otimes w=\iota_{\alpha(v)}(\omega\otimes w),
\end{equation}
where $\omega\in \Omegag $ and $w\in W$.
By Equation \eqref{Atiyah cocycle}, the $\delta_\alpha$-twisted Atiyah cocycle can be computed:
\begin{eqnarray}\nonumber
\At_\E^{\nabla }(v[1],w)&=&- d^W_\tot (\nabla _{v[1]} w) + \nabla _{d^W_\tot (v[1])}w + (-1)^{\abs{v[1]} }\nabla _{v[1]}(d^W_\tot  w) \\\nonumber
&=&(-1)^{\abs{v}-1}\nabla _{v[1]}(d^W_{\CE}  w)=
(-1)^{\abs{v}-1} \langle  \alpha (v)|d^W_{\CE}  w \rangle\\~\label{Eqt:AtEexplicit}
&=& (-1)^{\abs{v}-1}   \alpha (v) \triangleright w.
\end{eqnarray}

In conclusion, we establish the following proposition.
\begin{prop}~\label{Prop:Atiyahcocyclefromdelta}
With settings as above, the $\delta_\alpha$-twisted Atiyah class of $\E=\Omegag( W)$
\[
\At_\E^{\delta_\alpha}  \in H^1(\A, \Hom_\A(\B \otimes_\A \E,\E))=H^1_{\tot}( \g,\Hom_\k(V[1]\otimes W,W) )
\]
is the cohomology class of the particular $1$-cocycle $m\in C^1(\g,\Hom_\k(V[1]\otimes W,W))$ which, as a linear map (of degree $+1$)
\[
m \colon V[1]\otimes W \to \Omegag( W),
\]
is defined by
\[
 v[1]\otimes w \mapsto (-1)^{\abs{v}-1}   \alpha (v) \triangleright w,\qquad \forall v\in V, w\in W.
 \]
\end{prop}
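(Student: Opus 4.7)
The plan is to instantiate the general construction of twisted Atiyah cocycles from Proposition-Definition \ref{prop:Atiyah via connection} for the dg derivation $\delta_\alpha\colon \Omegag \to \Omegag(V^\vee[-1])$ that corresponds to $(V,\alpha)$ via Proposition \ref{prop1}, evaluating everything against the naive $\delta_\alpha$-connection $\nabla$ on $\E = \Omegag(W)$ already introduced in Equation \eqref{Eqt:naivedeltaconnection}. Since the $\A$-dual of $\Omega = \Omegag(V^\vee[-1])$ is $\B = \Omegag(V[1])$, the Atiyah cocycle must be evaluated on pairs $(v[1], w)$ with $v \in V$ and $w \in W$ in order to identify $m$.

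First I would verify that \eqref{Eqt:naivedeltaconnection} truly defines a $\delta_\alpha$-connection. Since $\nabla_{v[1]}$ acts as the derivation $\iota_{\alpha(v)}$ on the $\Omegag$-factor and $W$-linearly on $W$, the Leibniz rule \eqref{Leib of connections} reduces to the graded derivation property of $\iota_{\alpha(v)}$ on $\Omegag$, which is automatic since $\alpha(v) \in \wedge^{\abs{v}}\g^\vee \otimes \g$ acts as a wedge by the form component followed by contraction against the $\g$-component. Compatibility with \eqref{deltaandalphak} on generators $\xi \in \g^\vee$ confirms that $\nabla$ extends $\delta_\alpha$.

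Next I would substitute $b = v[1]$ and $e = w \in W$ into the cocycle formula \eqref{Atiyah cocycle},
\[
\At_\E^{\nabla}(v[1], w) = -d^W_\tot(\nabla_{v[1]} w) + \nabla_{\partial_\A(v[1])} w + (-1)^{\abs{v[1]}} \nabla_{v[1]}(d^W_\tot w),
\]
and observe that the first two terms vanish: $w$ carries trivial $\Omegag$-form degree, and the naive connection acts by interior contraction on that factor, so $\nabla_{v[1]} w = 0$; the same reasoning kills $\nabla_{\partial_\A(v[1])} w$, since $\partial_\A(v[1])$ still lies in $\B$. For the third term I would split $d^W_\tot w = d^W_\CE w + d^W w$: the summand $d^W w \in W$ is again annihilated by $\nabla_{v[1]}$ for the same form-degree reason, whereas $d^W_\CE w \in \g^\vee \otimes W$ pairs with $\alpha(v)$ by contracting its $\g$-part against the $\g^\vee$-part of $d^W_\CE w$, producing exactly $\alpha(v) \triangleright w$. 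Tracking the sign $(-1)^{\abs{v[1]}} = (-1)^{\abs{v}-1}$ then yields the stated formula for $m$.

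Finally, part 2 of Proposition-Definition \ref{prop:Atiyah via connection} guarantees that the cohomology class of $\At_\E^{\nabla}$ is independent of the chosen $\delta_\alpha$-connection, so $m$ indeed represents $\At_\E^{\delta_\alpha}$ in $H^1_{\tot}(\g, \Hom_\k(V[1] \otimes W, W))$. The only mildly delicate step is the vanishing of the second term, where one must decompose $\partial_\A(v[1])$ into its horizontal ($\dCE$) and vertical ($d^V[1]$) summands and check that each still lies in $\B$, so that the naive connection applied to $w$ annihilates it; this is purely bookkeeping, and no structural obstacle is expected.
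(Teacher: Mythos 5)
Your proposal is correct and follows essentially the same route as the paper: the paper also takes the naive $\delta_\alpha$-connection of Equation \eqref{Eqt:naivedeltaconnection}, substitutes $b=v[1]$, $e=w$ into the cocycle formula \eqref{Atiyah cocycle}, observes that only the term $(-1)^{\abs{v[1]}}\nabla_{v[1]}(d^W_{\CE}w)=(-1)^{\abs{v}-1}\langle\alpha(v)\mid d^W_{\CE}w\rangle=(-1)^{\abs{v}-1}\alpha(v)\triangleright w$ survives, and then invokes part 2) of Proposition-Definition \ref{prop:Atiyah via connection} for independence of the connection. Your extra check that \eqref{Eqt:naivedeltaconnection} really is a $\delta_\alpha$-connection is a harmless (and welcome) addition that the paper leaves implicit.
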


Note that $\At_\E^{\delta_\alpha}$ depends only on the homotopy class of $\delta_\alpha$, and hence only on the dg \LP class $(V, [\alpha])$, according to Proposition \ref{Prop:homotopy-homotopy}.

Let $A$ be a $\g$-algebra, i.e., $A$ is a commutative  algebra equipped with a  $\g$-module structure such that $\g$-acts by derivations. Then $C=\Omegag( A )$ is endowed with a dg $\A=\Omegag$-algebra structure.
Furthermore, we have the following consequence of Proposition \ref{maincoro}.

\begin{prop}~\label{prop:LeibnizgradedLPmodule}	
 \begin{compactenum}
  \item The (degree $(-1)$ shifted) cohomology space $$H^\bullet(\A,C\otimes \B[-1])=H^\bullet_{\tot} ( \g,A\otimes V )$$ is a Leibniz algebra (over $H^\bullet(\A) =H^\bullet_{\CE}(\g)$), whose bracket ${\Bigl[-,-\Bigr]}_{\delta}$ is induced by the $\delta_\alpha$-twisted Atiyah class of $\B[-1]=\Omegag( V )$:
		\begin{eqnarray*}
		&&	\At_{\B[-1]}^{\delta_\alpha}   \in H^1(\A, \Hom_\A(\B \otimes_\A \B[-1],\B[-1]))\\ &=& H^0(\A, \Hom_\A(\B[-1] \otimes_\A \B[-1],\B[-1]))
		=H^0_{\tot}( \g,\Hom_\k(V \otimes V ,V ) ).
\end{eqnarray*}
		Moreover, $\At_{\B[-1]}^{\delta_\alpha}$ is the cohomology class of the particular $0$-cocycle
\[
r \in C^0( \g,\Hom_\k(V \otimes V ,V ) )
\]
 which, as a linear map $r \colon V \otimes V \to \Omegag(V)$ is defined by
\[
 v \otimes v' \mapsto    \alpha (v) \triangleright v',\qquad \forall v,v'\in V.
 \]		
\item There exists a representation of the Leibniz algebra $H^\bullet_{\tot} ( \g,A\otimes V ) $ on the cohomology space $H^\bullet(\A,C\otimes\E)=H^\bullet_{\tot} ( \g,A\otimes W )$, with the action map $-\triangleright-$ induced by the $\delta_\alpha$-twisted Atiyah class of $\E=\Omegag( W)$. In other words, $-\triangleright-$ is induced by the $0$-cocycle
\[
r\in C^0( \g,\Hom_\k(V \otimes W ,W ) )
\]
which, as a linear map  $c \colon V \otimes W \to \Omegag( W) $, is defined by
\[
 v \otimes w \mapsto    \alpha (v) \triangleright w,\qquad \forall v \in V, w\in W.
 \]
\end{compactenum}	
\end{prop}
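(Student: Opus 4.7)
The plan is to deduce this proposition as a direct specialization of Proposition \ref{maincoro} applied to the dg derivation $\delta_\alpha \colon \Omegag \to \Omegag(V^\vee[-1])$ that corresponds, via Proposition \ref{prop1}, to the dg \LP module $(V,\alpha)$. With $\A = \Omegag$, $\Omega = \Omegag(V^\vee[-1])$, and $\B = \Omega^\vee = \Omegag(V[1])$, the shift commutes with tensoring by $\wedge^\bullet \g^\vee$, so there is a canonical isomorphism of dg $\A$-modules $\B[-1] \cong \Omegag(V)$. Under this identification,
\[
H^\bullet(\A, \B[-1]) \cong H^\bullet_{\tot}(\g, V), \qquad H^\bullet(\A, \E) \cong H^\bullet_{\tot}(\g, W), \qquad H^\bullet(\A) = H^\bullet_{\CE}(\g),
\]
where $\E = \Omegag(W)$ is the dg $\A$-module attached to an arbitrary dg $\g$-module $W$. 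Thus Proposition \ref{maincoro}(1) produces a Leibniz algebra structure on $H^\bullet_{\tot}(\g, V)$ over $H^\bullet_{\CE}(\g)$, and Proposition \ref{maincoro}(2) produces a representation of this Leibniz algebra on $H^\bullet_{\tot}(\g, W)$, abstractly realizing the structures claimed in (1) and (2).

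Next I would pin down explicit cocycle representatives for the bracket and action using Proposition \ref{Prop:Atiyahcocyclefromdelta}. For item (2), that proposition is invoked verbatim and yields the $1$-cocycle $m(v[1] \otimes w) = (-1)^{|v|-1}\alpha(v) \triangleright w$ representing $\At^{\delta_\alpha}_\E$. Substituting into the action formula $[v] \triangleright [w] = (-1)^{|b|} \At^{\delta_\alpha}_\E([b],[w])$ of Proposition \ref{maincoro}(2) with $b = v[1]$, the degree $|b| = |v|-1$ makes the two signs cancel and leaves the claimed sign-free formula $[v] \triangleright [w] = [\alpha(v) \triangleright w]$. For item (1), the same Proposition \ref{Prop:Atiyahcocyclefromdelta} is applied with $W$ replaced by $V$, so that $\E$ becomes $\B[-1] \cong \Omegag(V)$; it produces the cocycle $(v[1] \otimes v') \mapsto (-1)^{|v|-1}\alpha(v) \triangleright v'$, and the sign $(-1)^{|b_1|}$ in the bracket formula of Proposition \ref{maincoro}(1) again cancels to produce the sign-free $0$-cocycle $r(v \otimes v') = \alpha(v) \triangleright v'$ asserted in the statement.

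The only genuinely delicate step is the bookkeeping around the degree shift: one must verify that the isomorphism $\B[-1] \cong \Omegag(V)$ is truly one of dg $\A$-modules (so that cocycles transport correctly), and that the two independent signs coming from Proposition \ref{maincoro} and Proposition \ref{Prop:Atiyahcocyclefromdelta} combine to cancel exactly as described. The Leibniz identity, the compatibility with the $H^\bullet_{\CE}(\g)$-module structure, and the module axioms for the representation are all automatic from Proposition \ref{maincoro} once these identifications are in place, so no further argument is required beyond the results already established in the paper and in \cite{CLX}.
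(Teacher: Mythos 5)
Your proposal is correct and follows exactly the route the paper intends: the paper states this proposition without a separate proof, presenting it as a direct consequence of Proposition \ref{maincoro} applied to the dg derivation $\delta_\alpha$ from Proposition \ref{prop1}, with the explicit cocycle representatives supplied by Proposition \ref{Prop:Atiyahcocyclefromdelta}. Your sign check (the $(-1)^{\abs{v}-1}$ from the Atiyah cocycle cancelling against the $(-1)^{\abs{b}}$ in Proposition \ref{maincoro} with $b=v[1]$) and the identification $\B[-1]\cong\Omegag(V)$ are precisely the details left implicit in the text.
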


\begin{Rem} The trivial case $A=\k$ does not yield anything interesting because the said Leibniz bracket in $H^\bullet_{\tot} ( \g,  V )$ and module structure on $H^\bullet_{\tot} ( \g,  W )$ are both zero. For a nontrivial case, following the idea of Kapranov \cite{Kap}, we take   $A=S(V^\vee)$, the symmetric algebra of $V$ (concentrated in degree $0$).
	Then $H^\bullet_{\tot} ( \g,S(V^\vee)\otimes V )$ admits a Leibniz bracket which is generally nontrivial. 	In fact, consider the $\partial_\A$-closed element $\id_{V}\in S(V^\vee)\otimes V$. One can verify that
	${\Bigl[\id_{V} ,\id_{V}\Bigr]}_{\delta}$ is identically $\At_{\B[-1]}^{\delta_\alpha}$.
\end{Rem}

\section{Kapranov sh Leibniz algebras arising from dg \LP modules}~\label{Sec:KapranovLeibnizetc}
Sh (short for strongly homotopy) Leibniz algebras, also known as Leibniz$_\infty$-algebras,   Leibniz$_\infty[1]$-algebras, or Loday infinity algebras, are homotopy replacements of Leibniz algebras or Loday algebras introduced by Loday~\cite{Loday}. The category of sh Leibniz algebras  introduced by Ammar and Poncin~\cite{AP} enjoys some nice phenomena as its subcategory of sh Lie algebras~\cite{LS}, like the minimal model theorem proved by Ammar and Poncin in~\emph{op.cit.}
In our earlier work \cite{CLX}, Kapranov's construction of an $L_\infty$-algebra structure~\cite{Kap} and Chen-Sti\'{e}non-Xu's construction of a Leibniz$_\infty[1]$-algebra structure~\cite{CSX} were generalized to the setting of a dg derivation $\A \xrightarrow{\delta} \Omega$ of a dg algebra $\A$. In this section, we continue to investigate such kinds of objects which stem from dg \LP modules. .

\subsection{Sh Leibniz algebras and related notions}
 In academic literature, the terms Sh Leibniz algebra and Leibniz$_\infty$-algebra are interchangeable. In this note, we opt to use the equivalent terminology of Leibniz$_\infty[1]$-algebras \cite{CSX}.

 \begin{Def}~\label{Def:Leibnizinfinity}
A Leibniz$_\infty[1]$-algebra over $\k$ is a graded vector space $N = \oplus_{n \in \Z}N^n$, together with a sequence $\{\lambda_k\colon \otimes^k N \rightarrow N\}_{k \geqslant  1}$ of degree $(+1)$ and multilinear maps satisfying
\begin{align*}
	&\sum_{i+j=n+1}\sum_{k=j}^{n}\sum_{\sigma \in \sh(k-j,j-1)}\epsilon(\sigma)(-1)^{\abs{v_{\sigma(1)}}+\cdots+ \abs{v_{\sigma(k-j)}}} \notag \\
	&\qquad\qquad\lambda_{i}\big(v_{\sigma(1)},\cdots,v_{\sigma(k-j)},\lambda_{j}(v_{\sigma(k-j+1)}, \cdots,v_{\sigma(k-1)}, v_k), v_{k+1},\cdots, v_n\big)=0,
\end{align*}
for all $n \geqslant  1$ and all homogeneous elements $v_i \in N$, where $\sh(p,q)$ denotes the set of $(p,q)$-shuffles ($p,q \geqslant  0$), and $\epsilon(\sigma)$ is the Koszul sign of  $\sigma$.
\end{Def}
\begin{Def}
A  morphism of Leibniz$_\infty[1]$-algebras from $(N, \{\lambda_k\}_{k\geqslant 1})$ to $(N^\prime, \{\lambda^\prime_k\}_{k\geqslant 1})$ is a sequence $\{f_k\colon N^{\otimes k} \rightarrow N^\prime\}_{k \geqslant  1}$ of degree $0$ and  multilinear maps satisfying the following compatibility condition:
\begin{align*}
 &\quad \sum_{k+p \leqslant  n-1}\sum_{\sigma \in \sh(k,p)}\epsilon(\sigma)(-1)^{\dagger^{\sigma}_k}f_{n-p}\big(b_{\sigma(1)},\cdots,b_{\sigma(k)}, \lambda_{p+1}(b_{\sigma(k+1)}, \cdots,b_{\sigma(k+p)},b_{k+p+1}),\cdots,b_{n}\big) \notag \\
&= \sum_{q \geqslant  1}\sum_{\substack{I^1\cup\cdots\cup I^q = \mathbb{N}^{(n)} \\ I_1,\cdots,I_q \neq \emptyset \\ i_{\abs{I^1}}^1 < \cdots < i^q_{\abs{I^q}}}}\epsilon(I^1,\cdots,I^q) \lambda_q'\big(f_{\abs{I^1}}(b_{I^1}), \cdots,f_{\abs{I^q}}(b_{I^q})\big),
\end{align*}
for all $n \geqslant  1$, where $\dagger^{\sigma}_k = \sum_{i=1}^k\abs{b_{\sigma(i)}}$, $I^j = \{i^j_1 < \cdots < i^j_{\abs{I^j}}\} \subset \mathbb{N}^{(n)} = \{1,\cdots,n\}$, and $(b_{I^j}) = (b_{i^j_1},\cdots,b_{i^j_{\abs{I^j}}})$ for all $1\leqslant  j \leqslant  q$.
\end{Def}
In the above definition, the first component $f_1\colon (N,\lambda_1) \rightarrow (N^\prime, \lambda_1^\prime)$, called the tangent morphism, is a morphism of cochain complexes. We call the Leibniz$_\infty[1]$ morphism $f_\bullet\colon (N, \lambda_\bullet) \rightarrow (N^\prime, \lambda^\prime_\bullet)$ an \textbf{isomorphism} if $f_1$ is an isomorphism.
In fact, there is a standard way to find its inverse
$f^{-1}_\bullet\colon (N^\prime, \lambda^\prime_\bullet) \rightarrow (N, \lambda_\bullet)$ (see \cite{AP}).

\begin{Def}~\label{Def:Leibnizinfinitymodule}
Let $(N,\{\lambda_k\}_{k\geqslant 1})$ be a Leibniz$_\infty[1]$-algebra.
An $(N,\{\lambda_k\}_{k\geqslant 1})$-module is a graded vector space $U$ together with a sequence $\{\mu_k\colon N^{\otimes(k-1)} \otimes U \rightarrow U\}_{k \geqslant  1}$ of degree $(+1)$ and multilinear maps, called actions, satisfying the identities
\begin{align*}
	&\sum_{i+j=n+1}\sum_{k=j}^{n}\sum_{\sigma \in \sh(k-j,j-1)}\epsilon(\sigma)(-1)^{\dagger^\sigma_{k-j}}  \notag\\
	&\qquad\mu_{i}\big(v_{\sigma(1)},\cdots,v_{\sigma(k-j)},\lambda_{j}(v_{\sigma(k-j+1)}, \cdots,v_{\sigma(k-1)}, v_k), v_{k+1},\cdots, v_{n-1},w \big) \notag \\
	&\qquad+\sum_{1\leqslant  j \leqslant  n}\sum_{\sigma \in \sh(k,j)}\epsilon(\sigma)(-1)^{\dagger^\sigma_{n-j}} \mu_{i}\big(v_{\sigma(1)},\cdots,v_{\sigma(n-j)},\mu_{j}(v_{\sigma(n-j+1)}, \cdots,v_{\sigma(n-1)}, w)\big)=0,
\end{align*}
for all $n \geqslant  1$ and all homogeneous vectors $v_1,\cdots,v_{n-1} \in N, w \in U$, where $\dagger^\sigma_j = \abs{v_{\sigma(1)}}+\cdots+ \abs{v_{\sigma(j)}}$ for all $j \geqslant  0$.
\end{Def}

In this note, we are interested in Leibniz$_\infty[1]$-algebras over a dg algebra $\A$, or Leibniz$_\infty[1]$ $\A$-algebras.
\begin{Def}~\label{Def:LeibnizinfinityA}
A Leibniz$_\infty[1]$ $\A$-algebra is a Leibniz$_\infty[1]$-algebra $(N, \{\lambda_k\}_{k\geqslant 1})$ (in the category of Leibniz$_\infty[1]$-algebras over $\k$) such that the cochain complex $(N,\lambda_1)$ is a dg $\A$-module and all higher brackets $\lambda_k\colon \otimes^k N \rightarrow N$ ($k\geqslant  2$) are  $\A$-multilinear.
	
A morphism of Leibniz$_\infty[1]$  $\A$-algebras from $(N, \{\lambda_k\}_{k\geqslant 1})$ to $(N', \{\lambda'_k\}_{k\geqslant 1})$ is given by a morphism $\{f_k \colon N^{\otimes k}\rightarrow N'\}_{k \geqslant  1}$ of Leibniz$_\infty[1]$ $\k$-algebras such that all structure maps $\{f_k\}_{k\geqslant  1}$ are  $\A$-multilinear. In particular, its tangent morphism $f_1\colon (N,\lambda_1) \rightarrow (N^\prime,\lambda^\prime_1)$ is a dg $\A$-module morphism.
	
Such a morphism $f_\bullet\colon (N,\lambda_\bullet) \rightarrow (N',\lambda'_\bullet)$ is called an \textbf{isomorphism} if its tangent morphism $f_1$ is an isomorphism of dg $\A$-modules.
\end{Def}
Denote the category of Leibniz$_\infty[1]$ $\A$-algebras by $\CATLeibnizoneA$. It is a subcategory of the category of Leibniz$_\infty[1]$-algebras over $\k$.
There are analogous notions of modules of a Leibniz$_\infty[1]$ $\A$-algebra and their morphisms.

\begin{Def}~\label{Def:LeibnizinfinityAmodule}
Let $(N,\{\lambda_k\}_{k\geqslant 1})$ be a Leibniz$_\infty[1]$ $\A$-algebra.
A $(N,\{\lambda_k\}_{k\geqslant 1})$ $\A$-module is a $(N,\{\lambda_k\}_{k\geqslant 1})$-module $(U,\{\mu_k\}_{k\geqslant 1})$ (in the category of Leibniz$_\infty[1]$-modules over $\k$)
such that $(U,\mu_1)$ is a dg $\A$-module and all higher structure maps $\{\mu_k\}_{k\geqslant  2}$ are  $\A$-multilinear.
\end{Def}

\subsection{The Kapranov functor} We inherit the previous notations in Section \ref{Sec:twistedAtiyahclassdgderivation}.
Let $\A \xrightarrow{\delta} \Omega$ be a dg derivation and $\B$ the dual dg $\A$-module of $\Omega$.

\subsubsection{Kapranov sh Leibniz algebras}
Choose a $\delta$-connection $\nabla$ on $\B$. We introduce a sequence of degree $(+1)$ maps
$ \mathcal{R}^\nabla_k\colon \B^{\otimes k} \rightarrow \B, k \geqslant  1 $
which are defined as follows:
\begin{itemize}
	\item The first one, also known as the unary  map, is $\mathcal{R}^\nabla_1 = \partial_\A\colon \B \rightarrow \B$;
	\item The second one, also known as the binary bracket, is $\mathcal{R}^\nabla_2:=\At_\B^\nabla$,   the associated twisted Atiyah cocycle;
	\item The higher operations $\{\mathcal{R}^\nabla_{k+1}\colon \B^{\otimes(k+1)} \rightarrow \B\}_{k\geqslant 2}$ are defined recursively by
	$\mathcal{R}^\nabla_{k+1}=\nabla(\mathcal{R}^\nabla_{k})$.
 \end{itemize}
Explicitly, we have
	\begin{align}\nonumber
		 \mathcal{R}^\nabla_{k+1}(b_0,b_1,\cdots,b_{k}) &= (-1)^{\abs{b_0}}[\nabla_{b_0},\mathcal{R}^\nabla_k] (b_1,\cdots,b_k)& \\\nonumber
		&= (-1)^{\abs{b_0}} \nabla_{b_0}\circ \mathcal{R}^\nabla_k(b_1,\cdots,b_k)\\~\label{Rnabla}
		&\qquad - \sum_{i=1}^k
		(-1)^{\abs{b_0}(\abs{b_1}+\cdots+\abs{b_{i-1}})}   \mathcal{R}^\nabla_k(b_1,\cdots,b_{i-1},\nabla_{b_0}b_i,b_{i+1},\cdots, b_k) , 		
	\end{align}
for all $ b_0,\cdots,b_k \in \B$.
\begin{prop}~\label{Thm: leibniz infty}
The $\A$-module $\B$, together with the sequence of operators $\{\mathcal{R}^\nabla_k\}_{k\geqslant  1}$, is a Leibniz$_\infty[1]$ $\A$-algebra.
\end{prop}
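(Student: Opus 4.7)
The plan is to interpret the claim as a Maurer--Cartan equation for the total operation $\mathcal{R}^\nabla = \sum_{k\geqslant 1}\mathcal{R}^\nabla_k$ in a suitable graded Lie algebra of multilinear operations on $\B$, and to prove this equation inductively from the recursive definition $\mathcal{R}^\nabla_{k+1} = \nabla(\mathcal{R}^\nabla_k)$.

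First I would set up the formalism: equip $C^{\bullet,\bullet} = \bigoplus_{k\geqslant 1}\Hom^\bullet_\A(\B^{\otimes k}, \B)$ with the Balavoine bracket (the Leibniz analogue of the Nijenhuis--Richardson bracket), so that Leibniz$_\infty[1]$ structures on $\B$ correspond to square-zero elements of total degree $+1$. Equivalently, one assembles the $\mathcal{R}^\nabla_k$'s into a single degree $+1$ coderivation $D$ on the tensor coalgebra $T(\B)$ and reformulates the Leibniz$_\infty[1]$ axioms of Definition \ref{Def:Leibnizinfinity} as the single identity $D^2=0$; the shuffle sums and Koszul signs in the definition then become automatic outputs of the coderivation property.

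Second, I would establish two auxiliary identities at the operator level. The unary case $\partial_\A \circ \partial_\A = 0$ is built in, and the binary case is the cocycle property of $\At^\nabla_\B = \mathcal{R}^\nabla_2$ from Proposition-Definition \ref{prop:Atiyah via connection}. For the inductive step, the formula \eqref{Rnabla} expresses $\mathcal{R}^\nabla_{k+1}$ as the graded commutator of the operator $\nabla_{b_0}$ with $\mathcal{R}^\nabla_k$; applying $[\nabla_{b_0},-]$ to the $n$-th Leibniz$_\infty[1]$ relation and using the compatibility \eqref{compatibility} between $\nabla$ and $\partial_\A$ (together with the graded Jacobi identity for commutators) yields the $(n+1)$-st relation. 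Base case plus this step close the induction.

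The main obstacle I anticipate is the combinatorial bookkeeping: checking that the Koszul signs and the shuffles $\sh(k-j,j-1)$ produced by the inductive step match the ones prescribed in Definition \ref{Def:Leibnizinfinity}. The cleanest way to absorb this difficulty is to work throughout with the coderivation $D$ on $T(\B)$ rather than with the individual $\mathcal{R}^\nabla_k$'s; then $D^2 = 0$ can be verified by showing that on cogenerators it is the Balavoine square bracket $\tfrac12[\mathcal{R}^\nabla,\mathcal{R}^\nabla]$, which by the two auxiliary identities is built entirely from $\partial_\A^2 = 0$ and iterated covariant derivations thereof. In fact, since the construction is a special case of the general Kapranov machinery for a dg derivation developed by the authors in \cite{CLX}, I would expect the proof to reduce to a direct invocation of that result, after identifying $\delta$ with the dg derivation at hand and $\nabla$ with the chosen $\delta$-connection on $\B$.
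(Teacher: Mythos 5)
You should first note that the paper itself contains no proof of this proposition: it is recalled verbatim from \cite{CLX}, where the statement is proved exactly in this generality (a dg derivation $\A\xrightarrow{\delta}\Omega$ together with a $\delta$-connection $\nabla$ on $\B=\Omega^\vee$). So your closing remark is precisely the paper's route, except that the proposition is not a \emph{special case} of the result of \cite{CLX} to be reduced to it --- it \emph{is} that result, quoted. Your from-scratch outline (Maurer--Cartan/coderivation reformulation plus induction driven by the recursion $\mathcal{R}^\nabla_{k+1}=\nabla\mathcal{R}^\nabla_k$) is the standard strategy for such statements, but as written its inductive step has a concrete gap.

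The gap is your appeal to ``the compatibility \eqref{compatibility} between $\nabla$ and $\partial_\A$''. Equation \eqref{compatibility} is a property of the dg derivation $\delta$ (equivalently of the operators $\nabla_b$ acting on $\A$); on the module $\B$ the connection $\nabla$ is \emph{not} compatible with $\partial_\A$ --- the failure is exactly the Atiyah cocycle, $[\nabla_{b},\partial_\A]=(-1)^{\abs{b}}\bigl(\mathcal{R}^\nabla_2(b,-)-\nabla_{\partial_\A b}\bigr)$, and the recursion $\mathcal{R}^\nabla_{k+1}(b_0,\cdots)=(-1)^{\abs{b_0}}[\nabla_{b_0},\mathcal{R}^\nabla_k](\cdots)$ is only valid for $k\geqslant 2$. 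Consequently, applying $[\nabla_{b_0},-]$ to the $n$-th Leibniz$_\infty[1]$ identity reproduces the terms of the $(n+1)$-st identity in which $b_0$ heads the outer or the inner bracket, but it produces extra correction terms $\nabla_{\partial_\A b_0}(\cdots)$ whenever the unary bracket $\mathcal{R}^\nabla_1=\partial_\A$ is the one being ``upgraded'', and the $(n+1)$-st identity itself contains terms of the shape $\mathcal{R}^\nabla_{n+1}(\partial_\A b_0,b_1,\cdots,b_n)$ in which $b_0$ is hit by $\partial_\A$ rather than differentiated along. Matching these two families of leftover terms (again via the recursion, now along $\partial_\A b_0$, and the inductive hypothesis) is the actual content of the verification; it is not absorbed by Koszul-sign and shuffle bookkeeping, and your plan does not identify it. A smaller imprecision: for Leibniz$_\infty[1]$ structures the relevant coalgebraic model is not the tensor coalgebra with deconcatenation (that encodes $A_\infty$); one needs the Zinbiel-type coproduct underlying the Balavoine bracket, so that the shuffle sums of Definition \ref{Def:Leibnizinfinity} come out correctly. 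With the corrected commutation relation and the extra cancellation step the induction does close, but if you do not want to carry that out, the honest proof here is the one the authors give: cite \cite{CLX}.
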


This method originates from Kapranov's construction of $L_\infty$-algebra structure on the shifted tangent complex $\Omega_X^{0,\bullet-1}(T^{1,0}X)$ of a compact K\"{a}hler manifold $X$~\cite{Kap}. For this reason, we call $(\B,\{\mathcal{R}^\nabla_k\}_{k\geqslant  1})$ the \textbf{Kapranov Leibniz}$_\infty[1]$ $\A$-algebra.
The Leibniz$_\infty[1]$ $\A$-algebra $(\B,\{\mathcal{R}^\nabla_k\}_{k\geqslant  1})$ will be denoted by $\Kap^c(\delta)$.
Here the superscript $c$ is to remind the reader that this Leibniz$_\infty[1]$ $\A$-algebra is defined via a particular $\delta$-connection on $\B$.
	
Moreover, the assignment of a {Kapranov Leibniz}$_\infty[1]$  $\A$-algebra to each pair of a dg derivation $\A \xrightarrow{\delta} \Omega$ and a $\delta$-connection $\nabla$ on $\B$ is functorial as described next.
Let $\phi$ be a morphism from $\A \xrightarrow{\delta^\prime} \Omega^\prime$ to $\A \xrightarrow{\delta} \Omega$ in the category $\CATderivationA$ of dg derivations (see Definition~\ref{Def:morphismDerA}).
Let $\B=\Omega^\vee$ and $\B'=(\Omega')^\vee$ be their dual dg $\A$-modules. For a $\delta$-connection $\nabla$ on $\B$ and a $\delta^\prime$-connection $\nabla^\prime$ on $\B^\prime$, there exists a morphism $f_\bullet = \{f_k\}_{k \geqslant  1}$ of Kapranov Leibniz$_\infty[1]$ $\A$-algebras from $(\B,\{\mathcal{R}^{\nabla}_k\}_{k\geqslant 1})$ to $(\B^\prime, \{\mathcal{R}^{\nabla^\prime}_k\}_{k\geqslant 1})$, whose first map is $f_1=\phi^\vee$.
In other words, we have the following commutative diagram
\[
	\begin{tikzcd}
		(\A \xrightarrow{\delta^\prime} \Omega^\prime) \ar{d}{\phi} \ar{r}{\Kap^{c}} & (\B^\prime,\{\mathcal{R}^{\nabla^\prime}_k\}_{k\geqslant 1}) \\
		(\A \xrightarrow{\delta} \Omega) \ar{r}{\Kap^c} & (\B,\{\mathcal{R}^{\nabla}_k\}_{k \geqslant  1}). \ar{u}{\Kap^c(\phi) = f_\bullet}
	\end{tikzcd}
\]

As a consequence, the Kapranov's construction defines a contravariant functor
\[
  \Kap^c\colon  \CATderivationA \to \CATLeibnizoneA
\]
from the category $\CATderivationA$ of dg derivations of $\A$ to the category $\CATLeibnizoneA$ of Leibniz$_\infty[1]$-algebras.
		
We need to stress the independence from  choice of connections in the definition of Kapranov functors.
For a dg derivation $\A \xrightarrow{\delta} \Omega$, suppose that we have another $\delta$-connection $\widetilde{\nabla}$ on $\B = \Omega^\vee$.
Denote the corresponding Kapranov Leibniz$_\infty[1]$ $\A$-algebra by $\Kap^{\tilde{c}}(\delta) = (\B,\mathcal{R}_k^{\widetilde{\nabla}})$.
It turns out that there exists a \textit{natural equivalence} between the two Kapranov functors $\Kap^c$ and $\Kap^{\tilde{c}}$ with respect to different connections.
By this natural equivalence, we are allowed to drop the superscript $c$.

A nice property is that the isomorphism class of Kapranov Leibniz$_\infty[1]$ $\A$-algebras arising from dg derivations depends only on their homotopy class.
\begin{prop}~\label{Thm: homotopy invariance}
Let $\delta \sim \delta^\prime$ be homotopic $\Omega$-valued dg derivations of $\A$. Then there exists an isomorphism $\{g_k\}_{k\geqslant 1}$ sending the Kapranov Leibniz$_\infty[1]$ $\A$-algebra $\Kap(\delta^\prime) = (\B,\{\mathcal{R}^{\nabla^\prime}_k\}_{k\geqslant 1})$ (with respect to a $\delta^\prime$-connection $\nabla^\prime$) to $\Kap(\delta) = (\B,\{\mathcal{R}^{\nabla}_k\}_{k\geqslant 1})$ (with respect to a $\delta$-connection $\nabla$).
\end{prop}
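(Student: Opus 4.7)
The plan is to construct an explicit isomorphism $\{g_k\}_{k\geq 1}$ from the homotopy data witnessing $\delta\sim\delta'$.

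First, I would reduce by naturality. Since any two $\delta$-connections on $\B$ yield naturally isomorphic Kapranov Leibniz$_\infty[1]$ $\A$-algebras (this was already stated above the proposition), and likewise for any two $\delta'$-connections, it suffices to produce the desired isomorphism for one convenient pair $(\nabla,\nabla')$; the general case then follows by composing with the natural equivalences that change connections on either side.

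Next, I would build a compatible pair from the homotopy. Let $h\colon \A\to\Omega$ be the degree $(-1)$ derivation with $\delta'-\delta=\partial_\A\circ h+h\circ d_\A$. Starting from an arbitrary $\delta$-connection $\nabla$ on $\B$, I would construct a companion $\delta'$-connection $\nabla'$ on $\B$ by adding to $\nabla$ a perturbation $\Phi(h)$ built functorially from $h$, $\partial_\A$ and $d_\A$; the homotopy identity for $h$ should ensure that the required Leibniz rule for $\nabla'$ with respect to $\delta'$ follows formally from the Leibniz rule for $\nabla$ with respect to $\delta$. The key feature of $\Phi(h)$ is that, restricted to any $b\in\B$, the difference $\nabla'_b-\nabla_b$ acts $\A$-linearly on $\B$ and decomposes as a commutator with $\partial_\A$ of an $\A$-linear operator extracted from $h$.

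I would then construct $\{g_k\}$ recursively, mirroring Equation \eqref{Rnabla}. Set $g_1:=\id_\B$, which is a dg $\A$-module isomorphism $(\B,\partial_\A)\to(\B,\partial_\A)$, hence a legitimate tangent morphism. For $k\geq 1$, define $g_{k+1}(b_0,\ldots,b_k)$ by exactly the recursive pattern of $\mathcal{R}^\nabla_{k+1}$ in Equation \eqref{Rnabla}, but with $\nabla_{b_0}$ replaced by the $b_0$-component of the correction $\Phi(h)$ and with $\mathcal{R}^\nabla_k$ replaced by $g_k$. The Leibniz$_\infty[1]$ morphism identity of Definition \ref{Def:LeibnizinfinityA} is then verified by induction on $k$: the base case reduces to $g_1$ being a chain map, and the inductive step expands both $\mathcal{R}^{\nabla'}_n$ and $\mathcal{R}^\nabla_n$ via their recursions and uses the explicit relation $\nabla'-\nabla=\Phi(h)$ to match the two sides. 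Since $g_1$ is an isomorphism of dg $\A$-modules, the resulting $\{g_k\}$ is then automatically an isomorphism of Leibniz$_\infty[1]$ $\A$-algebras (its inverse being produced by the standard procedure for inverting Leibniz$_\infty[1]$-morphisms with invertible tangent part).

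I expect the main obstacle to be the combinatorial bookkeeping of Koszul signs and shuffle sums in the inductive step of (4), which is analogous to the combinatorics underlying Proposition \ref{Thm: leibniz infty}; no new conceptual difficulty should arise beyond writing down the perturbation formula $\Phi(h)$ explicitly and checking that, as a derivation-valued $1$-form on $\B$, it really does interpolate between $\nabla$ and $\nabla'$.
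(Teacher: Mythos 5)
The paper itself does not prove this proposition (it is quoted from \cite{CLX}), so your proposal can only be judged on its own terms, and as written it has a genuine gap rather than just unfinished bookkeeping. A first, local error: for your compatible pair the difference $\nabla'-\nabla$ \emph{is} the perturbation $\Phi(h)$, whose symbol is $\delta'-\delta=\partial_\A\circ h+h\circ d_\A\neq 0$; hence $\nabla'_b-\nabla_b$ is not $\A$-linear on $\B$ (it differentiates coefficients via $\langle b|(\delta'-\delta)(a)\rangle$), and the operator one extracts from $h$ (a degree $(-1)$ map $L\colon\B\to\Omega\otimes_\A\B$ with symbol $h$, which exists by projectivity) is not $\A$-linear either. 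It is $L$, and the $\A$-linear degree-$0$ discrepancy $\nabla'-\nabla-[\partial_\A,L]$, that have to enter the components $g_k$ — not the full correction $\Phi(h)$.

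The more serious problem is that your recursion collapses. With $g_1=\id_\B$ and $g_{k+1}(b_0,\dots,b_k):=(-1)^{\abs{b_0}}\bigl[\Phi(h)_{b_0},g_k\bigr](b_1,\dots,b_k)$ following the pattern of \eqref{Rnabla}, the first step gives $g_2(b_0,b_1)=(-1)^{\abs{b_0}}\bigl(\Phi(h)_{b_0}(b_1)-\Phi(h)_{b_0}(b_1)\bigr)=0$, since the bracket of any operator with the identity vanishes, and then inductively $g_k=0$ for all $k\geqslant 2$. So your $\{g_k\}$ is the strict identity morphism, which is a Leibniz$_\infty[1]$-morphism only if $\mathcal{R}^{\nabla'}_k=\mathcal{R}^{\nabla}_k$ for every $k$. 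For a general pair $(\nabla,\nabla')$ this already fails at $k=2$ (the twisted Atiyah cocycles are merely cohomologous, not equal); and even for the special choice $\nabla'=\nabla+[\partial_\A,L]$, for which $\mathcal{R}^{\nabla'}_2=\mathcal{R}^{\nabla}_2$ on the nose, the recursion \eqref{Rnabla} gives $\mathcal{R}^{\nabla'}_3(b_0,-,-)-\mathcal{R}^{\nabla}_3(b_0,-,-)=\pm\bigl[(\nabla'-\nabla)_{b_0},\mathcal{R}^{\nabla}_2\bigr]$, which is nonzero in general. So the heart of the proof — producing genuinely nontrivial, degree-$0$, $\A$-multilinear components $g_k$ out of the homotopy $h$ (with $g_2$ a primitive of $\mathcal{R}^{\nabla'}_2-\mathcal{R}^{\nabla}_2$ in $\Hom_\A(\B\otimes_\A\B,\B)$, and higher $g_k$ mixing the homotopy operator with the brackets $\mathcal{R}_k$ themselves rather than with the previously built $g$'s) and then verifying the shuffle identities of Definition \ref{Def:LeibnizinfinityA} — is exactly what is missing; your Ansatz cannot supply it.
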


Let $\A \xrightarrow{\delta} \Omega$ be a dg derivation of $\A$ and $\E$ a dg $\A$-module. By similar arguments as above, $\E$ carries a Leibniz$_\infty[1]$ $\A$-module structure over $\Kap(\delta)$. Indeed, there exists a functor, also called the \textbf{Kapranov functor}, from the category $\DG\A$ of dg $\A$-modules to the category of Leibniz$_\infty[1]$ $\A$-modules over $\Kap(\delta)$.

\subsection{The Kapranov functor applied to dg \LP modules}
Let us fix a {dg \LP module} $(V, \alpha )$ throughout this section. Consider the dg algebra $\A=\Omegag $ and its module $\Omega$ $=\Omegag(  V^\vee[-1])$.
The $\A=\Omegag $-dual of $\Omega$ is clearly $\B=\Omegag(  V[1])$.

By Proposition \ref{prop1}, the {dg \LP module} $(V, \alpha )$ corresponds to a dg derivation $ \delta_\alpha$ of  $\A=\Omegag  $ valued in $\Omega=\Omegag(  V^\vee[-1])$ (the relationship is shown in Equation \eqref{deltaandalphak}).
We take the most naive $\delta_\alpha$-connection $\nabla$ on $\B=\Omega^\vee=\Omegag(  V[1])$ such that:
\begin{equation}~\label{Eqt:naivedeltaconnection2}
	\nabla_{v[1]} (\omega\otimes v'[1])= (\nabla_{v[1]}\omega)\otimes v'[1]=( \iota_{\alpha(v)}\omega)\otimes v'[1]=\iota_{\alpha(v)}(\omega\otimes v'[1]),
\end{equation}
where $\omega\in \Omegag $, $v,v'\in V$. Recall that we have used such types of naive $\delta_\alpha$-connections in Equation \eqref{Eqt:naivedeltaconnection}.

\begin{Thm}~\label{Main Thm}
Let $(V, \alpha )$ be a dg \LP module over $\g$, $\delta_\alpha\colon   \Omegag   \rightarrow \Omegag( V^\vee[-1])$ the associated dg derivation, and $\nabla$ as in Equation \eqref{Eqt:naivedeltaconnection2} the $\delta_\alpha$-connection on $\B=\Omegag(  V[1])$.
Then by the Kapranov functor, we obtain a Leibniz$_\infty[1]$ $\Omegag  $-algebra
$\Kap(\delta_\alpha) = (\B,\{\mathcal{R}^{\nabla}_k\}_{k\geqslant 1})$ on the $\Omegag $-module $\B $ whose structure maps $\{\mathcal{R}^{\nabla}_k\}_{k\geqslant 1}$ are described as follows:
\begin{compactenum}
  \item The unary bracket $\mathcal{R}^{\nabla}_1$ is the   total differential $d^{V[1]}_\tot =\dCE ^{V[1]} +d^{V[1]} $ of the dg $\g$-module $V[1]$;
  \item The binary bracket $\mathcal{R}^{\nabla}_2$ is the   $\delta_\alpha$-twisted Atiyah cocycle of $\B$ described by Proposition \ref{Prop:Atiyahcocyclefromdelta}.
  In other words, it is  $\Omegag  $-bilinear and extended from the degree $(+1)$ map
	\begin{align*}
		\mathcal{R}^{\nabla}_2\colon  V[1] {\otimes  }V[1] &\rightarrow \Omegag(  V[1]),\\
		v[1]\otimes v'[1] & \mapsto (-1)^{\abs{v}-1}   \alpha (v) \triangleright v'[1],\qquad \forall v,v'\in V.
	\end{align*}
  \item For all $n\geqslant  2$, the $( n+1)$-bracket $\mathcal{R}^{\nabla}_{n+1}$ is $\Omegag  $-multilinear and extended from the degree $(+1)$ map
     \begin{align*}
		\mathcal{R}^{\nabla}_{n+1}\colon (V[1])^{\otimes (n+1)} &\rightarrow \Omegag(  V[1])
	\end{align*}
	defined by
\begin{eqnarray}\nonumber
	\mathcal{R}^{\nabla}_{n+1}(v_0[1],\cdots, v_{n }[1])&=&(-1)^{\abs{v_0}+\cdots+\abs{v_{n-1}}-n}\bigl( \iota_{\alpha(v_0)}\circ  \iota_{\alpha(v_1)} \circ \cdots  \circ \iota_{\alpha(v_{n-2})}\alpha(v_{n-1})\bigr)\triangleright v_n[1],\\\label{Eqt:Rnexplicit1}
	&&
\end{eqnarray}
for all $  v_i\in V $.
\end{compactenum}
\end{Thm}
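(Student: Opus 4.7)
My plan is to verify the three parts of the theorem by unfolding the general Kapranov recipe from Section~\ref{Sec:KapranovLeibnizetc} on the specific dg derivation $\delta_\alpha$ and the naive connection~\eqref{Eqt:naivedeltaconnection2}. Parts~(1) and~(2) should follow essentially by definition: by construction, $\mathcal{R}_1^\nabla = \partial_\A$ is the internal differential of $\B = \Omegag(V[1])$ viewed as a dg $\Omegag$-module, which is exactly the total Chevalley-Eilenberg differential $d_\tot^{V[1]} = d_\CE^{V[1]} + d^{V[1]}$ of the dg $\g$-module $V[1]$; and $\mathcal{R}_2^\nabla = \At_\B^\nabla$ is the $\delta_\alpha$-twisted Atiyah cocycle, whose explicit expression is given by Proposition~\ref{Prop:Atiyahcocyclefromdelta} specialized to $W = V[1]$.

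For part~(3), I would argue by induction on $n \geqslant 2$, starting from the binary bracket in part~(2) and applying the recursive formula~\eqref{Rnabla}. The crucial simplification I expect to exploit is the following vanishing: for any $v, v' \in V$, viewed as pure $V[1]$-elements $1 \otimes v[1]$ and $1 \otimes v'[1]$ inside $\B = \Omegag(V[1])$, the naive $\delta_\alpha$-connection~\eqref{Eqt:naivedeltaconnection2} gives
\[
\nabla_{v[1]}(v'[1]) \;=\; (\iota_{\alpha(v)} 1) \otimes v'[1] \;=\; 0,
\]
because $\iota_{\alpha(v)}$ is built out of interior products $\iota_x$ with $x \in \g$, and these annihilate the unit $1 \in \Omegag$. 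As a consequence, every term of the form $\mathcal{R}_k^\nabla(v_1[1], \ldots, \nabla_{v_0[1]} v_i[1], \ldots, v_k[1])$ in the sum on the right-hand side of~\eqref{Rnabla} vanishes on pure $V[1]$-arguments, and the recursion collapses to
\[
\mathcal{R}_{k+1}^\nabla(v_0[1], \ldots, v_k[1]) \;=\; (-1)^{|v_0[1]|} \, \nabla_{v_0[1]} \bigl( \mathcal{R}_k^\nabla(v_1[1], \ldots, v_k[1]) \bigr).
\]

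Substituting the inductive hypothesis for $\mathcal{R}_k^\nabla$ and using that $\nabla_{v_0[1]} = \iota_{\alpha(v_0)}$ acts only on the $\Omegag$-factor of $\B$ (so it commutes through the $\g$-action on $V[1]$ and merely prepends $\iota_{\alpha(v_0)}$ to the existing chain of interior products) should recover precisely the right-hand side of~\eqref{Eqt:Rnexplicit1}. The hardest part of the argument is the bookkeeping of Koszul signs: these combine contributions from the shift $V \leadsto V[1]$ (so that $|v[1]| = |v| - 1$), from the graded commutator $[\nabla_{v_0[1]}, \mathcal{R}_k^\nabla]$ underlying~\eqref{Rnabla}, and from the intrinsic degree $|v_i| - 1$ of each $\iota_{\alpha(v_i)}$ as an operator on $\Omegag$. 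These manipulations are routine but error-prone, and aligning them with the claimed overall sign $(-1)^{|v_0| + \cdots + |v_{n-1}| - 1}$ is where I expect to spend most of the care.
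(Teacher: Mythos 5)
Your proposal follows the paper's own proof essentially verbatim: parts (1) and (2) are read off from the definitions of $\mathcal{R}^\nabla_1=\partial_\A$ and $\mathcal{R}^\nabla_2=\At_\B^\nabla$ together with Proposition \ref{Prop:Atiyahcocyclefromdelta}, and part (3) is the same induction on the recursion \eqref{Rnabla}, collapsed by the very vanishing $\nabla_{v_0[1]}v_i[1]=0$ that the paper invokes. The sign bookkeeping you defer is likewise left implicit in the paper ("an easy induction yields the desired equation"), so your argument is a faithful match in both structure and level of detail.
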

\begin{proof}
We only need to show the third statement. By its construction Equation \eqref{Rnabla}, $\mathcal{R}^{\nabla}_{n+1}$ is determined inductively by the following formula:
\[
	\mathcal{R}^{\nabla}_{n+1}(v_0[1],\cdots, v_{n }[1])=(-1)^{\abs{v_0}-1}\iota_{\alpha(v_0)} \mathcal{R}^{\nabla}_{n }(v_1[1],\cdots, v_{n }[1]),\qquad \forall v_i\in V.
\]
Here we used the fact that $\nabla_{v_0[1]}v_i[1]=0$. Then an easy induction yields the desired Equation~\eqref{Eqt:Rnexplicit1}.
\end{proof}
\begin{Rem}
In the particular case that $V$ is concentrated in degree $0$, say, $V=V^0=G $ and hence $d^V=0$, then $\alpha$ consists of only one map $\alpha_0=X\colon  G  \to  \g $
and $V[1]$ is concentrated in degree $(-1)$. The dg \LP module $(V,\alpha)$ is indeed an ordinary \LP module $\bigl(G\xrightarrow{X} \g \bigr)$.
By Equation \eqref{Eqt:Rnexplicit1}, all higher brackets $\mathcal{R}^{\nabla}_{n}$ vanish for $n\geqslant  3$. So the Kapranov Leibniz$_\infty[1]$ $\Omegag $-algebra structure on $\B=\Omegag( V[1] )=\Omegag(G[1])$ consists of two maps $ \dCE ^{G[1]} $ and $\mathcal{R}^{\nabla}_{2}$.
In other words, $\Omegag( G[1] )$ is a dg Leibniz$[1]$ algebra.
The pure degree $(-1)$ part of which, i.e., $G[1]$, is a Leibniz$[1]$ algebra with the bracket (by (2) of Theorem \ref{Main Thm}):
\begin{equation}\label{Eqt:temptemp}
\mathcal{R}^{\nabla}_2(g[1],g'[1])=-X(g)\triangleright g'[1],\quad \forall g,g'\in V^0=G,
\end{equation}
and hence $G$ is a Leibniz algebra. This recovers the original construction of Leibniz algebra structure on $V$ by Loday and Pirashvili \cite{LP}.
\end{Rem}

As an application, we consider an ordinary \LP module $\bigl(G\xrightarrow{X} \g \bigr)$.	
Let $V=(V^\bullet, d^V)$ be a non-negative dg $\g$-module which is also a resolution of $G$. Hence we can identify  $G$ directly with the sub $\g$-module $\ker(d_0^V)$ of $V^0$.
By Theorem \ref{Thm:foundamentalone} for the existence of dg lift of a \LP module, to the {\LP module } $(G,X)$, there exists a dg lift $(V, \alpha)$ of $(G,X)$, which is unique up to homotopy.
In particular we have a map $\alpha_0 \colon V^0\rightarrow \g$ which extends $X: G\to \g$.

Now, applying Theorem \ref{Main Thm} to $(V, \alpha )$, namely, the Kapranov functor to the associated dg derivation $\delta_\alpha\colon   \Omegag  \rightarrow \Omegag( V^\vee[-1])$,
we obtain a Leibniz$_\infty[1]$ $\Omegag  $-algebra
$\Kap(\delta_\alpha) = (\B,\{\mathcal{R}^{\nabla}_k\}_{k\geqslant 1})$ on the $\Omegag $-module $\B= \Omegag(V[1])$.
The resulting Leibniz$_\infty[1]$ $\Omegag$-algebra structure is unique up to isomorphism (independent of the choice of $\alpha$), hence it depends only on the given \LP module structure $X:~G\rightarrow  \g  $. We observe that  the binary bracket $\mathcal{R}^{\nabla}_2$, when restricted to $G[1]=\ker(d_0^V)[1]\subset V^0[1]$, is again given by Equation \eqref{Eqt:temptemp}, which matches exactly with the Leibniz algebra structure on $G$ by Loday and  Pirashvili \cite{LP}. Thereby, the   Leibniz$_\infty[1]$ $\Omegag  $-algebra
$  (\Omegag(V[1]),\{\mathcal{R}^{\nabla}_k\}_{k\geqslant 1})$ can be regarded as a  resolution of the dg Leibniz$[1]$ algebra  $\Omegag( G[1] ) $.

We summarize  the  above discussions into the following proposition which is a  supplement to the previous Theorem \ref{Thm:foundamentalone} .

\begin{prop} 	
	Let $G$ be a $\g$-module and $V=(V^\bullet, d^V)$ a resolution of $G$. To every {\LP module} $(G,X)$ over $\g$,
	there exists a Leibniz$_\infty[1]$ $\Omegag  $-algebra
	$  (\Omegag(V[1]),\{\mathcal{R}^{\nabla}_k\}_{k\geqslant 1})$ which extends the dg Leibniz$[1]$ algebra  $\Omegag( G[1] ) $ arising from the  {\LP module} $(G,X)$.
\end{prop}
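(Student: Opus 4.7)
The plan is to assemble the proposition directly from the machinery already developed in the paper, viewing it as a corollary of Theorems \ref{Thm:foundamentalone} and \ref{Main Thm}, together with the verification that the restriction to the degree $(-1)$ subspace $G[1]\subset V^0[1]$ recovers the classical Loday--Pirashvili bracket. Concretely, the first step is to invoke Theorem \ref{Thm:foundamentalone}: since $V$ is a resolution of $G$, the \LP module morphism $X\colon G\to \g$ admits a dg lift $(V,\alpha)$, unique up to homotopy as a dg \LP class $(V,[\alpha])$. By the construction in Section \ref{Sec:dgrepLiealgebras}, we may and will pick $\alpha_0\colon V^0\to \g$ so that its restriction to $\ker(d_0^V)\cong G$ equals $X$.

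The second step is to feed $(V,\alpha)$ into Theorem \ref{Main Thm}. This yields the Leibniz$_\infty[1]$ $\Omegag$-algebra $(\Omegag(V[1]),\{\mathcal{R}^{\nabla}_k\}_{k\geqslant 1})$ associated with the dg derivation $\delta_\alpha$ and the naive $\delta_\alpha$-connection $\nabla$ of Equation \eqref{Eqt:naivedeltaconnection2}. To see that this output does not depend on the auxiliary choice of representative $\alpha\in[\alpha]$, I would appeal to Proposition \ref{Thm: homotopy invariance}: two homotopic dg derivations yield isomorphic Kapranov Leibniz$_\infty[1]$ $\Omegag$-algebras, and the natural equivalence of the Kapranov functor with respect to different $\delta_\alpha$-connections (mentioned right after Proposition \ref{Thm: leibniz infty}) removes the connection dependence.

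The third step is to check the extension property, i.e., that the classical dg Leibniz$[1]$ algebra $\Omegag(G[1])$ arising from $(G,X)$ via Loday--Pirashvili sits inside $(\Omegag(V[1]),\{\mathcal{R}^{\nabla}_k\})$ compatibly. Since $G=\ker(d_0^V)$, the subspace $G[1]\subset V^0[1]$ lies in the kernel of $\mathcal{R}^{\nabla}_1=d^{V[1]}_\tot$ modulo the Chevalley--Eilenberg piece, and on this subspace one computes directly from part (2) of Theorem \ref{Main Thm} that
\[
\mathcal{R}^{\nabla}_2(g[1],g'[1]) \;=\; (-1)^{|g|-1}\alpha_0(g)\triangleright g'[1] \;=\; -X(g)\triangleright g'[1],
\]
for $g,g'\in G$, which is precisely Equation \eqref{Eqt:temptemp}, the Loday--Pirashvili bracket on $G[1]$. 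Moreover, by formula \eqref{Eqt:Rnexplicit1} all higher brackets $\mathcal{R}^{\nabla}_{n+1}$ ($n\geqslant 2$) vanish when restricted to $G[1]^{\otimes(n+1)}$ because the differential $d^V$ is zero on $G$, so only $\alpha_0$ contributes, and the iterated contractions $\iota_{\alpha_0(g_0)}\cdots\iota_{\alpha_0(g_{n-2})}\alpha_0(g_{n-1})$ vanish on elements of $\wedge^0\g^\vee\otimes \g$. This shows that the inclusion $\Omegag(G[1])\hookrightarrow \Omegag(V[1])$ is a strict morphism of Leibniz$_\infty[1]$ $\Omegag$-algebras onto its image, giving the desired extension.

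The main obstacle, to my mind, is essentially notational rather than mathematical: one must be careful to disentangle the degree shifts in the identification $\ker(d_0^V)[1]\subset V^0[1]\subset \Omegag(V[1])$, and to match the sign conventions between Equation \eqref{Eqt:temptemp} (derived within the dg setting) and the original Loday--Pirashvili bracket on $G$ given by $[g,g']_G := X(g)\triangleright g'$. Once those sign bookkeeping issues are aligned, the proposition is a transparent combination of Theorems \ref{Thm:foundamentalone}, \ref{Main Thm}, and \ref{Thm: homotopy invariance}.
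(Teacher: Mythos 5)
Your proposal is correct and follows essentially the same route as the paper: the paper itself obtains this proposition as a summary of applying Theorem \ref{Thm:foundamentalone} to lift $(G,X)$ to a dg \LP class $(V,[\alpha])$ with $\alpha_0$ extending $X$, then feeding $(V,\alpha)$ into Theorem \ref{Main Thm} and observing that $\mathcal{R}^{\nabla}_2$ restricted to $G[1]=\ker(d_0^V)[1]$ reproduces Equation \eqref{Eqt:temptemp}, with independence of choices handled by homotopy invariance of the Kapranov functor. Your extra check that the higher brackets $\mathcal{R}^{\nabla}_{n+1}$ ($n\geqslant 2$) vanish on $G[1]$ arguments, since $\iota_{\alpha_0(g)}$ kills elements of $\wedge^0\g^\vee\otimes\g$, only makes explicit what the paper leaves implicit and is consistent with its treatment.
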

We also have the following corollary which is a consequence of the functionality of the Kapranov functor and Proposition \ref{prop1}.

\begin{Cor}
Let $ f $ be a morphism of dg \LP modules from $(V, \alpha )$ to $(V', \alpha' )$ as in Definition \ref{Def:morphismweakLPmodules}.
Let $(\B=\Omegag( V[1]),\{\mathcal{R}^{\nabla}_k\}_{k\geqslant 1})$ and $(\B'=\Omegag( V'[1]),\{\mathcal{R}^{\nabla'}_k\}_{k\geqslant 1})$ be, respectively,
the corresponding Kapranov Leibniz$_\infty[1]$ $\Omegag$-algebra arising from $(V, \alpha )$ and $(V', \alpha' )$ as described by Theorem \ref{Main Thm}.
Then there exists a morphism of Kapranov Leibniz$_\infty[1]$ $\Omegag $-algebras from $(\B,\{\mathcal{R}^{\nabla}_k\}_{k\geqslant 1})$ to $(\B^\prime, \{\mathcal{R}^{\nabla^\prime}_k\}_{k\geqslant 1})$  whose first map $\B\to \B'$ stems from $f: \Omegag( V)\to \Omegag( V')$.
\end{Cor}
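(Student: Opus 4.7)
The plan is to deduce this corollary as a direct consequence of combining the dictionary of Proposition \ref{prop1} with the contravariant functoriality of the Kapranov construction spelled out after Proposition \ref{Thm: leibniz infty}.

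First, I would translate $f$ into the language of dg derivations. By Proposition \ref{prop1}, the morphism $f\colon (V,\alpha) \to (V',\alpha')$ of dg \LP modules corresponds bijectively to a morphism $\phi := f^\vee$ in $\CATderivationA$ going in the opposite direction, namely from $\Omegag \xrightarrow{\delta_{\alpha'}} \Omegag(V'^\vee[-1])$ to $\Omegag \xrightarrow{\delta_\alpha} \Omegag(V^\vee[-1])$. Concretely, $\phi$ is an $\Omegag$-linear morphism of dg $\Omegag$-modules satisfying $\phi \circ \delta_{\alpha'} = \delta_\alpha$.

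Next, I would apply the contravariant Kapranov functor $\Kap$ to this morphism $\phi$, using the two naive $\delta$-connections on $\B = \Omegag(V[1])$ and $\B' = \Omegag(V'[1])$ fixed in Theorem \ref{Main Thm}. By the general description of $\Kap$ on morphisms given just after Proposition \ref{Thm: leibniz infty}, this produces a morphism of Kapranov Leibniz$_\infty[1]$ $\Omegag$-algebras
\[
\Kap(\phi)\colon (\B, \{\mathcal{R}^{\nabla}_k\}_{k \geqslant 1}) \to (\B', \{\mathcal{R}^{\nabla'}_k\}_{k \geqslant 1}),
\]
whose tangent (first) component is $\phi^\vee = (f^\vee)^\vee$. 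Under the canonical identifications $\Omegag(V^\vee[-1])^\vee \cong \Omegag(V[1])$ and similarly for $V'$, this bidual is precisely the $\Omegag$-linear map $\Omegag(V[1]) \to \Omegag(V'[1])$ stemming from $f\colon \Omegag(V) \to \Omegag(V')$, as required.

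The essentially only obstacle is bookkeeping: one has to keep straight the repeated dualizings and the degree shift so that the tangent map $(f^\vee)^\vee$ is correctly matched with the map induced by $f$. No additional compatibility check between the two chosen naive $\delta$-connections is needed, since the morphism part of the Kapranov functor is defined for arbitrary choices of connections on $\B$ and $\B'$. With this identification in place, the higher components of $\Kap(\phi)$ are automatically supplied by functoriality and the assertion follows at once.
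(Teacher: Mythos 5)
Your proposal is correct and is exactly the argument the paper intends: it presents this corollary as an immediate consequence of Proposition \ref{prop1} (the contravariant dictionary $f\leftrightarrow f^\vee$ between morphisms of dg Loday-Pirashvili modules and morphisms of dg derivations) together with the contravariant functoriality of $\Kap$ stated after Proposition \ref{Thm: leibniz infty}, with the tangent map $(f^\vee)^\vee$ identified with the map induced by $f$ under reflexivity. Nothing is missing.
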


Finally, we state the following fact whose proof is analogous to that of Theorem \ref{Main Thm}.

\begin{Cor}
Maintain the same assumptions as in Theorem \ref{Main Thm}. Given any dg $\g$-module $W = (W^\bullet,d^W)$, the dg $\Omegag $-module $\E=(\Omegag( W ),d^W_\tot)$ admits a Leibniz$_\infty[1]$ $\Omegag $-module structure $  (\E,\{\mathcal{S} _k\}_{k\geqslant 1})$ over the Kapranov Leibniz$_\infty[1]$ $\Omegag  $-algebra $\Kap(\delta_\alpha) = (\B,\{\mathcal{R}^{\nabla}_k\}_{k\geqslant 1})$ described by Theorem \ref{Main Thm}.
The structure maps $\{\mathcal{S} _k\}_{k\geqslant 1}$ are given as follows:
\begin{compactenum}
	\item The unary action $\mathcal{S} _1$ is the   total differential $d^{W}_\tot =\dCE ^{W} +d^{W} $ of the dg $\g$-module $W$.
	\item The binary action $\mathcal{S} _2$ is the   $\delta_\alpha$-twisted Atiyah cocycle of $\E$ described by Proposition \ref{Prop:Atiyahcocyclefromdelta}. In other words, it is  $\Omegag  $-bilinear and extended from the degree $(+1)$ map
		\begin{align*}
			\mathcal{S} _2\colon V[1] {\otimes  }W &\rightarrow \Omegag(  W),\\
			v[1]\otimes w & \mapsto (-1)^{\abs{v}-1}   \alpha (v) \triangleright w,\qquad \forall v \in V, w\in W.
		\end{align*}
	\item For all $n\geqslant  2$, the $( n+1)$-action $\mathcal{S} _{n+1}$ is $\Omegag  $-multilinear and extended from the degree $(+1)$ map
         \begin{align*}
			\mathcal{S} _{n+1}\colon (V[1])^{\otimes n }\otimes W &\rightarrow \Omegag(  W )
		\end{align*}
		defined by
          \[
			\mathcal{S} _{n+1}(v_0[1],\cdots, v_{n-1 }[1],w)=(-1)^{\abs{v_0}+\cdots+\abs{v_{n-1}}-n}\bigl( \iota_{\alpha(v_0)}\circ  \iota_{\alpha(v_1)} \circ \cdots
              \circ \iota_{\alpha(v_{n-2})}\alpha(v_{n-1})\bigr)\triangleright w,
		\]
    for all $v_i\in V $, $w\in W$.
	\end{compactenum}
\end{Cor}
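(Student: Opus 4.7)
The plan is to imitate the proof of Theorem \ref{Main Thm} verbatim, substituting the dg $\Omegag$-module $\E=(\Omegag(W),d^W_\tot)$ for $\B=\Omegag(V[1])$ throughout. The relevant general fact, stated at the end of Section \ref{Sec:KapranovLeibnizetc} just before we specialised to dg \LP modules, is that the Kapranov functor extends from dg derivations to dg $\A$-modules: given a dg derivation $\A\xrightarrow{\delta}\Omega$, a $\delta$-connection on $\B=\Omega^\vee$, and a dg $\A$-module $\E$ equipped with a $\delta$-connection, one obtains a Leibniz$_\infty[1]$ $\A$-module structure on $\E$ over $\Kap(\delta)$, whose higher actions are built from nested commutators of the connection with $\partial_\A$ in exactly the same recursive manner as the higher brackets on $\B$.

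With this machinery in hand, I would first specialise to $\A=\Omegag$, $\delta=\delta_\alpha$, and introduce the most naive $\delta_\alpha$-connection $\nabla^\E$ on $\E=\Omegag(W)$, namely
\[
\nabla^\E_{v[1]}(\omega\otimes w)=(\iota_{\alpha(v)}\omega)\otimes w=\iota_{\alpha(v)}(\omega\otimes w),\qquad v\in V,\ \omega\in\Omegag,\ w\in W,
\]
entirely parallel to \eqref{Eqt:naivedeltaconnection2}. Then $\mathcal{S}_1$ is by definition the differential of the dg $\Omegag$-module $\E$, which is $d^W_\tot$, and $\mathcal{S}_2$ is by definition the $\delta_\alpha$-twisted Atiyah cocycle of $\E$ with respect to $\nabla^\E$. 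The explicit expression for $\mathcal{S}_2$ is already supplied by Proposition \ref{Prop:Atiyahcocyclefromdelta}, which computed this Atiyah cocycle with exactly the formula $v[1]\otimes w\mapsto(-1)^{|v|-1}\alpha(v)\triangleright w$.

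For the higher actions, the general recipe reads
\[
\mathcal{S}_{n+1}(v_0[1],\dots,v_{n-1}[1],w)=(-1)^{|v_0|-1}\nabla^\E_{v_0[1]}\bigl(\mathcal{S}_n(v_1[1],\dots,v_{n-1}[1],w)\bigr)-(\text{correction terms}),
\]
where the correction terms, as in \eqref{Rnabla}, involve $\nabla^\E_{v_0[1]}$ applied to the remaining arguments $v_i[1]$ and $w$. The decisive observation, which is the only place where a calculation is needed, is that with the naive connection these corrections vanish: by construction $\nabla^\E_{v_0[1]}v_i[1]=0$ and $\nabla^\E_{v_0[1]}w=\iota_{\alpha(v_0)}(1\otimes w)=0$ since $\alpha(v_0)\in\wedge^{|v_0|}\g^\vee\otimes\g$ is contracted against $1\in\Omegag$. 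Consequently the recursion collapses to
\[
\mathcal{S}_{n+1}(v_0[1],\dots,v_{n-1}[1],w)=(-1)^{|v_0|-1}\iota_{\alpha(v_0)}\,\mathcal{S}_n(v_1[1],\dots,v_{n-1}[1],w),
\]
and a straightforward induction on $n$, starting from the formula already known for $\mathcal{S}_2$, produces the claimed closed form \eqref{Eqt:Rnexplicit1}-analogue with the nested $\iota_{\alpha(v_i)}$'s acting on $\alpha(v_{n-1})$ and finally acting on $w$ via $\triangleright$.

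The Leibniz$_\infty[1]$-module identities themselves need no separate verification here, as they are guaranteed by the general module version of Proposition \ref{Thm: leibniz infty}; the only substantive content of the corollary is the \emph{explicit} form of $\mathcal{S}_{n+1}$. The main (and really the only) obstacle is the bookkeeping with signs and with the interplay between the shift $[1]$ on $V$ and the unshifted $W$. I expect no conceptual difficulty beyond carefully repeating the inductive step that was used in Theorem \ref{Main Thm}, with the understanding that the final slot now carries an element $w\in W$ rather than $v_n[1]\in V[1]$, which is precisely why the nested contraction terminates at $\alpha(v_{n-1})\triangleright w$ instead of $\alpha(v_{n-1})\triangleright v_n[1]$.
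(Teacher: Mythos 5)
Your proposal is correct and follows exactly the route the paper intends: the paper gives no written proof, stating only that it is ``analogous to that of Theorem \ref{Main Thm}'', and your argument is precisely that analogy made explicit --- the naive $\delta_\alpha$-connection on $\E=\Omegag(W)$, the identification of $\mathcal{S}_1$ and $\mathcal{S}_2$ via Proposition \ref{Prop:Atiyahcocyclefromdelta}, and the collapse of the recursion because $\nabla^\E_{v_0[1]}v_i[1]=0$ and $\nabla^\E_{v_0[1]}(1\otimes w)=\iota_{\alpha(v_0)}(1)\otimes w=0$. The additional check on the last slot ($w$ in place of $v_n[1]$) is the only point not literally present in the proof of Theorem \ref{Main Thm}, and you handle it correctly.
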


\section{Application to Lie algebra pairs}~\label{Sec:appLiepair}
In this section, we extend our discussion to the construction of a specific type of dg  \LP  modules derived from Lie algebra pairs (i.e. inclusions of Lie algebras). We aim to apply the comprehensive theory and methodology detailed in previous sections to accomplish two objectives. Firstly, we focus on calculating in detail the twisted Atiyah class associated with the dg  \LP modules. Additionally, we investigate the Leibniz$_\infty[1]$ algebra structures that emerge from this construction.

 \subsection{Dg \LP modules arising from Lie algebra pairs}~\label{Ex: Lie pair2}
 Given a Lie algebra pair $(\T,\g)$, i.e., two Lie algebras $\T$ and $\g$ such that $\g \subset \T$ is a Lie subalgebra, there is a short exact sequence of vector spaces
 \begin{equation}\label{SES of VS in Lie pair}
 0 \rightarrow \g \xrightarrow{i} \T \xrightarrow{\pr_{\T/\g}} \T/\g \rightarrow 0.
 \end{equation}
 Both vector spaces $\T$ and $\T/\g$ admit $\g$-module structures defined respectively by
 \begin{align*}
 	& x \triangleright m := [x,m]_\T, ~\\ \mbox{ and }~& x \triangleright \pr_{\T/\g}(m)  := \pr_{\T/\g}([x,m]_\T),
 \end{align*}
 for all $x \in \g, m \in \T$. The said $\g$-action on $\T$ is the adjoint action, and the action on $\T/\g$ is canonical.

We form a 2-term cochain complex $V = ( V^{0}\xrightarrow{d^V} V^1)$, where
\[
V^{0}=\T  , \quad V^1=\T/\g,
\]
and the differential $d^V$ is the projection map $\pr_{\T/\g}$. It is easy to see that $V$ is a dg representation of $\g$.

Fix a splitting of Sequence~\eqref{SES of VS in Lie pair}, namely a pair of linear maps $j\colon \T/\g \rightarrow \T$ and $\pr_\g\colon \T \rightarrow \g$ that satisfy
 \begin{align*}
 	\pr_\g \circ i  = \id_\g, &\qquad \pr_{\T/\g} \circ j = \id_{\T/\g},  ~\mbox{ and }~ i \circ \pr_\g + j \circ \pr_{\T/\g}  = \id_\T.
 \end{align*}
So we can decompose $\T  \cong \g  \oplus  \T/\g  $ as a vector space, and an element in $\T$ is written as $m=(m_x, m_b)$, where $m_x\in \g$, $m_b\in \T/\g$.
Via this decomposition, $\T/\g$ is considered as a sub vector space in $\T$ (but not as a sub $\g$-module).

We define two linear maps
\[
\alpha_0 :=\pr_\g \colon \T= V^{0} \rightarrow \g,\quad \mbox{i.e.,} \quad \alpha_0(m_x,m_b) = m_x,
\]
and
\[
\alpha_1\colon V^1 \otimes  \g  \rightarrow \g,\qquad \mbox{such that } \alpha_1(b \mid x )  = \pr_\g([ j(b),  i(x) ]_\T), ~\forall~ b\in \T/\g, x\in\g.
\]
Let us verify that these two maps give rise to a {dg \LP module} structure on $V$ over $\g$.
It suffices to examine Equation \eqref{equation of f} for $k=0$ and $k=1$.
For the $k=0$ case, we take elements $m = (m_x, m_b)\in\T=V^{0}, x  \in \g$, and compute
 \begin{align*}
 	&\quad \alpha_{1}(d_{V}(m) \mid x) - \alpha_0(x \triangleright m)+[x,\alpha_0(m)]_\g \\
 	&=\alpha_{1}(m_b\mid x )-\alpha_0([x ,m]_\T )+[x ,m_x]_\g \\
 	&=\pr_\g([m_b,x]_\T)-([x ,m_x]_\g + \pr_\g([x ,m_b]_\T))  + [x ,m_x]_\g = 0.
 \end{align*}
The $k=1$ case is similar because it amounts to check the following identity: for all $x_1,x_2\in \g$ and $b\in \T/\g=V^1$,
 \begin{align*}
 	\alpha_{1}\bigl(x_{2} \triangleright b \mid x_{1}  \bigr)-\alpha_{1}\bigl(x_{1} \triangleright b| x_{2} \bigr)-\alpha_1(b \mid [x_1,x_2]_\g)
 	&= \left[x_{2}, \alpha_{1}\bigl(b | x_{1} \bigr)\right]-[x_{1}, \alpha_{1}\bigl(b \mid x_{2} \bigr)],
 \end{align*}
for which we omit the details of verification.

In summary, we obtain a {dg \LP module} $(V,\alpha) = ( V^{0}\xrightarrow{d^V} V^1, \alpha)$ with $\alpha=\{\alpha_0,\alpha_1\}$ as above when a splitting $(j,\pr_\g)$ of Sequence~\eqref{SES of VS in Lie pair} is given.

Suppose that $(j^\prime,\pr^\prime_\g)$ is another splitting of Sequence~\eqref{SES of VS in Lie pair}.
Note that the difference between the two splittings defines a linear map $h_1= j-  j^\prime :~ V^1=\T/\g \to \g$. And accordingly, we have for all $m\in \T$,
\begin{eqnarray*}
 	\pr^\prime_\g(m)&=&m-j^\prime\circ \pr_{\T/\g}(m)=m-j \circ \pr_{\T/\g}(m)+h_1\circ \pr_{\T/\g}(m)\\
 	&=& \pr_\g(m)+h_1\circ \pr_{\T/\g}(m).
\end{eqnarray*}
Thus the {dg \LP module} structure maps corresponding to $(j^\prime, \pr^\prime_\g)$ are
\[
\alpha_0^\prime = \pr^\prime_\g=\pr_\g +h_1\circ \pr_{\T/\g}=\alpha_0+h_1\circ d^V,
\]
and
\begin{eqnarray*}
 	\alpha_1^\prime(b\mid x) &=& \pr_\g^\prime([j^\prime(b),i(x)]_\T)=\pr_\g ([j (b)-h_1(b),i(x)]_\T)+h_1\circ \pr_{\T/\g} ([j (b) ,i(x)]_\T)\\
 	&=& \alpha_1 (b\mid x)+[x,h_1(b)] - h_1(x\triangleright b).
\end{eqnarray*}
These relations evidently show that $h=\{h_1\}$ defines a homotopy between $\alpha=\{\alpha_0,\alpha_1\}$ and $\alpha^\prime=\{\alpha^\prime_0,\alpha^\prime_1\}$ by Proposition \ref{Prop:homotopyinh}.

We summarize the above results in the following proposition.
\begin{prop}
The above construction defines a \textit{canonical} {dg \LP module} class $(V, [\alpha]=[\alpha_0,\alpha_1])$ arising from a given Lie algebra pair $(\T,\g)$.
Moreover, it is sent by the functor $\H^0$ to the obvious \LP module
\[
\H^0(V) \xrightarrow{\H^0(\alpha)} \g,
\]
where $\H^0(V)=\ker (V^{0}\xrightarrow{d_0^V} V^1)=\g$ and $\H^0(\alpha)$ is the identity map on $\g$.
\end{prop}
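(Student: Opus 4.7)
The plan is to assemble the three ingredients that have already been developed in the discussion preceding the statement. First, for a chosen splitting $(j,\pr_\g)$ of Sequence~\eqref{SES of VS in Lie pair}, I would verify that the pair of maps $\alpha_0=\pr_\g$ and $\alpha_1(b\mid x)=\pr_\g([j(b),i(x)]_\T)$ satisfies the cocycle condition~\eqref{equation of f} for all relevant indices. Since $V$ is concentrated in degrees $0$ and $1$, the integer $u$ from~\eqref{Eqt:u} is at most $1$, so only the cases $k=0$ and $k=1$ matter. The $k=0$ case was already checked in the paragraph above the statement. For $k=1$, I would unfold the identity
\[
\alpha_1(x_2\triangleright b\mid x_1)-\alpha_1(x_1\triangleright b\mid x_2)-\alpha_1(b\mid [x_1,x_2]_\g)=[x_2,\alpha_1(b\mid x_1)]_\g-[x_1,\alpha_1(b\mid x_2)]_\g,
\]
and verify it by expanding each term via the definitions of $\alpha_1$ and the Bott action, then invoking the Jacobi identity for the triple $j(b),i(x_1),i(x_2)$ in $\T$. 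The only slightly delicate point is tracking the $\T/\g$-part contributions: they appear symmetrically on both sides because $\pr_\g([i(x),-]_\T)=[x,\pr_\g(-)]_\g$ modulo terms that cancel after antisymmetrization in $x_1,x_2$.

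Next, to see that the resulting class $[\alpha]$ is independent of the chosen splitting, I would invoke the explicit formulas already derived immediately before the statement. For two splittings $(j,\pr_\g)$ and $(j',\pr'_\g)$, set $h_1:=j-j'\colon V^1\to \g$; the excerpt establishes
\[
\alpha'_0-\alpha_0=h_1\circ d^V,\qquad \alpha'_1(b\mid x)-\alpha_1(b\mid x)=[x,h_1(b)]_\g-h_1(x\triangleright b).
\]
By Proposition~\ref{Prop:homotopyinh} (with $h_k=0$ for $k\neq 1$), these identities say exactly that $h=\{h_1\}$ is a homotopy between $\alpha$ and $\alpha'$, so $[\alpha]=[\alpha']$. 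Hence the dg \LP class $(V,[\alpha])$ depends only on the pair $(\T,\g)$.

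Finally, I would compute the image under $\H^0$: the differential $d_0^V=\pr_{\T/\g}$ has kernel exactly $\g\subset\T$, so $\H^0(V)=\ker(d_0^V)=\g$, and the inherited $\g$-action is the adjoint action. The induced map $\H^0(\alpha)$ is the restriction $\alpha_0|_\g=\pr_\g\circ i=\id_\g$ by definition of the splitting, which is manifestly a morphism of $\g$-modules and identifies $\H^0(V)\xrightarrow{\H^0(\alpha)}\g$ with the identity \LP module on $\g$. The only real obstacle is the $k=1$ bracket-Jacobi calculation, but it is routine; every other step is either a direct invocation of Proposition~\ref{Prop:homotopyinh} or an immediate unravelling of definitions.
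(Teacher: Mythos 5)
Your proposal is correct and follows essentially the same route as the paper: verify the cocycle condition \eqref{equation of f} for $k=0$ and $k=1$ directly (the paper likewise reduces $k=1$ to the Jacobi identity and omits the details), establish splitting-independence via $h_1=j-j'$ and Proposition~\ref{Prop:homotopyinh}, and read off $\H^0(V)=\ker(\pr_{\T/\g})=\g$ with $\H^0(\alpha)=\pr_\g\circ i=\id_\g$.
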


We now provide a concrete example of Lie algebra pairs.
\begin{Ex}~\label{Ex:sl2}
 Consider $\T =\mathfrak{sl}_2(\mathbb{C})$ with the standard basis
 	\begin{align*}
 	h &= \Bigl(
 	\begin{array}{cc}
 	1 & 0 \\
 	0 & -1 \\
 	\end{array}
 	\Bigr), &    e &= \Bigl(
 	\begin{array}{cc}
 	0 & 1 \\
 	0 & 0 \\
 	\end{array}
 	\Bigr), &  f &= \Bigl(
 	\begin{array}{cc}
 	0 & 0 \\
 	1 & 0 \\
 	\end{array}
 	\Bigr)
 	\end{align*}
which is subject to the relations
 	\begin{align*}
 	[e,f] &= h, & [h,e] &= 2e, & [h,f] &= -2f.
 	\end{align*}
Let $\g $ be the Lie subalgebra $\g  = \mathrm{span}_{\C}\{h,e\} \subset \T =\mathfrak{sl}_2(\mathbb{C})$.
Then $V^1 = \T /\g $ is a one-dimensional vector space generated by $b$, the image of $f$ by projection. The $\g $-module structure on $V^1$ is given by
 	\begin{align*}
 	e \triangleright b  &= 0,  \quad\quad\quad \quad\quad\quad\mbox{and} &      h \triangleright b &= -2b.
 	\end{align*}
 The associated dg \LP module structure map $\alpha$ on the dg representation $V = \T  \oplus \T /\g $ of $\g $ is given by two linear maps ${\alpha_0}\colon \T  \rightarrow \g  $ and ${\alpha_1} \colon (\T /\g)  \otimes \g   \rightarrow \g  $ such that
 	\begin{align*}
 	{\alpha_0}(e ) = e ,  & \qquad\qquad {\alpha_0}(h ) = h ,  \qquad\qquad {\alpha_0}(f ) = 0,  \\
 	{\alpha_1}(b|e)  := \pr_\g ([f,e]) = -h,  & \qquad\mbox{ and }   \qquad  {\alpha_1}(b|h)   := \pr_\g ([f,h]) = 0.
 	\end{align*}
 \end{Ex}
Of course, one may come up with similar examples from $\mathfrak{sl}_n(\mathbb{C})$ and general semi-simple Lie algebras.

\subsection{Twisted Atiyah cocycles and Kapranov sh Leibniz algebras of Lie algebra pairs}
Let $(\T ,\g )$ be a Lie algebra pair.
We fix a splitting so that we treat $\T  = \g  \oplus \T /\g $ directly.
Consider the dg \LP module $(V,\alpha)$ discussed in the previous Section \ref{Ex: Lie pair2}.
For convenience, we introduce the following three maps:
\begin{align*}
\nabla \colon &\g \times \T/\g \to \T/\g, & \nabla_x b &:=\pr_{\T/\g} [x,b]_{\T},\\
\Delta \colon & \T/\g \times \g \to \g, & \Delta_b x &:=\pr_{ \g} [b,x ]_{\T},  \\
\triangleright \colon & \g \times \T \to \T, &  x\triangleright m &:= [x,m]_{\T},
\end{align*}
for all $m \in \T, b \in \T/\g $, and $x\in \g$.
The twisted Atiyah cocycle  $\mathcal{R}^{\nabla}_2\colon  V[1] {\otimes  }V[1]  \rightarrow \Omegag(V[1]) $ has the following terms:
\begin{compactenum}
 \item $V^{0}[1]\otimes V^{0}[1] \rightarrow  V^{0}[1]$-part   given by
	\[
(m_1[1],m_2[1])\mapsto \mathcal{R}^{\nabla}_2(m_1[1],m_2[1])=-[ \pr_{ \g }m_1,m_2]_{\T}[1]=-( \pr_{ \g }m_1)\triangleright m_2[1];
\]
 \item the $V^{0}[1]\otimes V^{1}[1] \rightarrow  V^{1}[1]$~-part   given by
	\[
(m_1[1],b_2[1])\mapsto \mathcal{R}^{\nabla}_2(m_1[1],b_2[1])=-\pr_{\T/\g}[ \pr_{ \g }m_1,b_2]_{\T}[1]=-\nabla_{\pr_{ \g }m_1}(b_2)[1];
\]
 \item the $V^{1}[1]\otimes V^{0}[1] \rightarrow  \g^\vee\otimes V^{0}[1]$-part   given by
	\[
(b_1[1],m_2[1])\mapsto \mathcal{R}^{\nabla}_2(b_1[1],m_2[1]),
\]
such that
  \[
  \iota_x \mathcal{R}^{\nabla}_2(b_1[1],m_2[1])= [ \pr_{ \g }[b_1,x]_\T ,m_2]_{\T}[1]=
      ( \Delta_{b_1}x)\triangleright m_2[1];
  \]
  \item  the $V^{1}[1]\otimes V^{1}[1] \rightarrow  \g^\vee\otimes V^{1}[1]$-part   given by
	\[
(b_1[1],b_2[1])\mapsto \mathcal{R}^{\nabla}_2(b_1[1],b_2[1]),
\]
such that
\[
 \iota_x \mathcal{R}^{\nabla}_2(b_1[1],b_2[1])= \pr_{\T/\g}[ \pr_{\g} [b_1,x]_\T ,b_2]_{\T}[1]=\nabla_{( \Delta_{ b_1} x)}b_2[1],
 \]
for all $m_1, m_2 \in \T=V^0, b_1, b_2\in \T/\g=V^1$, and all $x\in \g$.
\end{compactenum}
	
\begin{Rem}\label{Rem:originalandtwisted}
Compared with the original Atiyah cocycle of a Lie pair introduced by Calaque, C\u{a}ld\u{a}raru, and Tu \cite{CCT} (see also Chen-Sti\'{e}non-Xu's \cite{CSX}),
we see that the twisted Atiyah cocycle we obtain, i.e., $\mathcal{R}^{\nabla}_2$, has three more terms (1) $\sim$ (3).
Only its fourth component coincides with the original one.
In particular, if our Lie pair arises from a Lie bialgebra as in the setting of \cites{Hong},  the fourth component that we obtain recovers the computation done in~\emph{op.cit.}.
\end{Rem}	
We then consider the Kapranov Leibniz$_\infty[1]$-algebra given by Theorem~\ref{Main Thm} arising from the dg \LP module $(V,\alpha)$.
The unary bracket is standard. The binary bracket is already given by the above $\mathcal{R}^\nabla_2$.
By Equation \eqref{Eqt:Rnexplicit1}, we can also work out higher brackets $\mathcal{R}^{\nabla}_{n}$ for $n\geqslant3$.
 {For example, since $\mathcal{R}^{\nabla}_3$ is $\Omegag$-multilinear, it can be determined by extending   the following  maps:}
\begin{compactenum}
 \item the $V^{0}[1]\otimes V^{1}[1]\otimes V^{0}[1] \rightarrow    V^{0}[1]$-part   given by
  	\[
	(m_0[1],b_1[1],m_2[1])\mapsto \mathcal{R}^{\nabla}_3(m_0[1],b_1[1],m_2[1]) =   ( \Delta_{b_1}(\pr_\g m_0))\triangleright m_2[1];
	\]
 \item  the $V^{0}[1]\otimes V^{1}[1]\otimes V^{1}[1] \rightarrow    V^{1}[1]$-part   given by
  	\[
  (m_0[1],b_1[1],b_2[1])\mapsto  \mathcal{R}^{\nabla}_3(m_0[1],b_1[1],b_2[1])=  \nabla_{ \Delta_{ b_1} (\pr_\g m_0) }b_2[1];
  \]
 \item the $V^{1}[1]\otimes V^{1}[1]\otimes V^{0}[1] \rightarrow  \g^\vee\otimes V^{0}[1]$-part   given by
  	\[
  (b_0[1],b_1[1],m_2[1])\mapsto \mathcal{R}^{\nabla}_3(b_0[1],b_1[1],m_2[1])
  \]
  such that
   \[
   \iota_x \mathcal{R}^{\nabla}_3(b_0[1],b_1[1],m_2[1])=-( \Delta_{b_1}\Delta_{b_0}x)\triangleright m_2[1];
   \]
 \item  the $V^{1}[1]\otimes V^{1}[1]\otimes V^{1}[1] \rightarrow  \g^\vee\otimes V^{1}[1]$-part   given by
  \[
  (b_0[1],b_1[1],b_2[1])\mapsto \mathcal{R}^{\nabla}_3(b_0[1],b_1[1],b_2[1]),
  \]
 such that
  \[
  \iota_x \mathcal{R}^{\nabla}_3(b_0[1],b_1[1],b_2[1])=- \nabla_{( \Delta_{ b_1}\Delta_{ b_0} x)}b_2[1].
  \]
 \end{compactenum}
The interested reader will find that expressions of higher-order brackets $\mathcal{R}^\nabla_n$ (for $n\geqslant  4$) are completely similar to those of the    $n=3$ case.

We finally give the Kapranov Leibniz$_\infty[1]$-algebra arising from the dg \LP module $(V,\alpha)$ of Example \ref{Ex:sl2}.
\begin{Ex}
We use the same notations as introduced in Example \ref{Ex:sl2}. The dg representation $V = \T  \oplus \T /\g $ of $\g$ has basis $\{h,e,f\}$ for $\T$ (concentrated in degree $0$) and $\{b\}$ for $\T/\g$ (concentrated in degree $1$). Denote by $\{h^\vee,e^\vee\}$ the basis of $\g^\vee$ in duality to $\{h,e\}$ of $\g$.
With these vectors, the graded space $C^\bullet(\g ,V)$ is given by the exterior algebra generated by $\{h^\vee, e^\vee\}$ tensoring with $\{h,e,f,b\}$. 	
Let us write the Leibniz$_\infty[1]$-algebra structure maps on $C^\bullet(\g ,V)$ explicitly by giving their formulas on generators.
\begin{enumerate}
\item The unary map is the differential $d_\tot^V$ sending $h^\vee$ to $0$, $e^\vee$ to $(-2 h^\vee\wedge e^\vee)$, $h$ to $(-2 e^\vee\otimes e)$, $e$ to $2 h^\vee \otimes e$, $f$ to $(-2 h^\vee\otimes f+e^\vee\otimes h+b)$, and $b$ to $(-2h^\vee\otimes b)$.
\item For the binary bracket, we have
	\begin{align*}
				\mathcal{R}^{\nabla}_2(h,e)=-2e,\quad \mathcal{R}^{\nabla}_2(h,f)= 2f,\quad
				\mathcal{R}^{\nabla}_2(e,f)=-h,\quad
				\mathcal{R}^{\nabla}_2(f,e)=0,\quad\\
				\mathcal{R}^{\nabla}_2(h,b)=2b,\quad \mathcal{R}^{\nabla}_2(e,b)=0,\quad
				\mathcal{R}^{\nabla}_2(b,h)=0,\quad
				\mathcal{R}^{\nabla}_2(b,e)=-2e^\vee\otimes e,\quad\\
				\mathcal{R}^{\nabla}_2(b,f)=2e^\vee\otimes f,\quad\mathcal{R}^{\nabla}_2(b,b)=2e^\vee\otimes b.\qquad\qquad\qquad\qquad
	\end{align*}
\item The only nontrivial trinary bracket is given by:
			\[
\mathcal{R}^{\nabla}_3(e,b,e)=-2e,\mbox{ ~and~  }~\mathcal{R}^{\nabla}_3(e,b,f)=2f.
\]
			All other generating relations of $\mathcal{R}^{\nabla}_3(\cdot,\cdot,\cdot)$ are zero.
 \item All higher $\mathcal{R}^{\nabla}_{n}$  (for   $n\geqslant  4$) operations vanish.
		
\end{enumerate}
\end{Ex}

\appendix
\section{Proof of Theorem~\ref{Thm:basicdglift}}
Let us assume simply that $E=\H^0(V)$ and $F= \H^0(W)$.
We first prove the existence of the lift $f$, or $f_k \colon V^k\to \oplus_{l+s=k}\wedge^l \g^\vee\otimes W^s$, $k= 0, 1, \cdots,\mathrm{top}$.
Our objective is to construct these  $f_k$ so that they satisfy Equation \eqref{mor-dg}, or  	 	
\begin{equation}~\label{cons-lift}
f_{k+1}\circ d^V_k=-f_k\circ d_\CE^V+(d^W+d^W_\CE) \circ f_k, \quad k\geqslant  0. 		
\end{equation}
We will show the existence of $f_k$ by induction on $k$. The approach is as follows. If $f_k$ is given, then we can use Equation \eqref{cons-lift} to recursively define $f_{k+1}$ on
$\img  (d^V_k)\subset V^{k+1}$, provided that the right hand side of Equation \eqref{cons-lift} vanishes on $\ker (d^V_k)$. We will prove this fact and then define $f_{k+1}$ on $V^{k+1}$ by arbitrary extension (based on the assumption that every $V^{\bullet}$ is finite-dimensional).
	
Let $f_0 \colon V^0 \to W^0$ be an arbitrary extension of $\varphi $. To define $f_1 \colon V^1\to W^1\oplus \g^\vee \otimes W^0$, we must show that the right hand side of Equation \eqref{cons-lift} vanishes on $\ker (d^V_0)$, i.e.,
\[
 d^W_0\circ f_0^0(v)=0,
 \]
and
\[
 d^W_{\CE}\circ f_0^0(v)-f_0^0\circ \dCE ^V(v)=0,
 \]
for $v\in \ker (d_0^V)$. These identities are easy to check since $f_0^0$ restricts to the $\g$-module morphism $\varphi \colon \H^0(V)\to \H^0(W)$.
	
Now suppose that $f_0$, $f_1,\cdots, f_{n-1},f_{n}$ are defined for some $n\geqslant 1$, so that Equation \eqref{mor-dg} holds for all $k=1,\cdots, n-1$. Let us show that the right hand side of Equation \eqref{cons-lift} vanishes on $\ker (d^V_n)$, and then one defines $f_{n+1}$ accordingly.
We need the assumption that $(V, d^V)$ is acyclic, and hence we have $ \ker (d^V_n)=\img  (d^V_{n-1})$ in particular.
So for $v\in \ker (d^V_n)$, one can write $v=d^V_{n-1}(v_0)$, for some $v_0\in V^{n-1}$. Applying to $v$ the right hand side of Equation \eqref{cons-lift} where $k=n$, we have
\begin{eqnarray*}
&&-f_{n} \circ d_\CE^V (v) +d^W_\tot \circ f_n (v)\\
&=& -f_n \circ d_\CE^V\circ d^V(v_0) +d^W_\tot \circ f_n\circ d^V_{n-1} (v_0)\\
&=& f_n \circ d^V\circ d_\CE^V  (v_0)\\
&&\quad\quad +d^W_\tot \circ (
d^W_\tot \circ f_{n-1}-f_{n-1}\circ d^V_{\CE})(v_0)\qquad \mbox{by Eq.\eqref{atc} and the induction assumption}; \\
&=& (f_n\circ d^V+f_{n-1}\circ d^V_{\CE}-d^W_\tot \circ f_{n-1})\circ d^V_{\CE} (v_0)\\
&=& 0,\qquad \mbox{by   the induction assumption}.
\end{eqnarray*}
The statement of existence is thus justified.
	
We then prove that, if $f$ and $f' \colon V\rightsquigarrow W$ are both dg lifts of $\varphi$, then they are homotopic. It suffices to prove that, if $f$ is a dg lift of the trivial morphism $\varphi=0\colon  \H^0(V)\to \H^0(W)$ (i.e., $f|_{\H^0(V)=\ker(d^V_0) }=0$), then $f$ is homotopic to zero, i.e., there exists a degree $(-1)$ and $\Omegag $-linear map $h: \Omegag( V)\to \Omegag( W)$ such that
\begin{equation}~\label{Eqt:fh}
f  =   d^W_{\tot} \circ h + h \circ d^V_{\tot} \colon \Omegag( V)\to \Omegag( W).
\end{equation}
Let us assume that $h$ is generated by a family of linear maps
\[
h_k\colon V^k\to \oplus_{l+s=k-1}\wedge^l \g^\vee\otimes W^s, \quad k=1,2,\cdots,
\]
with $h_0=0$, and each $h_k$ extended to a map $\Omegag( V^k)\to \Omegag( W^{s})$, by setting
\[
 h_k(\omega \otimes v)=(-1)^p\omega\otimes h_k(v), \quad \omega\in \wedge^p \g^\vee,v\in V^k.
\]	
Then Equation \eqref{Eqt:fh} becomes the relation
\begin{equation}~\label{homotopy-k}
h_{k+1}\circ d^V_k=f_k-d^W_\tot\circ h_k-h_k\circ d^V_{\CE} \colon V^k\to \oplus_{l+s=k}\wedge^l \g^\vee\otimes W^s,
\end{equation}	
for all $k \geq 0$.
Again we use induction to construct these $h_k$s. The initial $h_0=0$ is given. The condition for $h_1$ is
\[
h_1\circ d^V_0=f_0.
\]
The construction of $h_1$ that meets this condition is clear because $f_0$ vanishes on $\ker d^V_0$.
	
Suppose that $h_0,h_1,\cdots, h_{n}$ ($n\geqslant  1$) are found and they all satisfy Equation \eqref{homotopy-k} for $k=0,1,\cdots, n-1$.
We then use Equation \eqref{homotopy-k} to define $h_{n+1}$ on $\img d^V_n$. Of course, one needs to examine that the right hand side of Equation \eqref{homotopy-k}, for $k=n$, yields zero when it is applied to $\ker d^V_n$. This is verified below.
	
For all  $v\in \ker (d^V_n)$, again by $(V,d^V)$ being acyclic, one can write $v=d^V_{n-1}(v_0)$, for some $v_0\in V^{n-1}$. Thus, we can compute
\begin{eqnarray*}
&& f_n(v)-d^W_\tot\circ h_n(v)-h_n\circ d^V_{\CE}(v)\\
&=& f_n\circ d^V_{n-1}(v_0) -d^W_\tot\circ h_n \circ d^V_{n-1}(v_0)-h_n\circ d^V_{\CE}\circ d^V_{n-1}(v_0)\\
&=& f_n\circ d^V_{n-1}(v_0) -d^W_\tot\circ h_n \circ d^V_{n-1}(v_0)+h_n\circ   d^V_{n-1}\circ d^V_{\CE}(v_0)\\
&=& f_n\circ d^V_{n-1}(v_0)-d^W_\tot\circ\circ (f_{n-1} -d^W_\tot\circ h_{n-1} -h_{n-1}\circ d^V_{\CE} )(v_0)\\
&&\qquad+(f_{n-1} -d^W_\tot\circ h_{n-1} -h_{n-1}\circ d^V_{\CE} )\circ  d^V_{\CE}(v_0)\\
&=& f_n\circ d^V_{n-1}(v_0)-d^W_\tot\circ f_{n-1}(v_0)+ f_{n-1}\circ d^V_{\CE}(v_0)\\
&=& f\circ d^V_\tot(v_0)-d^W_\tot\circ f_{n-1}(v_0)=0.
\end{eqnarray*}
Here we have used the induction assumption, and the fact that $f \colon V\rightsquigarrow W$ is a morphism of dg $\g$-modules.
	
This proves that $h_{n+1}|_{\img d^V_{n}}$ is well defined, and then we define $h_{n+1}$ by an arbitrary extension to $V^{n+1}$. By induction, we complete the proof.

{\bf Statements and Declarations}

{\bf Conflict of interest:} We hereby declare that this work has no related financial or non-financial conflict of interests.

\smallskip

{\bf Research Funds:} The research was partially supported by National Natural Science Foundation of China grants 12071241 (Chen), 11971282 (Qiao), 11901221 (Xiang), and 11501179 (Zhang).

\begin{bibdiv}
	\begin{biblist}
		\bib{Aguiar}{article}{
			author={Aguiar, M.},
			title={Pre-Poisson algebras},
			journal={Lett. Math. Phys.},
			volume={54},
			date={2000},
			number={4},
			pages={263--277},
			issn={0377-9017},
		}

		\bib{AC1}{article}{
			author={Arias Abad, C.},
			author={Crainic, M.},
			title={Representations up to homotopy of Lie algebroids},
			journal={J. Reine Angew. Math.},
			volume={663},
			date={2012},
			pages={91--126},
		}

		\bib{AC}{article}{
			author={Arias Abad, C.},
			author={Crainic, M.},
			title={Representations up to homotopy and Bott's spectral sequence for Lie groupoids},
			journal={Adv.Math.},
			volume={248},
			date={2013},
			number={1},
			pages={416--452},
			issn={0001-8708},
		}
	
	\bib{ABRW}{article}{
		author={Alexandre, C.},
		author={Bordemann, M.},
		author={Rivi\`ere, S.},
		author={Wagemann, F.},
		title={Algebraic deformation quantization of Leibniz algebras},
		journal={Comm. Algebra},
		volume={46},
		date={2018},
		number={12},
		pages={5179--5201},
		issn={0092-7872},
	}

   \bib{AKSZ}{article}{
   author={Alexandrov, M.},
   author={Schwarz, A.},
   author={Zaboronsky, O.},
   author={Kontsevich, M.},
   title={The geometry of the master equation and topological quantum field
   theory},
   journal={Internat. J. Modern Phys. A},
   volume={12},
   date={1997},
   number={7},
   pages={1405--1429},
   issn={0217-751X},
}

		\bib{AP}{article}{
			author={Ammar, M.},
			author={Poncin, N.},
			title={Coalgebraic approach to the Loday infinity category, stem
				differential for $2n$-ary graded and homotopy algebras},
			journal={Ann. Inst. Fourier (Grenoble)},
			volume={60},
			date={2010},
			number={1},
			pages={355--387},
			issn={0373-0956},
		}

		\bib{Atiyah}{article}{
			author={Atiyah, M. F.},
			title={Complex analytic connections in fibre bundles},
			journal={Trans. Amer. Math. Soc.},
			volume={85},
			date={1957},
			pages={181--207},
		}


		\bib{Bottacin}{article}{
			author={Bottacin, F.},
			title={Atiyah classes of Lie algebroids},
			conference={
				title={Current trends in analysis and its applications},
			},
			book={
				series={Trends Math.},
				publisher={Birkh\"auser/Springer, Cham},
			},
			date={2015},
			pages={375--393},
		}

\bib{CCT}{article}{
        author={Calaque, D.},
       author={C\u{a}ld\u{a}raru, A.},
      author={Tu, J.},
      title={PBW for an inclusion of Lie algebras},
      journal={J. Algebra},
      volume={378},
      date={2013},
      pages={64--79},
      issn={0021-8693},
    }
		
		\bib{CV}{article}{
			author={Calaque, D.},
			author={Van den Bergh, M.},
			title={Hochschild cohomology and Atiyah classes},
			journal={Adv. Math.},
			volume={224},
			date={2010},
			number={5},
			pages={1839--1889},
			issn={0001-8708},
		}

		\bib{Cao}{book}{
        author={Cao, W.},
         title={An algebraic study of averaging operators},
         note={Thesis (Ph.D.)--Rutgers The State University of New Jersey -
         Newark},
         publisher={ProQuest LLC, Ann Arbor, MI},
         date={2000},
          pages={72},
        }

		\bib{CLX}{article}{
			author={Chen, Z.},
			author={Liu, Z.},
			author={Xiang, M.},
			title={Kapranov's construction of sh Leibniz algebras},
			journal={Homol. Homot. Appl.},
			volume={22},
			date={2020},
			number={1},
			pages={141--165},
		}
		
		\bib{CSX}{article}{
			author={Chen, Z.},
			author={Sti{\'e}non, M.},
			author={Xu, P.},
			title={From Atiyah classes to homotopy Leibniz algebras},
			journal={Comm. Math. Phys.},
			volume={341},
			date={2016},
			number={1},
			pages={309--349},
		}
		
    \bib{CFK}{article}{
   author={Ciocan-Fontanine, I.},
   author={Kapranov, M.},
   title={Virtual fundamental classes via dg-manifolds},
   journal={Geom. Topol.},
   volume={13},
   date={2009},
   number={3},
   pages={1779--1804},
   issn={1465-3060},
}

      \bib{Costello}{article}{
			author={Costello, K.},
			title={A geometric construction of the Witten genus II},
			eprint={1112.0816},
		}
		
		\bib{dWNS}{article}{
			author={de Wit, B.},
			author={Nicolai, H.},
			author={Samtleben, H.},
			title={Gauged supergravities, tensor hierarchies, and M-theory},
			journal={J. High Energy Phys.},
			date={2008},
			number={2},
			pages={Paper No. 044, 33pp},
			issn={1126-6708},
		}
		
		\bib{dWS}{article}{
			author={de Wit, B.},
			author={Samtleben, H.},
			title={Gauged maximal supergravities and hierarchies of nonabelian
				vector-tensor systems},
			journal={Fortschr. Phys.},
			volume={53},
			date={2005},
			number={5-6},
			pages={442--449},
			issn={0015-8208},
		}

  \bib{dWS2}{article}{
   author={de Wit, B.},
   author={Samtleben, H.},
   title={The end of the $p$-form hierarchy},
   journal={J. High Energy Phys.},
   date={2008},
   number={8},
   pages={Paper No. 015, 17pp},
   issn={1126-6708},
}

		\bib{dWST}{article}{
			author={de Wit, B.},
			author={Samtleben, H.},
			author={Trigiante, M.},
			title={On Lagrangians and gaugings of maximal supergravities},
			journal={Nuclear Phys. B},
			volume={655},
			date={2003},
			number={1-2},
			pages={93--126},
			issn={0550-3213},
		}
		
		\bib{dWST2}{article}{
			author={de Wit, B.},
			author={Samtleben, H.},
			author={Trigiante, M.},
			title={The maximal $D=5$ supergravities},
			journal={Nuclear Phys. B},
			volume={716},
			date={2005},
			number={1-2},
			pages={215--247},
			issn={0550-3213},
		}
		
		\bib{dWST3}{article}{
			author={de Wit, B.},
			author={Samtleben, H.},
			author={Trigiante, M.},
			title={Magnetic charges in local field theory},
			journal={J. High Energy Phys.},
			date={2005},
			number={9},
			pages={Paper No. 016, 22pp},
			issn={1126-6708},
		}
		
		\bib{HS}{article}{
			author={Hohm, O.},
			author={Samtleben, H.},
			title={Leibniz--Chern--Simons Theory and Phases of Exceptional Field
				Theory},
			journal={Comm. Math. Phys.},
			volume={369},
			date={2019},
			number={3},
			pages={1055--1089},
		}
		
		\bib{Hong}{article}{
   author={Hong, W.},
   title={Atiyah classes of Lie bialgebras},
   journal={J. Lie Theory},
   volume={29},
   date={2019},
   number={1},
   pages={263--275},
}
		
		\bib{Kap}{article}{
			author={Kapranov, M.},
			title={Rozansky-Witten invariants via Atiyah classes},
			journal={Compositio Math.},
			volume={115},
			date={1999},
			number={1},
			pages={71--113},
		}
		
		\bib{KS}{book}{
			author={Kontsevich, M.},
			author={Soibelman, Y.},
			title={Deformation theory},
			note={Draft of a book available at homepage of Yan Soibelman},
			volume={I},
			date={2006},
		}

   \bib{KS1}{article}{
   author={Kotov, A.},
   author={Strobl, T.},
   title={The embedding tensor, Leibniz-Loday algebras, and their higher
   gauge theories},
   journal={Comm. Math. Phys.},
   volume={376},
   date={2020},
   number={1},
   pages={235--258},
   issn={0010-3616},
}

		\bib{LS}{article}{
			author={Lada, T.},
			author={Stasheff, J.},
			title={Introduction to SH Lie algebras for physicists},
			journal={Internat. J. Theoret. Phys.},
			volume={32},
			date={1993},
			number={7},
			pages={1087--1103},
		}
		
\bib{LS2021}{article}{
   author={Lada, T.},
   author={Stasheff, J.},
   title={The resurgence of $L_\infty$ structures in field theory},
   journal={Differential Geom. Appl.},
   volume={77},
   date={2021},
   pages={Paper No. 101755, 8},
   issn={0926-2245},
}

		\bib{LaurentSX-CR}{article}{
			author={Laurent-Gengoux, C.},
			author={Sti\'enon, M.},
			author={Xu, P.},
			title={Exponential map and {$L_\infty$} algebra associated to a
				{L}ie pair},
			journal={C. R. Math. Acad. Sci. Paris},
			volume={350},
			date={2012},
			number={17-18},
			pages={817--821},
			issn={1631-073X},
		}

		\bib{LaurentSX}{article}{
   author={Laurent-Gengoux, C.},
   author={Sti\'{e}non, M.},
   author={Xu, P.},
   title={Poincar\'{e}-Birkhoff-Witt isomorphisms and Kapranov dg-manifolds},
   journal={Adv. Math.},
   volume={387},
   date={2021},
   pages={Paper No. 107792, 62pp},
   issn={0001-8708},
}

		\bib{Lavau}{article}{
			author={Lavau, S.},
			title={Tensor hierarchies and Leibniz algebras},
			journal={J. Geom. Phys.},
			volume={144},
			date={2019},
			pages={147--189},
			issn={0393-0440},
		}

\bib{LP2020}{article}{
   author={Lavau, S.},
   author={Palmkvist, J.},
   title={Infinity-enhancing of Leibniz algebras},
   journal={Lett. Math. Phys.},
   volume={110},
   date={2020},
   number={11},
   pages={3121--3152},
   issn={0377-9017},
}
		
\bib{LS2023}{article}{
   author={Lavau, S.},
   author={Stasheff, J.},
   title={From Lie algebra crossed modules to tensor hierarchies},
   journal={J. Pure Appl. Algebra},
   volume={227},
   date={2023},
   number={6},
   pages={Paper No. 107311, 50},
   issn={0022-4049},
}

		\bib{Loday}{article}{
			author={Loday, J.-L.},
			title={Une version non commutative des alg\`ebres de Lie: les alg\`ebres de
				Leibniz},
			language={French},
			journal={Enseign. Math. (2)},
			volume={39},
			date={1993},
			number={3-4},
			pages={269--293},
		}

		\bib{LP}{article}{
			author={Loday, J.-L.},
			author={Pirashvili, T.},
			title={The tensor category of linear maps and Leibniz algebras},
			journal={Georgian Math. J.},
			volume={5},
			date={1998},
			number={3},
			pages={263--276},
			issn={1072-947X},
		}

		\bib{MSX}{article}{
			author={Mehta, R. A.},
			author={Sti{\'e}non, M.},
			author={Xu, P.},
			title={The Atiyah class of a dg-vector bundle},
			journal={C. R. Math. Acad. Sci. Paris},
			volume={353},
			date={2015},
			number={4},
			pages={357--362},
			issn={1631-073X},
}

		\bib{NS1}{article}{
			author={Nicolai, H.},
			author={Samtleben, H.},
			title={Maximal gauged supergravity in three dimensions},
			journal={Phys. Rev. Lett.},
			volume={86},
			date={2001},
			number={9},
			pages={1686--1689},
			issn={0031-9007},
		}
		
		\bib{NS2}{article}{
			author={Nicolai, H.},
			author={Samtleben, H.},
			title={Compact and noncompact gauged maximal supergravities in three
				dimensions},
			journal={J. High Energy Phys.},
			date={2001},
			number={4},
			pages={Paper No. 22, 38pp},
			issn={1126-6708},
		}
		
		\bib{Palm}{article}{
			author={Palmkvist, J.},
			title={The tensor hierarchy algebra},
			journal={J. Math. Phys.},
			volume={55},
			date={2014},
			number={1},
			pages={Paper No. 011701, 21pp},
		}
		
		\bib{PG}{article}{
			author={Pei, J.},
			author={Guo, L.},
			title={Averaging algebras, Schr\"{o}der numbers, rooted trees and operads},
			journal={J. Algebraic Combin.},
			volume={42},
			date={2015},
			number={1},
			pages={73--109},
			issn={0925-9899},
		}

\bib{SW}{article}{
   author={Strobl, T.},
   author={Wagemann, F.},
   title={Enhanced Leibniz algebras: structure theorem and induced Lie
   2-algebra},
   journal={Comm. Math. Phys.},
   volume={376},
   date={2020},
   number={1},
   pages={51--79},
   issn={0010-3616},
}

	\end{biblist}
\end{bibdiv}

\end{document}